%
%
%

\newcommand{\XX}{\mathbb{X}}

\newcommand{\Knull}[1]{\mathrm{K}_0(#1)}
\newcommand{\Knullprime}[1]{\mathrm{K}'_0(#1)}
\newcommand{\rKnull}[1]{\overline{\mathrm{K}}_0(#1)}
\newcommand{\coh}[1]{\mathrm{coh}\,#1}
\newcommand{\coker}[1]{\mathrm{cok}(#1)}
\newcommand{\cohnull}[1]{\mathrm{coh}_0#1}
\newcommand{\vect}[1]{\mathrm{vect}{#1}}
\newcommand{\up}[1]{\stackrel{#1}{\longrightarrow}}
\newcommand{\Pic}[1]{\mathrm{Pic}(#1)}
\newcommand{\Picnull}[1]{\mathrm{Pic}_0(#1)}
\newcommand{\Div}[1]{\mathrm{Div}(#1)}

\newcommand{\Hom}[2]{\mathrm{Hom}(#1,#2)}
\newcommand{\Ext}[3]{\mathrm{Ext}^{#1}({#2},{#3})}
\newcommand{\euform}[2]{\langle #1,#2\rangle}
\newcommand{\aveuform}[2]{\llangle #1,#2\rrangle}
\newcommand{\wt}[1]{\langle #1 \rangle}

\newcommand{\rk}[1]{\mathrm{rk}\,#1}
\newcommand{\dg}[1]{\mathrm{deg}\,#1}
\newcommand{\Aut}[1]{\mathrm{Aut}(#1)}

\newcommand{\ZZ}{\mathbb{Z}}
\newcommand{\CC}{\mathbb{C}}
\newcommand{\HH}{\mathbb{H}}

\newcommand{\PP}{\mathbf{P}}
\newcommand{\Cl}[1]{\mathcal{C}\ell(#1)}
\newcommand{\Cc}{\mathcal{C}}
\newcommand{\Hh}{\mathcal{H}}
\newcommand{\Hhnull}{\mathcal{H}_0}
\newcommand{\tHh}{\widetilde{\mathcal{H}}}
\newcommand{\Ll}{\mathcal{L}}
\newcommand{\LL}{\mathbb{L}}

\newcommand{\si}{\sigma}
\newcommand{\mmod}[1]{\mathrm{mod}(#1)}
\newcommand{\modgr}[2]{\mathrm{mod}^{#1}(#2)}

\newcommand{\dual}[1]{D(#1)}
\newcommand{\al}{\alpha}

\newcommand{\wtilde}{\widetilde}
\newcommand{\OX}{\mathcal{O}_X}
\newcommand{\OXX}{\mathcal{O}_\XX}
\newcommand{\rcl}[1]{\lsem #1\rsem}
\newcommand{\End}[2]{\mathrm{End}_{#1}(#2)}
\newcommand{\lra}{\longrightarrow}
\newcommand{\Ee}{\mathcal{E}}
\newcommand{\funct}[1]{k(#1)}
\newcommand{\divi}{\mathrm{div}}
\newcommand{\nat}{\mathrm{nat}}
\newcommand{\rperp}[1]{#1^\perp}
\newcommand{\tf}{\tilde{f}}
\newcommand{\vx}{\vec{x}}

\hyphenation{Gro-then-dieck}

\documentclass{amsart}

\usepackage{amssymb}


\usepackage[cmtip,all]{xy}

\usepackage{MnSymbol} 


\newtheorem{theorem}{Theorem}[section]
\newtheorem{lemma}[theorem]{Lemma}
\newtheorem{proposition}[theorem]{Proposition} 
\newtheorem{corollary}[theorem]{Corollary} 
\theoremstyle{definition}
\newtheorem{definition}[theorem]{Definition}
\newtheorem{example}[theorem]{Example}

\theoremstyle{remark}
\newtheorem{remark}[theorem]{Remark}

\numberwithin{equation}{section}

\begin{document}

\title[K-theory of weighted curves]{On the K-theory of weighted projective curves}


\author[H. Lenzing]{Helmut Lenzing}
\address{University of Paderborn\\
Institute for mathematics\\
D-33098 Paderborn, Germany}
\email{helmut@math.uni-paderborn.de}
\thanks{}

\subjclass[2000]{Primary: 18F30, 30F10, 14H55}

\date{}

\begin{abstract}
We present a largely self contained account on the K-theory of a weighted smooth projective curve over an algebraically closed field. In particular, we discuss the weighted version of divisor theory, Euler form, and Riemann-Roch theorem. This includes a treatment of the orbifold Euler characteristic.
\end{abstract}

\maketitle

\setcounter{tocdepth}{2}
\tableofcontents
\section*{Introduction}
Throughout we work over an algebraically closed base field $k$. This article deals with the K-theory of weighted smooth projective curves (weighted projective curves, for short). By definition, a weighted projective curve $\XX$ is a pair $(X,w)$, consisting of a smooth projective curve $X$ and a weight function $w$ which is an integral-valued positive function on $X$ such that $w(x)>1$ holds for only finitely many (closed) points $x_1,\ldots,x_t$ of $X$. We call the integers $p_i=w(x_i)$, $i=1,\ldots,t$, the \emph{weights} of $\XX$, and write $\XX=X\begin{pmatrix}
  p_1,p_2,\ldots,p_t \\
  x_1,x_2,\ldots,x_t
\end{pmatrix}$ or simply $\XX=X\wt{p_1,p_2,\ldots,p_t}$ in contexts where the position of the weights does not matter.

Weighted projective curves, in particular weighted projective lines \cite{Geigle:Lenzing:1986}, where $X$ is the projective line $\PP^1$, occur in many different incarnations: as compact Riemann surfaces, equipped with a weight function~\cite{Lenzing:2016b}, as finite group quotients of smooth projective curves (good characteristic assumed), as smooth projective curves equipped with a parabolic structure~\cite{Seshadri:1982}, as holomorphic orbifolds, as one dimensional Deligne-Mumford curves (stacks)~\cite{Behrend:Noohi:2006}, \cite{Abdelgadir:Ueda:2015} and, for the base field of complex numbers, as finite (orbifold) group quotient of a compact Riemann surface, see \cite{Lenzing:2016b}.

Instead of dealing with any of the particular incarnations, our focus lies on the category of coherent sheaves associated with a weighted projective curve, which forms a bridge between the different incarnations. By work of Reiten and van den Bergh~\cite{Reiten:VandenBergh:2002}, see also \cite{Lenzing:Reiten:2006}, \cite{VanRoosmalen:2016}, \cite{Lenzing:2007} the category of coherent sheaves on a weighted projective curve $\XX$ can be characterized to be a Hom-finite (over $k$) hereditary noetherian $k$-category $\Hh$ with Serre duality in the form $\Ext{1}{A}{B}=\dual{\Hom{B}{\tau A}}$, where $\tau$ is a self-equivalence of $\Hh$ and where the function field $K=k(\Hh)$ of $\Hh$ has infinite $k$-dimension. The category of coherent sheaves on $(X,w)$ can, for instance, be constructed by means of a sheaf of hereditary orders over the underlying smooth projective curve $X$~\cite{Reiten:VandenBergh:2002}, see also \cite{Iyama:Lerner:2014}. For the present article we apply instead  the $p$-cycle construction of \cite{Lenzing:1998} to the category of coherent sheaves $\coh{X}$ which makes the K-theoretic implications immediately transparent.

We finally note that for the case of weighted projective lines there exists already detailed accounts on their K-theory, see~\cite{Lenzing:1996}, \cite{Kussin:2000} which however do not cover divisor theory.

\section{Coherent sheaves on a smooth projective curve}

Let $X$ be a smooth projective curve over $k$. By $\Hh=\coh{X}$ we denote its category of coherent sheaves and by $\Hhnull=\cohnull{X}$ its full subcategory of sheaves of finite length. Let $\OX$ be a line bundle of $X$ that we call structure sheaf of $X$. The $k$-dimension of $\Ext{1}{\OX}{\OX}$ is called the \emph{genus} of $X$. An even more important invariant of $X$ is the function field $K=k(X)$ of $X$ given by the equivalence $\Hh/\Hhnull=\mmod{K}$,
where $\mmod{K}$ denote the category of finite dimensional $K$-vector spaces and where $\tHh=\Hh/\Hhnull$ is the Serre quotient~\cite{Gabriel:1967} of $\Hh$ modulo its Serre subcategory $\Hhnull$.
Each object in this quotient category has the form $\OX^n$ for some integer $n$; moreover, the endomorphism ring of $\OX$ in $\tHh$ is a commutative field $K$, which is a function field $K$ of one variable over $k$.

We next collect some fundamental facts about the category $\coh{X}$. By definition, a coherent sheaf $E$ is called a \emph{vector bundle} if $E$ does not have any simple subobject. By $\vect{X}$ we denote the full subcategory of $\coh{X}$ formed by vector bundles. A \emph{line bundle} is a vector bundle of rank one, and necessarily indecomposable.

\begin{theorem}\label{thm:CohX}
The category $\Hh=\coh{X}$ has the following properties.
\begin{enumerate}
  \item $\Hh$ is a noetherian abelian Hom-finite category having Serre duality in the form $\Ext{1}{A}{B}=\dual{\Hom{B}{\tau A}}$ for a self-equivalence $\tau$ of $\Hh$.
  \item $\Hh$ is also hereditary and has almost-split sequences with $\tau$ serving as the Auslander-Reiten translation.
  \item Each coherent sheaf $A$ is the direct sum $E\oplus U$ of a vector bundle $E$ and a coherent sheaf $U$ of finite length.
  \item The Auslander-Reiten translation $\tau$, when restricted to the full subcategory $\cohnull{X}$ of coherent sheaves of finite length, is isomorphic to the identity functor.
  \item Each indecomposable sheaf $U$ of finite length $\ell$ is uniserial, accordingly uniquely determined by its simple socle $S$ and its length. Each simple sheaf $S$ is concentrated in a uniquely determined point $x\in X$. Notation: $S_x$.
  \item Each vector bundle of rank $r$ has a finite filtration $E=E_r\supset E_{r-1}\supset \ldots \supset E_1\supset E_0=0$ with line bundle factors $E_i/E_{i-1}$.\qed
\end{enumerate}
\end{theorem}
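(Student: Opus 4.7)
The plan is to verify the six assertions in turn, combining standard algebraic geometry of smooth projective curves with some homological bookkeeping via Serre duality. For (1), noetherianness, abelianness, and Hom-finiteness are classical features of $\coh{X}$ for $X$ projective over the field $k$. For the displayed Serre duality, I invoke the classical Serre duality on a smooth projective curve: $\Ext{1}{A}{B}$ is naturally dual to $\Hom{B}{A\otimes\omega_X}$, with $\omega_X$ the dualising line bundle. Setting $\tau=-\otimes\omega_X$, an auto-equivalence of $\Hh$ since $\omega_X$ is invertible, yields the stated isomorphism. For (2), each stalk $\mathcal{O}_{X,x}$ is a DVR by smoothness, so has global dimension at most one, whence $\Hh$ is hereditary; given (1), the existence of almost-split sequences with $\tau$ as Auslander--Reiten translation now follows from the Reiten--Van den Bergh theorem for Hom-finite noetherian hereditary categories with Serre duality.

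For (3), I take the torsion subsheaf $U$ of $A$; since $X$ is smooth one-dimensional, the quotient $A/U$ is torsion-free, hence locally free, hence a vector bundle $E$. The canonical sequence $0\to U\to A\to E\to 0$ splits by Serre duality: $\Ext{1}{E}{U}\iso\dual{\Hom{U}{E\otimes\omega_X}}$, and the right-hand side vanishes because $U$ is torsion while $E\otimes\omega_X$ is a vector bundle. For (5), a finite-length sheaf on $X$ has finite support, decomposes as a direct sum indexed by its support points, and each summand corresponds to a finite-length module over the DVR $\mathcal{O}_{X,x}$; such modules are direct sums of uniserials $\mathcal{O}_{X,x}/\mathfrak{m}_x^n$, each determined by its simple socle and length. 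For (4), tensoring by $\omega_X$ preserves the point of support, so on a simple $S_x$ one has $\tau S_x\iso S_x$ via any local trivialisation of $\omega_X$ at $x$; simultaneously choosing such trivialisations at every closed point extends to a natural isomorphism between $\tau|_{\cohnull{X}}$ and the identity functor.

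For (6), I induct on the rank $r$ of $E$: the case $r=1$ is trivial, while for $r\ge 2$ any nonzero element of the one-dimensional generic fibre $E\otimes_{\OX}k(X)$ determines a rank-one subsheaf of $E$ whose saturation $L\subset E$ is a line bundle with locally free quotient $E/L$ of rank $r-1$, to which the induction hypothesis applies. I expect the subtlest point to be (4), where the object-wise isomorphism $\tau S_x\iso S_x$ must be upgraded to a natural isomorphism of functors on $\cohnull{X}$; this hinges on simultaneously fixing local generators of $\omega_X$ at every closed point, something unproblematic but easy to gloss over. The only other non-routine ingredient is the invocation of Reiten--Van den Bergh in (2); the remaining steps reduce cleanly either to the DVR structure of $\mathcal{O}_{X,x}$ or to dimension counts via Serre duality.
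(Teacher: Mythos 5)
Your proposal is correct, and it is essentially the route the paper intends: the paper gives no proof of Theorem~\ref{thm:CohX} at all, deferring to Hartshorne, to the hereditary-categories viewpoint of Lenzing's handbook article, and to Geigle--Lenzing; what you have written is a self-contained sketch of the classical sheaf-theoretic argument behind the first of those references (Serre duality with $\tau=-\otimes\omega_X$, the torsion/torsion-free splitting via $\Hom{U}{E\otimes\omega_X}=0$, the DVR structure of the stalks for uniseriality, and induction on rank via saturation of a rank-one subsheaf). Two small points are worth making explicit. First, hereditariness is not purely a stalk-level statement: that the stalks are DVRs gives $\mathcal{E}xt^q(A,B)=0$ for $q\geq 2$, but to conclude $\Ext{2}{A}{B}=0$ you also need the local-to-global Ext spectral sequence together with $H^p(X,-)=0$ for $p\geq 2$ (and the observation that $\mathcal{E}xt^1(A,B)$ has finite support, so its $H^1$ vanishes). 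Second, your care about upgrading the objectwise isomorphism $\tau S_x\iso S_x$ to a natural isomorphism of $\tau|_{\cohnull{X}}$ with the identity, by fixing local trivialisations of $\omega_X$ and using the canonical decomposition of a finite-length sheaf over its support, is exactly the point that is usually glossed over; your treatment of it is sound.
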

For the proof we refer to \cite{Hartshorne:1977} or, taking the view of hereditary categories, to \cite{Lenzing:2007}. Also the arguments of \cite{Geigle:Lenzing:1987} apply. Here, we just remark that the rank of a coherent sheaf $A$ may be defined as the length of $A$ in the Serre quotient $\coh{X}/\cohnull{X}$.

\subsection{The Euler form}
As for any Hom- and Ext-finite hereditary abelian category, the \emph{Euler form} for $\coh{X}$ is the bilinear form on the Grothendieck group $\Knull{X}=\Knull{\coh{X}}$, given on classes of objects by the expression
\begin{equation}\label{eqn:EulerForm}
  \euform{[A]}{[B]}=\dim_k\Hom{A}{B}-\dim_k\Ext1{A}{B},
\end{equation}
where often we simply write $\euform{A}{B}$.
Due to Serre duality we further have
\begin{equation}\label{eqn:antisymmetry}
  \euform{b}{a}=-\euform{a}{\tau b} \text{ for all members }a, b \text{ from }\Knull{X}.
\end{equation}
In particular, we have $\euform{a}{-}=0$ if and only if $\euform{-}{a}=0$.
\begin{lemma}
The kernel $N=\{a\in\Knull{X}\mid \euform{a}{-}=0\}$ of the Euler form (or of $\Knull{X}$) is generated by the differences $[S_x]-[S_y]$ ($x,y\in X$) of the classes of all pairs of simple sheaves.
\end{lemma}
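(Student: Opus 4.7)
The plan is first to show the easy inclusion that the subgroup $M=\langle[S_x]-[S_y]\mid x,y\in X\rangle$ lies in $N$, and then to establish the converse by showing that $K_0(X)/M$ is the free abelian group on $[\OX]$ and $[S_{x_0}]$ for any fixed $x_0\in X$.

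For the inclusion $M\subseteq N$ I would argue by direct computation. By Serre duality together with $\tau S_x\iso S_x$ (Theorem \ref{thm:CohX}(4)), one has $\Ext{1}{S_x}{L}=\dual{\Hom{L}{S_x}}$, whose dimension is $1$ for any line bundle $L$; similarly $\Ext{1}{S_x}{S_y}=\dual{\Hom{S_y}{S_x}}$ has the same dimension as $\Hom{S_x}{S_y}$. Thus $\euform{S_x}{L}=-1$ and $\euform{S_x}{S_y}=0$. By additivity of the Euler form over short exact sequences combined with the decomposition (3) and the vector-bundle filtration (6) of Theorem \ref{thm:CohX}, this gives $\euform{S_x}{A}=-\rk{A}$ independently of $x$, so $\euform{[S_x]-[S_y]}{A}=0$ for every coherent sheaf $A$, i.e.\ $M\subseteq N$.

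For the converse I would establish that $K_0(X)/M$ is generated by $[\OX]$ and $[S_{x_0}]$. Items (3) and (6) of Theorem \ref{thm:CohX} identify $\Knull{X}$ as the subgroup generated by line-bundle and simple-sheaf classes. Modulo $M$, every $[S_x]$ coincides with $[S_{x_0}]$, and the key reduction is to show that for each line bundle $L$ one has $[L]\equiv[\OX]+\dg{L}\cdot[S_{x_0}]\pmod M$. Pointwise twisting supplies this: the one-dimensional space $\Ext{1}{S_x}{L}=\dual{\Hom{L}{S_x}}$ yields a nonsplit extension $0\to L\to L'\to S_x\to 0$, whose middle term $L'$ is torsion-free of rank one (any torsion would split off as $S_x$), hence a line bundle, with $[L']=[L]+[S_x]$ in $\Knull{X}$. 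Granted that $\Pic X$ is generated by the classes of closed points, iterating this construction (upwards and downwards) links $\OX$ to any $L$ through such twists, producing the asserted congruence. Linear independence of $[\OX]$ and $[S_{x_0}]$ in the quotient follows from the rank homomorphism $\rk{}\colon \Knull{X}\to\ZZ$, which vanishes on $M$ and distinguishes them. Given $a\in N$, write $a\equiv r[\OX]+d[S_{x_0}]\pmod M$; since $M\subseteq N$, the right hand side also lies in $N$, and evaluating against $S_{x_0}$ gives $r=\euform{a}{S_{x_0}}=0$, whereupon evaluating against $\OX$ gives $-d=\euform{a}{\OX}=0$. Thus $a\in M$.

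The main obstacle is the statement that every line bundle is obtained from $\OX$ by point twists, equivalently that $\Pic X$ is generated by the classes of closed points. This is exactly the content of divisor theory for smooth projective curves, which the paper develops later in the weighted setting; classically it is deduced from Riemann--Roch by twisting $L$ sufficiently so that $L(D)$ has a nonzero section. The remaining steps, including the Euler-form calculations and the structural decomposition, are immediate from Theorem \ref{thm:CohX} and the antisymmetry \eqref{eqn:antisymmetry}.
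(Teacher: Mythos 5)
Your proof is correct and follows essentially the same route as the paper, which simply lists the three ingredients you use — the line-bundle filtration of vector bundles, Hom-Ext-orthogonality of simples at distinct points, and the fact that $[L]-[\OX]$ lies in the subgroup generated by simple classes — and declares the verification straightforward. The "main obstacle" you identify (that every line bundle is reached from $\OX$ by point twists) is exactly the paper's third listed property, which it likewise takes as known at this stage and only formally establishes later via the shift action of the divisor group.
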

\begin{proof}
Using the following properties
\begin{enumerate}
  \item The class of a vector bundle is a sum of classes of line bundles.
  \item Simple sheaves, concentrated in different points, are Hom-Ext-orthogonal.
  \item For each line bundle $L$ the difference $[L]-[\OX]$ belongs to the subgroup of $\Knull{X}$ generated by the classes of simple sheaves.
\end{enumerate}
the proof is straightforward.
\end{proof}
Clearly, the Euler form descends to the \emph{reduced} (or \emph{numerical}) Grothendieck group $\rKnull{X}=\Knull{X}/N$ and there yields a non-degenerate bilinear form. The class of a coherent sheaf $A$ in $\rKnull{X}$ we denote by double brackets $\rcl{A}$.
\begin{proposition}\label{prop:redGrothendieckGroup}
  The reduced Grothendieck group is the free abelian group on the class $\rcl{\OX}$ of the structure sheaf and the class $\rcl{S_x}$ of any simple sheaf. Moreover, in this basis the Euler form is given by the following \emph{Cartan matrix}
\begin{equation}\label{eqn:CartanMatrix}
  \begin{pmatrix}
  \euform{\OX}{\OX}&\euform{\OX}{S_x}\\
  \euform{S_x}{\OX}&\euform{S_x}{S_x}
  \end{pmatrix}=
  \begin{pmatrix}
  1-g& 1 \\
  -1 & 0
  \end{pmatrix},
\end{equation}
where $g$ is the genus of $X$.
 \end{proposition}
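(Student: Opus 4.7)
The plan is (i) to verify that $\rcl{\OX}$ and $\rcl{S_x}$ generate $\rKnull{X}$, (ii) to compute the four matrix entries in \eqref{eqn:CartanMatrix} directly, and (iii) to deduce freeness from the nondegeneracy of the Euler form on $\rKnull{X}$ together with the fact that the computed Cartan matrix has determinant $\pm1$.

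For generation I would invoke Theorem~\ref{thm:CohX}(3) to decompose an arbitrary coherent sheaf as $E\oplus U$ with $E$ a vector bundle and $U$ of finite length. The uniserial structure in Theorem~\ref{thm:CohX}(5) writes $[U]=\sum_y n_y[S_y]$ in $\Knull{X}$, and modulo $N$ each $[S_y]$ becomes $\rcl{S_x}$ by the previous lemma. For $E$, properties (1) and (3) used in the proof of that lemma together imply $[E]-\rk{E}\cdot[\OX]\in\sum_y\ZZ\cdot[S_y]$, so $\rcl{E}=\rk{E}\,\rcl{\OX}+n\rcl{S_x}$ for some $n\in\ZZ$. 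This produces a surjection $\varphi\colon\ZZ^2\twoheadrightarrow\rKnull{X}$ sending the canonical generators to $\rcl{\OX}$ and $\rcl{S_x}$.

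For the Cartan matrix, the entry $\euform{\OX}{\OX}=1-g$ is immediate from the definition of the genus combined with $\Hom{\OX}{\OX}=k$. For $\euform{\OX}{S_x}$, a nonzero morphism $\OX\to S_x$ is surjective and determined up to scalar by its value on the stalk at $x$, so $\dim\Hom{\OX}{S_x}=1$; and $\Ext{1}{\OX}{S_x}=\dual{\Hom{S_x}{\tau\OX}}=0$ because $\tau\OX$ is a line bundle and therefore has no simple subobject. The two remaining entries follow by combining \eqref{eqn:antisymmetry} with $\tau S_x=S_x$ from Theorem~\ref{thm:CohX}(4): $\euform{S_x}{\OX}=-\euform{\OX}{\tau S_x}=-1$, and $\euform{S_x}{S_x}=-\euform{S_x}{\tau S_x}=-\euform{S_x}{S_x}$, which forces $\euform{S_x}{S_x}=0$.

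Finally, the displayed matrix has determinant $1$, so the pulled-back form $\varphi^{\ast}\euform{-}{-}$ on $\ZZ^2$ is nondegenerate. Since the Euler form on $\rKnull{X}$ is itself nondegenerate by the very definition of $N$, any $v\in\ker\varphi$ is orthogonal to all of $\ZZ^2$ under $\varphi^{\ast}\euform{-}{-}$, and hence vanishes. Thus $\varphi$ is an isomorphism. The only point deserving particular care is the seemingly innocuous claim that $\tau\OX$ is a line bundle: one appeals to the fact that the self-equivalence $\tau$ preserves the Serre subcategory $\cohnull{X}$ (by Theorem~\ref{thm:CohX}(4)) and therefore also the complementary subcategory $\vect{X}$ of objects without simple subobject, along with the rank function.
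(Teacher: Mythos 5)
Your proposal is correct and follows essentially the route the paper intends: the paper states this proposition without a written proof, leaving it to follow from the preceding lemma on the kernel $N$ (and the three properties listed in its proof), from Theorem~\ref{thm:CohX}, and from Serre duality together with $\tau S_x\iso S_x$ — exactly the ingredients you assemble. Your explicit verification of the four Cartan entries and the unimodularity argument for freeness are the details the paper omits, and they are sound, including the point that $\tau$ preserves $\vect{X}$ and the rank so that $\Ext{1}{\OX}{S_x}=\dual{\Hom{S_x}{\tau\OX}}=0$.
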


If $C$ denotes the Cartan matrix, the  matrix $\Phi=-C^{-1}C^{tr}$ is called \emph{Coxeter matrix}.
\begin{corollary}
The Auslander Reiten translation $\tau$ induces on the reduced Grothendieck group the Coxeter transformation, also denoted $\tau$, given by the Coxeter matrix
  \begin{equation}\label{eqn:CoxeterMatrix}
    \begin{pmatrix}
      1 & 0 \\
      2(g-1) & 1
    \end{pmatrix}
  \end{equation}
whose characteristic polynomial, the \emph{Coxeter polynomial} of $X$, equals $(x-1)^2$.
  In particular, $\rcl{\tau\OX}=\rcl{\OX}+2(g-1)\rcl{S_x}$ where $x$ is any point of $X$.
\end{corollary}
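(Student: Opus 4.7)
The plan is to show that the abstract matrix identity defining the Coxeter matrix coincides with the matrix representing $\tau$ on $\rKnull{X}$ in the chosen basis, and then to read off all remaining information from that $2\times 2$ matrix.

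First I would translate the Serre duality identity $\euform{b}{a}=-\euform{a}{\tau b}$ from \eqref{eqn:antisymmetry} into a matrix relation. Writing $u,v$ for coordinate column vectors of $a,b$ with respect to the basis $(\rcl{\OX},\rcl{S_x})$, Proposition \ref{prop:redGrothendieckGroup} identifies the Euler form with $\euform{a}{b}=u^{tr}Cv$, where $C$ is the Cartan matrix in \eqref{eqn:CartanMatrix}. Denoting by $T$ the matrix of $\tau$ in the same basis, the Serre duality relation becomes $v^{tr}Cu=-u^{tr}CTv$ for all $u,v$; since $v^{tr}Cu=u^{tr}C^{tr}v$, this forces $C^{tr}=-CT$. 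A direct computation gives $\det C=1$, so $C$ is invertible over $\ZZ$ and
\[ T=-C^{-1}C^{tr}=\Phi, \]
which is exactly the definition of the Coxeter matrix.

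Next I would carry out the straightforward $2\times 2$ matrix computation: with $C^{-1}=\begin{pmatrix}0&-1\\1&1-g\end{pmatrix}$ and $C^{tr}=\begin{pmatrix}1-g&-1\\1&0\end{pmatrix}$, multiplying and negating yields the matrix displayed in \eqref{eqn:CoxeterMatrix}. Because the resulting matrix is lower triangular with both diagonal entries equal to $1$, its characteristic polynomial is immediately $(x-1)^2$.

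Finally, to obtain $\rcl{\tau\OX}$, I would apply $\Phi$ to the coordinate vector $(1,0)^{tr}$ of $\rcl{\OX}$, producing $(1,2(g-1))^{tr}$, i.e.\ $\rcl{\tau\OX}=\rcl{\OX}+2(g-1)\rcl{S_x}$. Since any two simple sheaves $S_x$ and $S_y$ have the same class in $\rKnull{X}$ (they differ by an element of $N$, see the lemma preceding Proposition~\ref{prop:redGrothendieckGroup}), the point $x$ in the formula may be chosen arbitrarily. There is no real obstacle here; the only subtlety worth double-checking is the sign convention in $\Phi=-C^{-1}C^{tr}$, which is fixed precisely by the Serre duality identity and is the step where all signs of the problem are locked in.
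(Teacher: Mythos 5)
Your proposal is correct and is essentially the argument the paper intends: the paper defines $\Phi=-C^{-1}C^{tr}$ immediately before the corollary and leaves the verification implicit, and your derivation of $C^{tr}=-CT$ from the Serre duality identity \eqref{eqn:antisymmetry}, followed by the explicit $2\times 2$ computation and the evaluation at $\rcl{\OX}$, is exactly that verification. The sign convention and the matrix arithmetic both check out.
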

We note in this context that $2(1-g)$ is the \emph{Euler characteristic} of $X$.

By Proposition~\ref{prop:redGrothendieckGroup} we obtain two ``obvious'' linear forms on the Grothendieck group, \emph{rank} and \emph{degree} given by the expressions
\begin{equation}\label{eqn:RankAndDegree}
\rk=\euform{-}{S_x} \textrm{ and } \dg=\euform{\OX}{-}-\euform{\OX}{\OX}\rk,
\end{equation}
respectively. We note first that the above K-theoretic definition of rank agrees for coherent sheaves with the sheaf theoretic one, given by the length of $X$ in the Serre quotient $\coh{X}/\cohnull{X}$: Clearly, the sheaf theoretic definition induces a linear form on $\rKnull{X}$ which agrees with the K-theoretic one on the basis $\rcl{\OX}$, $\rcl{S_x}$ of $\rKnull{X}$.

Rank and degree have largely complementary properties:
\begin{proposition}
\begin{enumerate}
  \item For each coherent sheaf $A$ we have $\rk{A}\geq0$; moreover, $\rk{A}=0$ if and only if $A$ has finite length.
  \item We have $\dg{\OX}=0$. Moreover, for each coherent sheaf $A$ of finite length $\dg{A}$ equals the length of $A$. Hence for members of $\cohnull{X}$ we have $\dg{A}\geq0$ and $\dg{A}=0$ if and only if $A=0$.
\end{enumerate}
\end{proposition}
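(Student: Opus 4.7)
Both parts of the proposition concern the linear forms $\rk$ and $\dg$ on the Grothendieck group, and my strategy is uniform: since the Euler form descends to $\rKnull{X}$, so do $\rk$ and $\dg$, and it suffices to evaluate them on the basis $\rcl{\OX}, \rcl{S_x}$ of $\rKnull{X}$ using the Cartan matrix (\ref{eqn:CartanMatrix}), then extend to arbitrary coherent sheaves via additivity on short exact sequences combined with the structural results of Theorem~\ref{thm:CohX}.

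For part (1), I will first read off from (\ref{eqn:CartanMatrix}) that $\rk{\OX}=\euform{\OX}{S_x}=1$ and $\rk{S_x}=\euform{S_x}{S_x}=0$. Since all simple sheaves represent the same class in $\rKnull{X}$ by the preceding lemma, every simple sheaf has rank zero, whence by the uniserial description in Theorem~\ref{thm:CohX}(5) and additivity along composition series, so does every sheaf of finite length. Using the decomposition $A=E\oplus U$ from Theorem~\ref{thm:CohX}(3) together with the line bundle filtration of Theorem~\ref{thm:CohX}(6) (where each line bundle $L$ has $\rk{L}=1$, since $[L]-[\OX]$ lies in the kernel $N$ by item (3) of the lemma), the rank of $A$ equals the classical sheaf-theoretic rank of $E$; hence $\rk{A}\geq 0$, with equality exactly when $E=0$, that is, when $A$ has finite length.

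For part (2), the identity $\dg{\OX}=0$ is immediate from the definition (\ref{eqn:RankAndDegree}) together with $\rk{\OX}=1$. For $A\in\cohnull{X}$, part (1) gives $\rk{A}=0$, so $\dg{A}=\euform{\OX}{A}$; choosing a composition series of $A$ with simple subquotients $S_{x_1},\ldots,S_{x_\ell}$ and applying additivity of the Euler form yields $\dg{A}=\sum_{i=1}^{\ell}\euform{\OX}{S_{x_i}}=\ell$, using that $\euform{\OX}{S_{x_i}}=1$ for every $x_i$ (again by the Cartan matrix and the fact that all simple classes coincide in $\rKnull{X}$). This is the length of $A$, and the non-negativity and vanishing assertions follow at once. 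No step is really an obstacle; the only care needed is to make sure that $\rk$ and $\dg$ are well-defined on $\Knull{X}$ and descend to $\rKnull{X}$, which is automatic from the corresponding property of the Euler form.
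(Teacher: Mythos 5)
Your argument is correct and follows essentially the same route as the paper: for (1) the paper simply invokes the already-noted identification of the K-theoretic rank with the length in the Serre quotient $\coh{X}/\cohnull{X}$ (which you re-derive via the decomposition $A=E\oplus U$ and the line-bundle filtration), and for (2) it uses $\Hom{\OX}{S_x}=k$ and $\Ext{1}{\OX}{S_x}=0$, i.e.\ exactly the Cartan-matrix entry $\euform{\OX}{S_x}=1$, together with additivity along a composition series. One small inaccuracy: $[L]-[\OX]$ does \emph{not} lie in the kernel $N$ in general (e.g.\ $[\OX(x)]-[\OX]=[S_x]$ has degree one); item (3) of the lemma only places it in the subgroup generated by the classes of simple sheaves, which nevertheless suffices for your conclusion $\rk{L}=1$ since $\rk{}=\euform{-}{S_x}$ vanishes on every simple class.
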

\begin{proof}
  The first assertion follows from the fact that the rank of a coherent sheaf $X$ equals its length in the Serre quotient $\Hh/\Hhnull$. For the second assertion we use that $\Hom{\OX}{S_x}=k$ and, due to Serre duality, $\Ext{1}{\OX}{S_x}=0$ implying  $\euform{\OX}{S_x}=1$ for each simple sheaf $S_x$.
\end{proof}
\begin{proposition}[Riemann-Roch]
  For any two coherent sheaves $A, B$ on $X$ we have
  \begin{equation}\label{eqn:RiemannRoch}
    \euform{A}{B}=(1-g)\cdot\rk{A}\cdot\rk{B}+\begin{vmatrix}
                                      \rk{A} & \rk{B} \\
                                      \dg{A} & \dg{B}
                                    \end{vmatrix}.
  \end{equation}
\end{proposition}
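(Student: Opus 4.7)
The plan is to observe that both sides of \eqref{eqn:RiemannRoch} are $\ZZ$-bilinear forms on $\Knull{X}$ which descend to the reduced Grothendieck group $\rKnull{X}$. The left hand side descends by definition of $\rKnull{X}$, and the right hand side descends because the linear forms $\rk$ and $\dg$ are themselves defined in \eqref{eqn:RankAndDegree} through the Euler form, hence vanish on $N$. Thus it suffices to verify the identity on any pair of generators of the free abelian group $\rKnull{X}$, which by Proposition~\ref{prop:redGrothendieckGroup} has basis $\rcl{\OX}$ and $\rcl{S_x}$ for an arbitrary closed point $x\in X$.

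I would then compute the rank and degree of the two basis elements: from \eqref{eqn:RankAndDegree} together with the Cartan matrix \eqref{eqn:CartanMatrix}, one finds $\rk{\OX}=\euform{\OX}{S_x}=1$, $\dg{\OX}=\euform{\OX}{\OX}-\euform{\OX}{\OX}\cdot 1=0$, $\rk{S_x}=\euform{S_x}{S_x}=0$, and $\dg{S_x}=\euform{\OX}{S_x}-\euform{\OX}{\OX}\cdot 0=1$. Plugging each of the four pairs $(A,B)\in\{\OX,S_x\}^{2}$ into the right hand side of \eqref{eqn:RiemannRoch} and comparing with the entries of the Cartan matrix gives, respectively, $(1-g)+0=1-g$, $0+1=1$, $0-1=-1$, and $0+0=0$. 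These match $\euform{\OX}{\OX}$, $\euform{\OX}{S_x}$, $\euform{S_x}{\OX}$, and $\euform{S_x}{S_x}$, so the two bilinear forms agree on the basis.

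Since rank and degree only depend on the numerical class, and since the Euler form is non-degenerate on $\rKnull{X}$, this coincidence on a basis extends by bilinearity to all of $\rKnull{X}\times\rKnull{X}$ and hence to all pairs of coherent sheaves. There is no real obstacle here: the one subtlety worth flagging is making explicit that both sides factor through $\rKnull{X}$ so that a basis check is legitimate; once that is recorded, the proof reduces to the four $2\times 2$ evaluations above.
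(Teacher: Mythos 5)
Your proposal is correct and follows exactly the paper's argument: the paper also reduces to checking the identity on the basis $\rcl{\OX}$, $\rcl{S_x}$ of $\rKnull{X}$, merely declaring the verification ``obvious'' where you write out the four evaluations explicitly. The only addition you make is to record why both sides factor through $\rKnull{X}$, which is a reasonable bit of bookkeeping but not a different method.
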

\begin{proof}
  It suffices to check the formula on the basis $\rcl{\OX}$, $\rcl{S_x}$ of the reduced Grothendieck group $\rKnull{X}$, where its validity is obvious.
\end{proof}
We recall that in a hereditary abelian $k$-category $\Hh$ an object $E$ is called \emph{exceptional} if $\End{}{E}=k$ and $\Ext{1}{E}{E}=0$.
\begin{corollary}
The category $\coh{X}$ on a smooth projective curve $X$ has an exceptional object only for genus zero, in which case exactly the line bundles are exceptional.
\end{corollary}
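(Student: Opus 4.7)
The plan is to exploit the Riemann--Roch formula just proved by specialising it to $A=B=E$, and then match the resulting numerical identity against the defining Euler-form value of an exceptional object.

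First, I would observe that exceptionality immediately yields $\dim_k\End{}{E}=1$ and $\dim_k\Ext{1}{E}{E}=0$, so $\euform{E}{E}=1$. Substituting $A=B=E$ into the Riemann--Roch formula~\eqref{eqn:RiemannRoch} makes the determinant collapse (identical columns), leaving
\[
  \euform{E}{E}=(1-g)\cdot(\rk{E})^2.
\]
The Diophantine equation $(1-g)\cdot(\rk{E})^2=1$ over $g,\rk{E}\in\NN\cup\{0\}$ has the unique solution $g=0$, $\rk{E}=1$: the left side vanishes when $\rk{E}=0$, is nonpositive when $g\geq 1$, and is at least $4$ when $g=0$ and $\rk{E}\geq 2$. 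In particular, no exceptional objects exist in positive genus, and in genus zero every exceptional object must have rank one.

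Second, to recognise such an $E$ as a line bundle, I would invoke that $\End{}{E}=k$ forces $E$ to be indecomposable (the endomorphism ring contains no nontrivial idempotents), and then appeal to Theorem~\ref{thm:CohX}(3) to decompose $E=F\oplus U$ with $F$ a vector bundle and $U$ of finite length. Indecomposability forces one summand to vanish; since finite-length sheaves have rank zero while $\rk{E}=1$, necessarily $U=0$ and $E=F$ is a vector bundle of rank one, that is, a line bundle.

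For the converse, I would check that every line bundle $L$ on a connected genus-zero curve $X$ satisfies $\End{}{L}\iso H^0(X,\OX)=k$, so $\dim_k\End{}{L}=1$; combined with $\euform{L}{L}=(1-g)\cdot 1 = 1$ from the displayed formula, this forces $\Ext{1}{L}{L}=0$, confirming that $L$ is exceptional. I do not expect a substantive obstacle here; the only slightly delicate point is to dispose of the rank-zero case explicitly, but there $\euform{E}{E}=0$ holds automatically regardless of $g$, ruling out exceptionality for any finite-length sheaf.
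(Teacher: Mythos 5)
Your proposal is correct and follows essentially the same route as the paper: specialize the Riemann--Roch formula to $A=B=E$ so that $1=\euform{E}{E}=(1-g)(\rk{E})^2$, forcing $g=0$ and $\rk{E}=1$ (the paper's printed ``$g=1$'' is a typo for $g=0$). You merely supply two details the paper leaves implicit --- that a rank-one exceptional object is genuinely a line bundle, and the verification of the converse --- both of which are sound.
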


\begin{proof}
  Assume $E$ is exceptional in $\coh{X}$, then by the  Riemann-Roch formula we obtain $1=\euform{E}{E}=(1-g)(\rk{E})^2$. This implies $g=1$ and $\rk{E}=1$. Conversely, for genus zero, the structure sheaf, hence each line bundle, is exceptional.
\end{proof}

\begin{proposition} \label{prop:LineBundleClass}
A line bundle $L$ is determined up to isomorphism by its class $[L]$ in $\Knull{X}$.
\end{proposition}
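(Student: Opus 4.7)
The plan is to construct a ``determinant'' homomorphism $\det\colon \Knull{X}\to\Pic{X}$ that retracts the class map $L\mapsto[L]$ from $\Pic{X}$ into $\Knull{X}$. Once this retraction is available, injectivity of the class map, and hence the proposition, is immediate: if $[L]=[L']$ in $\Knull{X}$, then $L=\det[L]=\det[L']=L'$.

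First I would define a candidate $\det$ on coherent sheaves. For a vector bundle $E$ of rank $r$, Theorem~\ref{thm:CohX}(6) provides a filtration $0=E_0\subset E_1\subset\cdots\subset E_r=E$ with line bundle quotients $M_i=E_i/E_{i-1}$, and I set $\det E=M_1\otimes\cdots\otimes M_r$; equivalently, $\det E=\bigwedge^r E$, which is intrinsically a line bundle. For a finite length sheaf $U$, Theorem~\ref{thm:CohX}(5) attaches to each indecomposable summand a uniquely determined simple socle $S_x$ and a length, yielding a well-defined support divisor $D(U)=\sum_{x\in X}\ell_x(U)\,x$, and I set $\det U=\OX(D(U))$. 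For an arbitrary coherent sheaf $A=E\oplus U$ decomposed as in Theorem~\ref{thm:CohX}(3), put $\det A=\det E\otimes\det U$.

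Next I would verify additivity: for every short exact sequence $0\to A\to B\to C\to 0$ in $\coh X$, the isomorphism $\det B\iso\det A\otimes\det C$ holds. The pure vector bundle case reduces via the line bundle filtrations of Theorem~\ref{thm:CohX}(6) to the rank-one case, where it is tautological. The pure torsion case follows from the additivity of length at each point of $X$. The mixed case is handled by isolating, for each point $x\in X$, the maximal subsheaf of $B$ supported at $x$ and using the splitting from Theorem~\ref{thm:CohX}(3), (5) to reduce to the two pure cases. Once additivity holds, $\det$ factors through $\Knull{X}$; since $\det[L]=L$ for every line bundle $L$, the class map is split injective and the proposition follows.

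The main obstacle is the additivity check in the mixed case, where neither the subobject nor the middle term is pure: one must show that the torsion and torsion-free parts can be separated coherently across the sequence without picking up correction terms. This is routine given the structure theorem~\ref{thm:CohX}(3),(5),(6), but it carries the bulk of the argument.
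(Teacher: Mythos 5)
Your proposal is correct and is essentially the paper's own argument: the paper likewise proves the proposition by exhibiting the determinant homomorphism $\det\colon\Knull{\coh{X}}\to\Pic{X}$, $[E]\mapsto\bigwedge^{r}E$ (citing Borel--Serre for its existence), which retracts the class map on line bundles. You simply spell out the additivity verification that the paper delegates to the reference.
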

\begin{proof}
The proof is classical, and contained in the paper \cite{Borel:Serre:1958} that marks the beginning of algebraic K-theory: Let $\Pic{X}$ be the group of isomorphism classes of line bundles with respect to the tensor product of coherent sheaves on $X$. Following~\cite{Borel:Serre:1958}, mapping the class $[E]$ of a vector bundle $E$ of rank $r$ to the line bundle $\bigwedge^rE$, induces the \emph{determinant homomorphism}  $\det:\Knull{\coh{X}}\to \Pic{X}$, sending the class $[L]$ of a line bundle $L$ to the isomorphism class of $L$, thus  implying the claim.
\end{proof}

By Serre duality we have $\Ext{1}{S_x}{\OX}=k$ for each $x$ in $X$. There results a unique non-split exact sequence $0\to \OX \up{\al_x} \OX(x)\to S_x\to 0$, where $\OX(x)$ is a line bundle.
\begin{theorem}
  For a smooth projective curve we have the following alternative:
  \begin{enumerate}
    \item For genus zero, all simple sheaves have the same class in $\Knull{X}$. Further, the space $\Hom{\OX}{\OX(x)}$ has dimension two for each $x\in X$.
    \item For genus greater zero, non-isomorphic simple sheaves have different classes in $\Knull{X}$. Further, for each $x$ in $X$, the space $\Hom{\OX}{\OX(x)}$ has dimension one.
  \end{enumerate}
\end{theorem}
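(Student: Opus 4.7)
My plan is to first determine $\dim_k\Hom{\OX}{\OX(x)}$ by applying $\Hom{\OX}{-}$ to the defining extension $0 \to \OX \to \OX(x) \to S_x \to 0$; the statement about classes of simples in $\Knull{X}$ will follow formally. Using $\Hom{\OX}{S_x}=k$ and the vanishing $\Ext{1}{\OX}{S_x}=0$ (by Serre duality, since the line bundle $\tau\OX$ admits no simple subobject), together with $\dim_k\Ext{1}{\OX}{\OX}=g$, the six-term sequence collapses to
\[
0 \to k \to \Hom{\OX}{\OX(x)} \to k \up{\delta} k^g \to \Ext{1}{\OX}{\OX(x)} \to 0,
\]
so $\delta$ is either zero or injective, and the question is which.

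For genus zero the target of $\delta$ vanishes, so $\dim_k\Hom{\OX}{\OX(x)}=2$ immediately. To see that all simple sheaves have the same class in $\Knull{X}$, I would compute $\euform{\OX(x)}{\OX(y)}=1-g=1$ via Riemann-Roch \eqref{eqn:RiemannRoch}, and then argue $\Ext{1}{\OX(x)}{\OX(y)}=0$ by Serre duality: via \eqref{eqn:CoxeterMatrix} the line bundle $\tau\OX(x)$ has degree $2g-1=-1$, and no nonzero map can go from a line bundle of degree $1$ to one of degree $-1$. Hence $\Hom{\OX(x)}{\OX(y)}$ is one-dimensional, and any nonzero map between line bundles of equal degree is an isomorphism, so $\OX(x)\iso\OX(y)$ and $[S_x]=[\OX(x)]-[\OX]=[\OX(y)]-[\OX]=[S_y]$.

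For positive genus I want $\delta$ injective, i.e.\ $\dim_k\Hom{\OX}{\OX(x)}=1$. The essential input is the classical fact that a line bundle of degree one on a smooth projective curve of positive genus admits at most one global section: two linearly independent sections of $\OX(x)$ would provide a non-constant morphism $X\to\PP^1$ of degree $\dg{\OX(x)}=1$, forcing $X\iso\PP^1$ and contradicting $g\geq 1$. Given this dimension bound, the $\Knull{X}$-statement follows formally: if $[S_x]=[S_y]$, then $[\OX(x)]=[\OX(y)]$, so Proposition~\ref{prop:LineBundleClass} yields an isomorphism $\phi:\OX(y)\iso\OX(x)$; composing with the canonical inclusion gives a section $\OX\hookrightarrow\OX(y)\up{\phi}\OX(x)$ of $\OX(x)$ with cokernel $S_y$. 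Since $\Hom{\OX}{\OX(x)}$ is one-dimensional, this section must be a scalar multiple of the canonical inclusion, whose cokernel is $S_x$; therefore $S_x\iso S_y$ and $x=y$.

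The main obstacle is the positive-genus bound $\dim_k\Hom{\OX}{\OX(x)}\leq 1$. For $g=1$ one can obtain it purely categorically, exactly as in the genus-zero argument: $\tau\OX$ has degree $2g-2=0$, so $\Hom{\OX(x)}{\tau\OX}=0$ and Serre duality gives $\Ext{1}{\OX}{\OX(x)}=0$. For $g\geq 2$, however, Riemann-Roch leaves both dimensions $1$ and $2$ compatible with Serre duality, and one has to appeal to the non-rationality of $X$ through the degree-one morphism to $\PP^1$.
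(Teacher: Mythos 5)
Your argument is correct, but for the crucial case $g\geq 2$ it takes a genuinely different route from the paper. Your genus-zero and genus-one cases are handled by the same Euler-form/Serre-duality/degree bookkeeping the paper uses elsewhere, and your logical order is the reverse of the paper's: you establish $\dim_k\Hom{\OX}{\OX(x)}$ first and then deduce the statement about classes formally via Proposition~\ref{prop:LineBundleClass}. For $g\geq 2$ you import the classical geometric fact that a degree-one line bundle on a non-rational curve has at most a one-dimensional space of sections (no degree-one morphism $X\to\PP^1$); this is correct, though you should either note that the pencil $|x|$ is base-point free (a base point would force $h^0(\OX)\geq 2$) or phrase it function-theoretically ($v/u$ is a rational function whose pole divisor has degree one, so $k(X)=k(v/u)$ is rational). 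The paper instead argues by contradiction entirely inside the categorical framework: assuming two non-isomorphic simples share a class forces $\dim_k\Hom{\OX}{\OX(x)}=2$, hence a $\PP^1$-family of pairwise non-isomorphic simple quotients $S_u$ of $\OX(x)$; for $g\geq 1$ Serre duality makes the inclusion $j:\OX\hookrightarrow\tau\OX$ factor through every $u$, so every $S_u$ embeds into the fixed finite-length sheaf $\coker{j}$ --- impossible. What each approach buys: the paper's stays within the axioms of a hereditary noetherian category with Serre duality (in keeping with its self-contained, incarnation-independent ambitions), while yours is shorter, yields the dimension statements directly rather than extracting them from a contradiction, but leans on standard curve theory external to the paper's toolkit.
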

\begin{proof}
It suffices to deal with the second assertion. Assume that the simple sheaves $S_x$ and $S_y$ are non-isomorphic but have the same class in $\Knull{X}$. It follows that the line bundles $\OX(x)$ and $\OX(y)$ have the same class in $\Knull{X}$ and then are isomorphic by Proposition~\ref{prop:LineBundleClass}. We thus assume $\OX(x)=\OX(y)$. Since $u=\al_x$ and $v=\al_y$ have non-isomorphic cokernels $S_x$ and $S_y$, the morphisms $u$ and $v$ are linearly independent over $k$. Hence the space $\Hom{\OX}{\OX(x(x)}$ has $k$-dimension at least two. From the exactness of the sequence $0\to \Hom{\OX}{\OX}\to \Hom{\OX}{\OX(x)}\to \Hom{\OX}{S_x}$ we infer that $\Hom{\OX}{\OX(x)}$ has exactly dimension two. Therefore
\begin{equation*}
  2-g = \euform{\OX}{\OX}+\euform{\OX}{S_x}=\euform{\OX}{\OX(x)},
\end{equation*}
implying that $\Ext{1}{\OX}{\OX(x)}$ has dimension $g$, the dimension of $\Ext{1}{\OX}{\OX}$. Invoking Serre duality it follows that $\Hom{\OX}{\tau\OX}$ and $\Hom{\OX(x)}{\tau\OX}$  have the same dimension $g$.

For each non-zero morphism $u$ from $\Hom{\OX}{\OX(x)}$ we obtain a non-split exact sequence
\begin{equation}\label{eqn;SimpleCoker}
  0\to \OX\stackrel{u}{\lra}\OX(x)\to S_u\to 0,
\end{equation}
where $S_u$ has finite length and degree one and hence is a simple sheaf. Using that $\Ext{1}{S_u}{\OX}$ has dimension one, it follows that two non-zero members $u$ and $v$ of $\Hom{\OX}{\OX(x)}$ have isomorphic cokernels if and only if $ku=kv$. Since $\Hom{\OX}{\OX(x)}$ has dimension two we obtain infinitely many pairwise non-isomorphic simple sheaves of the form $S_u$, $0\neq u\in\Hom{\OX}{\OX(x)}$.

Assume now that $g\geq1$ and hence $\Hom{\OX}{\tau\OX}$ is non-zero. Fixing an embedding $j:\OX\hookrightarrow\tau\OX$ and applying $\Hom{\OX}{-}$ to the sequence \eqref{eqn;SimpleCoker} we see that $u$ induces an epimorphism $\Ext{1}{\OX}{\OX}\to\Ext{1}{\OX}{\OX(x)}$ which is an isomorphism by equality of dimensions. By Serre duality this yields a bijection $\Hom{\OX(x)}{\tau \OX}\to \Hom{\OX}{\tau\OX}, h\mapsto h\circ u$. In particular, the inclusion $j:\OX\to \tau\OX$ factors into a product $j=h\circ u$ of two monomorphisms, implying that each simple module of the form $S_u$, $0\neq u\in\Hom{\OX}{\OX(x)}$, embeds into the cokernel $C$ of $j$. Since $C$ has finite length and there are infinitely many non-isomorphic simples of the form $S_u$, this is impossible, thus contradicting our assumption $g>0$.
\end{proof}

\subsection{Shift action associated to a point} \label{sect:ShiftAction}
For $x\in X$ let $S_x$ be the simple sheaf concentrated in $x$. For each vector bundle $E$ on $X$ we consider the $S_x$-universal extension
\begin{equation}\label{eqn:UniversalExtension}
  0\to E\up{\alpha_E} E(x) \to \Ext{1}{S_x}{E}\otimes_k S_x \to 0
\end{equation}
which is uniquely defined up to isomorphism. This defines a self-equivalence $\si_x:\vect{X}\to\vect{X}$ which extends to a self-equivalence of $\coh{X}$, also denoted $\si_x$ which, restricted to $\cohnull{X}$, is (isomorphic to) the identity on $\cohnull{X}$. For details we refer to \cite[Section 10.3]{Lenzing:2007}, see also \cite[Section 3]{Lenzing:Pena:1999}.

\begin{proposition}\label{prop:DivisorGroup}
The \emph{divisor group} $\Div{X}$, defined as the free abelian group on $X$, acts as group of automorphisms on the category $\coh{X}$ by $(\sum_{x\in X}{n_x}x)(E)=\left(\prod_{x\in X}{\si_x^{d_x}}\right)(E)=:E(D)$ if $D=\sum_{x\in X}d_x x$. Moreover the following holds
\begin{enumerate}
  \item For each line bundle $L$ and coherent sheaf of finite length $U$ there is --- up to isomorphism --- a unique line bundle $L(U)$ such that $[L(U)]=[L]+[U]$ holds in $\Knull{X}$.
  \item The action of the divisor group $\Div{X}$ on the category $\Ll(X)$ of line bundles is transitive, up to isomorphism.
\end{enumerate}
\end{proposition}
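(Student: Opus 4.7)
The plan is to handle the three assertions in sequence: first that the $\si_x$ really define an action of the free abelian group $\Div{X}$, then existence/uniqueness of $L(U)$ in part (1), and finally transitivity in part (2).

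For the action, I would check that the shift functors at distinct points commute up to canonical isomorphism. By Theorem~\ref{thm:CohX}(5) simple sheaves concentrated at different points $x\neq y$ are Hom-orthogonal, and by Serre duality (together with $\tau|_{\cohnull{X}}=\mathrm{id}$) they are also Ext-orthogonal. Hence for any coherent sheaf $E$, forming the $S_x$-universal extension first and then the $S_y$-universal extension yields an object fitting into a universal extension of $E$ by $(\Ext{1}{S_x}{E}\otimes_k S_x)\oplus(\Ext{1}{S_y}{E}\otimes_k S_y)$ that is symmetric in $x$ and $y$. Combined with the fact that each $\si_x$ is already known to be a self-equivalence of $\coh{X}$, this produces a well-defined homomorphism $\Div{X}\to\Aut{\coh{X}}$, $D=\sum d_x x\mapsto \prod \si_x^{d_x}$.

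For part (1), the key point is that $\si_x$ sends line bundles to line bundles with class shifted by $[S_x]$. Since a line bundle $L$ is locally isomorphic to $\OX$ at $x$, the space $\Hom{L}{S_x}$ is one-dimensional; by Serre duality (and $\tau S_x \iso S_x$) so is $\Ext{1}{S_x}{L}$. Therefore the universal extension \eqref{eqn:UniversalExtension} reduces to $0\to L\to \si_x(L)\to S_x\to 0$, so $\si_x(L)$ is a vector bundle of rank one, i.e.\ a line bundle, with $[\si_x(L)]=[L]+[S_x]$. Given a finite length $U$, I would choose a composition series $0=U_0\subset U_1\subset\cdots\subset U_n=U$ with factors $S_{x_i}$ and define $L(U):=\si_{x_n}\cdots\si_{x_1}(L)$; induction shows $L(U)$ is a line bundle with class $[L]+[U]$. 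Uniqueness up to isomorphism is then immediate from Proposition~\ref{prop:LineBundleClass}.

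For part (2), given line bundles $L$ and $L'$, I would use Riemann-Roch \eqref{eqn:RiemannRoch} to force a nontrivial morphism. Choosing an effective divisor $D$ of degree $\geq g+\dg{L}-\dg{L'}$ and setting $L'':=L'(D)$ gives $\euform{L}{L''}=1-g+\dg{L''}-\dg{L}\geq 1$, hence $\Hom{L}{L''}\neq 0$. Any nonzero morphism $u:L\to L''$ must be injective: its kernel is a subsheaf of $L$, and any nonzero such subsheaf would have rank one (hence be a line bundle, forcing the image to have rank zero inside the vector bundle $L''$, i.e.\ to vanish) or have rank zero (impossible, as line bundles contain no simple subobject). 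The cokernel $U$ therefore has rank zero, i.e.\ is of finite length, and part (1) yields $L''\iso L(U)$. Applying the inverse element $-D\in\Div{X}$ gives $L'\iso L(U)(-D)=L(U-D)$, so $L'$ lies in the $\Div{X}$-orbit of $L$.

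The main obstacle I anticipate is the first paragraph: verifying cleanly that the shifts commute requires a careful universal-property argument and a uniform treatment of $\si_x^{\pm1}$ (one could instead appeal to the detailed construction in \cite[Section 10.3]{Lenzing:2007} already cited in Section~\ref{sect:ShiftAction}). Once the action is in place, parts (1) and (2) reduce to routine applications of Proposition~\ref{prop:LineBundleClass} and Riemann-Roch.
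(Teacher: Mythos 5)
Your proposal is correct and follows essentially the same route as the paper: commutativity of the shifts via Hom-Ext-orthogonality of simples at distinct points, part (1) via a composition series together with Proposition~\ref{prop:LineBundleClass}, and part (2) by twisting one line bundle until the Euler form becomes positive and identifying the finite-length cokernel. Your version merely spells out details the paper leaves implicit (the rank-one computation for $\si_x(L)$ and the explicit Riemann--Roch bound on the degree of the twist).
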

\begin{proof}
Due to the Hom-Ext-orthogonality for simple sheaves $S_x$, $S_y$ with $x\neq y$, the equivalences $\si_x$ and $\si_y$ always commute. This immediately implies the first assertion.

Invoking a composition series for the finite length sheaf $U$, the class $[U]$ is a (finite) linear combination $\sum_{x\in X}{d_x[S_x]}$ of classes of simple sheaves. Hence $L(U):=(\prod_{x\in X}\si_x^{d_x})(L)$ is a line bundle with class $[L]+[U]$. By Proposition~\ref{prop:LineBundleClass} $L(U)$ is uniquely determined by this property.

For the last assertion we consider line bundles $L_0$, $L$ and a point $x$. For large $n$ the Euler product $\euform{L_0}{L(nx)}$ is $>0$, yielding a non-zero morphism $L_0\to L(nx)$ and then a short exact sequence $0\to L_0\to L(nx)\to V\to 0$, where the cokernel-term $V$ has rank zero, and thus finite length. This implies $L(nx)=L_0(V)$ showing that $L$ and $L_0$ are in the same $\Div{X}$-orbit, proving the last claim.
\end{proof}
\begin{corollary} \label{cor:Knullprime}
Let $\Knullprime{X}$ denote the subgroup of $\Knull{X}$ generated by the classes $[S_x]$ of all simple sheaves $S_x$, $x\in X$. Then $\Knull{X}=\ZZ[\OX]\oplus\Knullprime{X}$.
\end{corollary}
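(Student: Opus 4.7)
The plan is to prove the decomposition in two stages: first establish $\Knull{X} = \ZZ[\OX] + \Knullprime{X}$, and then show that the intersection $\ZZ[\OX]\cap\Knullprime{X}$ is trivial.

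For generation I would start with an arbitrary coherent sheaf $A$ and apply Theorem~\ref{thm:CohX}(3) to split it as $E\oplus U$ with $E$ a vector bundle and $U$ of finite length. The class $[U]$ is a $\ZZ$-linear combination of classes of simple sheaves via a composition series, so $[U]\in\Knullprime{X}$. By Theorem~\ref{thm:CohX}(6), $[E]$ is a sum of line bundle classes, so it suffices to show that every line bundle class lies in $\ZZ[\OX]+\Knullprime{X}$. For this I would invoke Proposition~\ref{prop:DivisorGroup}: part~(2) says that every line bundle $L$ lies in the same $\Div{X}$-orbit as $\OX$, and combined with part~(1) this yields a finite length sheaf $V$ with $[L]=[\OX]+[V]$, completing generation.

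For directness I would use the rank function $\rk{-}=\euform{-}{S_x}$ from~\eqref{eqn:RankAndDegree}. From the Cartan matrix~\eqref{eqn:CartanMatrix} one reads off $\rk{\OX}=1$ and $\rk{S_x}=0$; more generally every simple sheaf has rank zero, either by the Hom-Ext-orthogonality between simples concentrated in distinct points or by identifying rank with length in the Serre quotient $\Hh/\Hhnull$. Consequently $\Knullprime{X}$ is contained in the kernel of the rank function, while any nonzero integer multiple $n[\OX]$ has rank $n\neq 0$, which forces $\ZZ[\OX]\cap\Knullprime{X}=0$. The argument is essentially routine; the one substantive input is the transitivity in Proposition~\ref{prop:DivisorGroup}(2), without which there would be no reason for an arbitrary line bundle to differ from the structure sheaf merely by a finite-length contribution.
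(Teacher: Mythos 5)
Your proof is correct and follows the same route as the paper: the paper likewise reduces generation to showing that line bundle classes lie in $\ZZ[\OX]+\Knullprime{X}$ via Theorem~\ref{thm:CohX} and then cites Proposition~\ref{prop:DivisorGroup}, and disposes of the intersection by the (implicit) rank argument. You have merely filled in the details that the paper leaves to the reader.
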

\begin{proof}
The subgroups $\ZZ[\OX]$ and $\Knullprime{X}$ have zero intersection. To show that their sum equals $\Knull{X}$, it suffices by  Theorem~\ref{thm:CohX} to show  that the class of each line bundle belongs to $H$. This is an immediate consequence of Proposition~\ref{prop:DivisorGroup}.
\end{proof}
\subsection{The divisor sequence}\label{sect:DivisorSequence}
Let $X$ be a smooth projective curve. The next result is useful to treat the divisor theory of $X$.
\begin{proposition} \label{prop:coker}
Let $f:L\to\bar{L}$ be a nonzero morphism between line bundles. Then the cokernel of $f$ is a finite direct sum
$$
\coker{f}=\bigoplus_{x\in X}U_x,
$$
where for each $x\in X$ the object $U_x$ is either zero or else is an indecomposable finite length sheaf concentrated in $x$.

Further, as a member of $\Hom{L}{\bar{L}}$, $f$ is determined by the class $[\coker{f}]_0$ in
$\Knull\Hhnull$ up to a member of the multiplicative group $k^*$ of $k$.
\end{proposition}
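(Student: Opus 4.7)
The plan is to prove the two assertions in turn; both rely on the local DVR structure of coherent sheaves on a smooth curve.

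For the cokernel decomposition, the first observation is that $f$ must be injective, since its kernel is a torsion subsheaf of the torsion-free line bundle $L$. The exact sequence $0\to L\to\bar L\to\coker{f}\to 0$ is then rank-additive, forcing $\rk{\coker{f}}=0$, so $\coker{f}$ lies in $\cohnull{X}$. To exhibit the point-wise decomposition I would pass to stalks: at each closed point $x$, both $L_x$ and $\bar L_x$ are free of rank one over the discrete valuation ring $\mathcal{O}_{X,x}$, so with a uniformizer $t_x$ the map $f_x$ is multiplication by $u\cdot t_x^{n_x}$ for some unit $u$ and integer $n_x\ge 0$. Hence $(\coker{f})_x\cong\mathcal{O}_{X,x}/(t_x^{n_x})$ is a cyclic torsion module over a DVR, therefore uniserial, so either zero or an indecomposable finite length sheaf concentrated at $x$. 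Taking $U_x$ to be this stalk yields the claimed direct sum.

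For the second assertion I would first identify $\Knull\Hhnull$ with the free abelian group on the classes $[S_x]$: the category $\cohnull{X}$ splits as a direct sum $\bigoplus_x (\cohnull{X})_x$ of uniserial blocks, each containing the unique simple $S_x$ and hence contributing a single free generator. Expanding $[\coker{f}]_0=\sum_x v_x(f)\,[S_x]$ in this basis identifies $v_x(f)$ with the length of $(\coker{f})_x$, i.e.\ with the exponent $n_x$ from the stalk analysis above. The hypothesis $[\coker{f}]_0=[\coker{g}]_0$ therefore forces $v_x(f)=v_x(g)$ at every point.

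The main --- and essentially only --- substantive step is then to promote this equality of local exponents to an equality of images $f(L)=g(L)$ as subsheaves of $\bar L$. Here one uses the crucial fact that a discrete valuation ring admits a unique rank-one submodule of each prescribed colength; applied to $\bar L_x\cong\mathcal{O}_{X,x}$, this gives $f(L)_x=g(L)_x$ at every $x$, and hence $f(L)=g(L)$ as subsheaves of $\bar L$. Both $f$ and $g$ now factor as isomorphisms of $L$ onto this common line-bundle subsheaf, and so differ by an element of $\Aut{L}$. Since $\End{}{L}=k$ on a smooth projective curve, $\Aut{L}=k^*$ and therefore $g=\lambda f$ for some $\lambda\in k^*$, as required.
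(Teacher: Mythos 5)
Your proof is correct, but it takes a genuinely different route from the paper's. You work locally: you pass to stalks, use that $\mathcal{O}_{X,x}$ is a discrete valuation ring to see that $f_x$ is multiplication by $u\cdot t_x^{n_x}$, conclude that each local component of the cokernel is cyclic and hence uniserial, and then derive the uniqueness statement from the fact that a DVR has a unique ideal of each finite colength, so that $[\coker{f}]_0=[\coker{g}]_0$ forces $f(L)=g(L)$ and $g=\lambda f$ via $\Aut{L}=k^*$. The paper instead argues entirely inside the category $\coh{X}$: indecomposability of each $U_y$ follows by contradiction from $\dim_k\Hom{\bar{L}}{S_y}=1$ (a decomposable $U_y$ would surject onto $S_y\oplus S_y$ and give $k^2\hookrightarrow k$), and the uniqueness of $f$ up to scalar is proved by induction on the length of the cokernel, peeling off a simple subobject $S_x$ via a pull-back diagram and using $\Ext{1}{S_x}{L}=k$ to pin down the intermediate line bundle $L_x$ and the map $f_x$ up to $k^*$. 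Your argument is more concrete and arguably shorter, but it leans on the geometric incarnation of $\coh{X}$ (structure sheaf of rings, stalks, uniformizers); the paper's argument uses only the Hom--Ext structure of a hereditary noetherian category with Serre duality, which is exactly what survives when the construction is transported to the weighted curves $\XX$ that are the paper's real objective. Both are complete proofs of the stated proposition.
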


\begin{proof} Assume that $U_y$ is decomposable for some $y\in X$. Then
$U_y$ maps onto $S_y\oplus S_y$, yielding an epimorphism $\bar{L}\twoheadrightarrow S_y\oplus S_y$.
Applying $\Hom{-}{S_y}$ thus yields a monomorphism
$k\oplus k=\Hom{S_y^2}{S_y}\to \Hom{\bar{L}}{S_y}=k$, contradiction. This
proves the first assertion.

Invoking this first result, the class $[\coker{f}]_0$ determines
$U=\coker{f}$ up to isomorphism. We prove the second claim by
induction on the length $\ell$ of $U$. For $\ell=0$ nothing is to
prove. If $\ell>0$ we choose a simple subobject, say $S_x$, of $U$
and analyze the pull-back diagram
$$
\def\bU{\bar{U}}
\def\bL{\bar{L}}
\xymatrix{
       &                       &        0            &        0        &          \\
       &                       &\bU\ar[u]\ar@{=}[r]  &      \bU\ar[u]  &          \\
0\ar[r]&L\ar[r]^f              &\bL\ar[u]\ar[r]      &U\ar[u]\ar[r]    &  0       \\
0\ar[r]&L\ar[r]_{f_x}\ar@{=}[u]&L_x\ar[r]\ar[u]_{g_x}&S_x\ar[u]_j\ar[r]&  0       \\
       &                       &0\ar[u]              &0\ar[u]          &
       }
$$
of $\eta$ along inclusion $j:S_x\hookrightarrow U$. Since $\Ext{1}{S_x}{L}=k$,
the line bundle $L_x$ is unique up to isomorphism and $f_x$, as a member of $\Hom{L}{L_x}$, is unique up to a scalar from $k^*$. By induction hypothesis, the same holds for $g_x$. The claim
follows.
\end{proof}

The \emph{function field} $\funct{X}$ of $X$ can be viewed as the endomorphism ring of the structure sheaf $\OX$ (or of any line bundle $L$) in the Serre quotient $\coh{X}/\cohnull{X}$, where the latter category then can be identified with the category $\mmod{\funct{X}}$ of finite dimensional $\funct{X}$-vector spaces. By Quillen's localization sequence, see \cite{Quillen:1973} the `exact sequence' of abelian categories \begin{equation*}
  0\to \cohnull{X}\to \coh{X}\to \mmod{\funct{X}}\to 0
\end{equation*}
gives rise to a long exact sequence of higher K-groups, which in a truncated form leads to the isomorphic exact sequences below
\begin{equation}\label{eqn:DivSequence}
\xymatrix@R12pt@C12pt{
0\ar[r]& k^*\ar@{=}[d] \ar[r] &\mathrm{K}_1(\mmod{\funct{X}}) \ar@{=}[d] \ar[r]&\Knull{\cohnull{X}}\ar@{=}[d]\ar[r]&\Knull{X}\ar@{=}[d]\ar[r]&\Knull{\mmod{\funct{X}}}\ar@{=}[d]\ar[r] &0\\
0\ar[r]& k^*\ar[r]  &\funct{X}^*\ar[r]^{\mathrm{div}}&\Div{X}\ar[r]^{\mathrm{nat}}&\Knull{X}\ar[r]^{\mathrm{rk}}&\ZZ\ar[r]&0
}
\end{equation}
where specifically the lower sequence is called the \emph{divisor sequence for $X$}.

\begin{theorem}\label{thm:DivisorSequence}
The divisor sequence for $X$ is exact.
\end{theorem}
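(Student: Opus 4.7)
The plan is to check exactness at each of the five positions of the divisor sequence directly, drawing on the results established earlier in this section. One could alternatively invoke Quillen's localization theorem through the identifications $\mathrm{K}_1(\mmod{\funct X}) = \funct{X}^*$, $\Knull{\cohnull X} = \Div X$, and $\Knull{\mmod{\funct X}} = \ZZ$, but a direct argument is more transparent and relies only on the machinery at hand.

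Two of the positions are essentially formal. Injectivity of $k^* \hookrightarrow \funct{X}^*$ is immediate. At $\ZZ$, surjectivity of $\rk$ follows from $\rk \OX = 1$. At $\Knull X$, the image of $\nat$ equals $\Knullprime X$ by definition, and this coincides with $\ker(\rk)$ by Corollary~\ref{cor:Knullprime} together with the values $\rk \OX = 1$ and $\rk S_x = 0$.

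The heart of the argument is exactness at $\Div X$. For the inclusion $\mathrm{im}(\divi) \subseteq \ker(\nat)$, I would represent any $f \in \funct{X}^*$ by a roof $\OX \xleftarrow{s} L \xrightarrow{u} \OX$ of line-bundle monomorphisms with cokernels in $\cohnull X$, so that $\divi(f) = [\coker u]_0 - [\coker s]_0$; the two defining short exact sequences give $[\coker s] = [\coker u] = [\OX] - [L]$ in $\Knull X$, whence $\nat(\divi(f)) = 0$. Conversely, given $D = D_+ - D_- \in \ker(\nat)$ with $D_\pm$ effective, the shift action of Proposition~\ref{prop:DivisorGroup} supplies line bundles $\OX(D_\pm)$ together with canonical embeddings $\alpha_\pm \colon \OX \hookrightarrow \OX(D_\pm)$ whose cokernels have classes $D_\pm$ in $\Knull{\cohnull X} = \Div X$. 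The assumption $\nat(D) = 0$ forces $[\OX(D_+)] = [\OX(D_-)]$ in $\Knull X$, whereupon Proposition~\ref{prop:LineBundleClass} delivers an isomorphism $\phi \colon \OX(D_-) \xrightarrow{\sim} \OX(D_+)$. Viewing $\alpha_+$ and $\phi \circ \alpha_-$ as two parallel monomorphisms $\OX \to \OX(D_+)$, their ratio in $\tHh$ produces an element $f \in \End{\tHh}{\OX}^* = \funct{X}^*$ with $\divi(f) = \pm D$, placing $D$ in the image of $\divi$ in either case.

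Finally, exactness at $\funct{X}^*$ is precisely the uniqueness clause of Proposition~\ref{prop:coker}: if $\divi(f) = 0$, then any roof $\OX \xleftarrow{s} L \xrightarrow{u} \OX$ representing $f$ has $[\coker u]_0 = [\coker s]_0$ with $s, u \in \Hom{L}{\OX}$, so $u = \lambda s$ for some $\lambda \in k^*$, and therefore $f = u s^{-1} = \lambda \in k^*$. The one subtlety I anticipate is reconciling the sign conventions in the definition of $\divi$ as the Quillen boundary with the naive ``zeros minus poles'' bookkeeping used in the divisor-theoretic step above; once that is settled, every step draws directly on a result already established in this section.
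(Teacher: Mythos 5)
Your proposal is correct and follows essentially the same route as the paper: formal checks at $k^*$, $\Knull{X}$ and $\ZZ$, exactness at $\funct{X}^*$ via the uniqueness clause of Proposition~\ref{prop:coker}, and exactness at $\Div{X}$ by realizing both effective parts of a divisor as cokernels of maps between line bundles and then invoking Proposition~\ref{prop:LineBundleClass} to identify the two line bundles and form the quotient $f=s^{-1}u$. The only cosmetic difference is that you embed $\OX$ into $\OX(D_\pm)$ whereas the paper works with sub-line-bundles $L'\subset L_0$; the key inputs (Propositions~\ref{prop:coker}, \ref{prop:LineBundleClass} and Corollary~\ref{cor:Knullprime}) are identical.
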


\begin{proof}
We put $\Hh=\coh{X}$, $\Hhnull=\cohnull{X}$,  $\tHh=\Hh/\Hhnull$ and $L_0=\OX$.

The natural homomorphism $\End{\Hh}{L_0}\to \End\tHh{L_0}$, $f\mapsto \tf$, yields an interpretation of the embedding $k\hookrightarrow\funct{X}$ that we are going to use later on. In particular the sequence (\ref{eqn:DivSequence}) is exact at $k^*$.

The sequence is exact at $\funct{X}^*$: By construction $\divi{f}=0$ holds for each $f\in \funct{X}$ represented by a non-zero member, that is, an isomorphism in $\End\Hh{L_0}$. Assume conversely that $f$ is represented by $f':L' \to L_0$ with $0\neq L'\subset L_0$ and that we have
$$
0 = \divi{f}=[\coker{f'}]_0-[\coker{j}]_0,
$$
where $j:L'\hookrightarrow L_0$ is the inclusion. By Proposition~\ref{prop:coker} we conclude that there exists $\al\in k^*$ yielding a commutative diagram
$$
\xymatrix{
L_0 \ar[r]^\al & L_0\\
L' \ar[u]^j\ar[ru]_{f'}
}
$$
and $f=\al$ follows.

It is clear that composition $\mathrm{nat}\circ\mathrm{div}$ equals zero. Assume now that $a$ is in $\Knull\Hhnull$, then $a=[V]_0-[U]_0$ for suitable objects $U$ and $V$ from $\Hhnull$. We then get short exact sequences $0\to L' \up{s} L_0 \to U \to 0$ and $0\to L'' \up{a}L_0\to V \to 0$ and such that $[L']=[L'']$ in $\Knull\Hh$. Since line bundles are determined by their class in $\Knull{X}$ we may assume that $L'= L''$. Putting $f=as^{-1}$ we obtain $f\in \funct{X}$ with $\divi{f}=[V]_0-[U]_0$, hence the exactness of the divisor sequence at $\Knull\Hhnull$.

Since the rank vanishes on objects of $\Hhnull$ composition $\mathrm{rk}\circ\mathrm{nat}$ is zero. Conversely assume that $u=[X]-[Y]$ has rank zero such that $X$ and $Y$ have the same rank $r$. By Corollary~\ref{cor:Knullprime} $[X]=r[L_0]+x$ and $[Y]=r[L_0]+y$ where $x$ and $y$ hence also $u=x-y$ belong to the image of $\mathrm{nat}$. This finishes the proof of the theorem.
\end{proof}

In particular, the image of the natural map $\Knull{\cohnull{X}}\to \Knull{\coh{X}}$, $[U]_0\mapsto [U]$, equals the cokernel of the divisor map $\divi{}: \funct{X}^*\to \Knull{\cohnull{X}}=\Div{X}$, the \emph{divisor class group} $\Cl{X}$ of $X$.

\begin{corollary}
  The map $\Cl{X}\oplus\ZZ[\OX]\to \Knull{\coh{X}}$, $([D],n[\OX]\mapsto [D]+n[\OX]$, yields an isomorphism $\Knull{\coh{X}}=\Cl{X}\oplus\ZZ[\OX]$.~\qed
\end{corollary}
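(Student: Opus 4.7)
The plan is to combine Theorem~\ref{thm:DivisorSequence} with Corollary~\ref{cor:Knullprime}. First I would verify that the proposed map is well defined. Recall that $\Cl{X}$ is by definition the cokernel of $\divi: \funct{X}^*\to \Knull{\cohnull{X}}=\Div{X}$, while $\nat:\Div{X}\to\Knull{\coh{X}}$ is the natural morphism induced by the inclusion $\cohnull{X}\hookrightarrow\coh{X}$. Exactness of the divisor sequence at $\Knull{\cohnull{X}}=\Div{X}$ (the content of Theorem~\ref{thm:DivisorSequence}) says precisely that $\nat$ factors through an injection $\overline{\nat}:\Cl{X}\hookrightarrow \Knull{\coh{X}}$. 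Hence the rule $([D],n[\OX])\mapsto \overline{\nat}([D])+n[\OX]$ is an unambiguously defined group homomorphism.

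Next I would identify the image of $\overline{\nat}$. By construction $\nat$ sends a closed point $x\in X$ to the class $[S_x]$ of the simple sheaf concentrated at $x$, so the image of $\overline{\nat}$ is exactly the subgroup $\Knullprime{X}\subset\Knull{\coh{X}}$ generated by the classes of all simple sheaves. Thus $\overline{\nat}$ is an isomorphism from $\Cl{X}$ onto $\Knullprime{X}$.

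Finally I would invoke Corollary~\ref{cor:Knullprime}, which provides the direct sum decomposition $\Knull{\coh{X}}=\ZZ[\OX]\oplus\Knullprime{X}$. Combining this with the isomorphism $\overline{\nat}:\Cl{X}\xrightarrow{\;\sim\;}\Knullprime{X}$ gives the asserted identification $\Knull{\coh{X}}=\Cl{X}\oplus\ZZ[\OX]$.

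No real obstacle is expected: every ingredient is already contained in the previous two results, and the only care required is the bookkeeping ensuring that $\overline{\nat}$ is well defined on classes modulo principal divisors, which is exactly exactness of the divisor sequence at $\Div{X}$.
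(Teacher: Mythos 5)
Your proposal is correct and follows exactly the route the paper intends: the remark immediately preceding the corollary identifies the image of $\nat$ with $\Cl{X}$ via exactness of the divisor sequence, and the splitting then comes from Corollary~\ref{cor:Knullprime}, just as you argue. The only detail worth noting is that identifying the image of $\nat$ with $\Knullprime{X}$ uses that $\Knull{\cohnull{X}}$ is generated by classes of simple sheaves, which is immediate from composition series; otherwise nothing is missing.
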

\begin{remark}
Let $\Pic{X}$ denote the \emph{Picard group} of $X$, that is, the group of isomorphism classes of line bundles on $X$ with respect to the tensor product. The action $\Div{X}\times \Pic{X}\to \Pic{X}$, $D\mapsto \OX(D)$, from Proposition~\ref{prop:DivisorGroup} induces an isomorphism between the class group $\Cl{X}$ and the Picard group. The surjective degree function $\Pic{X}\to \ZZ$ further induces a splitting $\Pic{X}=\Picnull{X}\oplus \ZZ$. It is known that $\Picnull{X}$ is isomorphic to the Jacobian variety of
$X$. Assume $g\geq1$. For $k=\CC$, the field of complex numbers,
it is classical that the Jacobian variety is isomorphic to
$(\CC/(\ZZ\times\ZZ))^{g}$~\cite{Hartshorne:1977}, but also for an
arbitrary algebraically closed field the Jacobian variety is a
(nontrivial) divisible abelian group, see~\cite{Mumford:2008},
which makes it impossible that $\Knull{X}$ is either free or
finitely generated.
\end{remark}

\section{Coherent sheaves on weighted projective curves}

\subsection{The category of $p$-cycles}

Let $\mathcal{H}$ be a category of coherent sheaves on a smooth projective
curve. Fixing a point $x$ of $X$ and an integer $%
p\geq 2$ we are going to form the category $\mathcal{\bar{H}}$ of $p$-cycles
in $x$ which will be viewed as the category of coherent sheaves on the weighted curve $%
\XX$, having $X$ as underlying smooth projective curve and $x$ as its only weighted point (of weight $p$). Intuitively speaking the effect of the following construction is to form a $p$-th root
of the natural transformation ${x}_{E}:E\rightarrow E(x)$ corresponding to
the point $x$, relating to the construction of the
Riemann surface of the $p$-th root function. This approach to weighted projective curves
is particularly suitable for a K-theoretic analysis. Below we reproduce definition and main properties of $p$-cycles from \cite{Lenzing:1998}.

\begin{definition}\em A $p$-\emph{cycle} $E$ \emph{concentrated in} $x$ is a diagram
\[
\cdots   \rightarrow  {E}_{n}\stackrel {{x}_{n}}{ \longrightarrow  }{E}_{n + 1}
\stackrel{x_{n+1}}\longrightarrow{E}_{n + 2}\stackrel {{x}_{n + 2}}{ \longrightarrow  }\cdots   \longrightarrow
{E}_{n + p}\stackrel {{x}_{n + p}}{ \longrightarrow  }\cdots
\]
\noindent
which is $p$-{\em periodic} in the sense that ${E}_{n + p}  = {E}_{n} (x), $ ${x}_{n
+ p}  = {x}_{n} (x)$ and moreover ${x}_{n + p - 1} \circ \cdots  \circ {x}_{n}
 = {x}_{{E}_{n}}$ holds for each integer $n$.

A morphism $u : E \to  F$ of $p$-cycles concentrated in $x$ is a sequence of morphisms
${u}_{n}  : {E}_{n}  \to  {F}_{n}$ which is $p$-\emph{periodic}, i.e.\ satisfies
${u}_{n + p}  = {u}_{n}$ for each $ n$, and such that each diagram
\[
\begin{array}{ccc}
{E}_{n} &  \stackrel{x_n}{\longrightarrow}   & {E}_{n + 1}\\
{\scriptstyle {u}_{n}} \downarrow  &  & \downarrow {\scriptstyle {u}_{n + 1}}\\
{F}_{n} & \stackrel {{x}_{n}}{ \longrightarrow  } & {F}_{n + 1}\end{array}
\]
\noindent
commutes. We denote $p$-cycles in the form ${E}_{0}\stackrel {{x}_{0}}{
\longrightarrow  }{E}_{1}\stackrel {{x}_{1}}{ \longrightarrow  }\cdots   \to
{E}_{p - 1}\stackrel {{x}_{p - 1}}{ \longrightarrow  }{E}_{0} (x)$   and the
category of all  $p$-cycles concentrated in $x$ by $\overline{\mathcal{H} }  =
\mathcal{H} \scriptstyle \left ({\begin{array}{c}
p\\
x\end{array}}\right )$.
\end{definition}

Obviously $\overline{\mathcal{H}}$ is an abelian category, where exactness
and formation of kernels and cokernels has a pointwise interpretation.
Moreover we have a full exact embedding
\[
j:\mathcal{H}\hookrightarrow \overline{\mathcal{H}},\qquad \;E\mapsto \bar{E}%
=E=\cdots =E\stackrel{{x}_{E}}{\longrightarrow }E(x).
\]
\noindent We therefore identify $\mathcal{H}$ with the resulting exact
subcategory of $\overline{\mathcal{H}}$. We note that inclusion $j:\mathcal{H%
}\to \overline{\mathcal{H}}$ has a left adjoint $\ell $ and a right
adjoint $r$ which are both exact functors and are given by

\begin{equation}
\ell \left( E_{0}\stackrel{x_{0}}{\to }E_{1}\stackrel{x_{1}}{%
\to }\cdots \to E_{p-1}\stackrel{x_{p-1}}{\to }%
E_{0}(x)\right) =E_{p-1}
\end{equation}
and
\begin{equation}
r\left( E_{0}\stackrel{x_{0}}{\to }E_{1}\stackrel{x_{1}}{\to
}\cdots \to E_{p-1}\stackrel{x_{p-1}}{\to }E_{0}(x)\right)
=E_{0}.
\end{equation}

\begin{lemma}The category $\overline{\mathcal{H} }$ is connected, abelian and
noetherian. The simple objects (sheaves) of $\overline{\mathcal{H} }$ occur in two
types:

\begin{enumerate}
\item
the simple sheaves $S_y$ of $\mathcal{H} $ which are concentrated in a point $y\neq x$.

\item
the $p$ simple sheaves
\[
\begin{array}{ccc}
{S}_{1}  :  &  & 0 \to  0 \to  \cdots   \to  0
\to  S_x \to  0\\
{S}_{2} : &  & 0 \to  0 \to  \cdots   \to  S_x
\to  0 \to  0\\
\cdots  &  & \cdots \\
{S}_{p - 1} : &  & 0 \to  S_x \to  \cdots   \to  0
\to  0 \to  0\\
{S}_{p} : &  & S \to  0 \to  \cdots   \to  0
\to  0 \to  S_x(x),\end{array}
\]
\end{enumerate}
said to be concentrated in $x$.

Each ${S}_{i}$ is exceptional and $\Ext{1}{S_i}{S_{i+1}}=k$ for each $i\in\ZZ_p$.

If $\mathcal{S}  =  \left \{{{S}_{1} , \ldots  , {S}_{p - 1} |S \mbox{ simple
in } \mathcal{H}  \mbox{ and concentrated in } x}\right \}$, then the
extension closure $\wt{\mathcal{S}}$ of $\mathcal{S} $ is
localizing in $\overline{\mathcal{H} }$. Moreover

\renewcommand{\labelenumi}{\alph{enumi}.}
\begin{enumerate}
\item
the quotient category $\overline{\mathcal{H} } /\wt{\mathcal{S}}
\cong  \mathcal{H} $ is isomorphic to $\mathcal{H} $, the isomorphism induced
by $r :  \overline{\mathcal{H} }  \to  \mathcal{H} $.

\item
the right perpendicular category ${\mathcal{S} }^{\perp }$ formed in
$\overline{\mathcal{H} }$ is equivalent to $\mathcal{H} $.
\end{enumerate}
\end{lemma}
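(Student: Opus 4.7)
The plan is to dispatch the assertions in sequence. First, $\overline{\Hh}$ is abelian and noetherian: kernels, cokernels and short exact sequences in $\overline{\Hh}$ are formed componentwise in $\Hh$, and both the periodicity $E_{n+p}=E_n(x)$ and the factorization condition $x_{n+p-1}\circ\cdots\circ x_n=x_{E_n}$ are preserved by this componentwise construction; noetherianness descends from $\Hh$ because any ascending chain stabilizes at each of the finitely many components $E_0,\dots,E_{p-1}$.

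For the classification of simple $p$-cycles, I would first verify that the two listed types are indeed simple. The constant cycle $j(S_y)$ for $y\neq x$ has no proper nonzero sub-cycle, since $\si_x$ fixes $S_y$ and the connecting maps of $j(S_y)$ are identities; and each $S_i$ has a single nonzero component $S_x$, so a proper sub-cycle must vanish there. Conversely, let $E$ be simple. If some component $E_n$ has a simple subobject $T\iso S_y$ for $y\neq x$, then a direct analysis (using that $\si_x$ fixes $S_y$) produces a sub-cycle of $E$ isomorphic to $j(S_y)$, so simplicity forces $E\iso j(S_y)$. Otherwise every simple subobject of every component is a copy of $S_x$, and the factorization condition together with the noetherian hypothesis isolates a sub-cycle isomorphic to some $S_i$.

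The exceptional property and the formula $\Ext1{S_i}{S_{i+1}}=k$ come out of the factorization condition. Trivially $\End{\overline{\Hh}}{S_i}=k$. A putative self-extension of $S_i$ would sit as a length-two extension of $S_x$ by $S_x$ at a single component, with all connecting maps zero; the $p$-fold composition at that component would then vanish, contradicting the fact that $\al$ on this length-two sheaf is a nonzero nilpotent (``multiplication by a uniformizer at $x$''). Hence $\Ext1{S_i}{S_i}=0$. For the nonvanishing $\Ext1{S_i}{S_{i+1}}$, the length-two cycle $M_i$ with $S_x$ in two adjacent components, one connecting map the identity and the others zero, supplies the nonsplit extension, and the factorization condition pins it down to a one-dimensional class.

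Finally, $\wt{\mathcal S}$ is the full subcategory of finite-length objects of $\overline{\Hh}$ whose Jordan--H\"older factors lie in $\{S_1,\dots,S_{p-1}\}$, hence Serre and (its objects being of finite length) automatically localizing. For~(a) the right adjoint $r$ annihilates $S_1,\dots,S_{p-1}$, hence $\wt{\mathcal S}$, and descends to a functor $\overline{\Hh}/\wt{\mathcal S}\to\Hh$; combined with $r\circ j\iso\mathrm{id}_\Hh$ and essential surjectivity of $\pi\circ j$, Gabriel--Serre quotient theory yields the desired equivalence. For~(b) I would invoke the standard localization--perpendicular correspondence: the composite $\rperp{\mathcal S}\hookrightarrow\overline{\Hh}\to\overline{\Hh}/\wt{\mathcal S}\iso\Hh$ is an equivalence, the section functor of the Gabriel--Serre localization providing the quasi-inverse. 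The principal obstacle I anticipate is identifying $\al$ on length-two torsion sheaves at $x$ as a nonzero nilpotent; without this concrete input the self-extension calculation cannot be pushed through, but once it is in hand the remaining arguments run along standard lines.
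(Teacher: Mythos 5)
Your write-up supplies far more detail than the paper's own proof, which consists of two sentences: abelianness and noetherianness are ``obvious by pointwise consideration,'' and the single concrete ingredient given is an \emph{explicit} description of $\rperp{\mathcal S}$ as those $p$-cycles in which $x_0,\dots,x_{p-2}$ are isomorphisms, whence $\rperp{\mathcal S}\simeq\Hh$; everything else is declared straightforward. Where you genuinely diverge is part~(b): instead of identifying $\rperp{\mathcal S}$ explicitly, you want to deduce it from the abstract localization--perpendicular correspondence. That route is legitimate, but it places all the weight on first knowing that $\wt{\mathcal S}$ is localizing, and your justification of that step is false: a Serre subcategory is \emph{not} automatically localizing because its objects have finite length. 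In $\coh{\PP^1}$ the finite-length sheaves supported at a single point $x$ form such a Serre subcategory, yet the quotient functor has no right adjoint landing in coherent sheaves (the would-be section functor sends $\OX$ to $\varinjlim_n\OX(nx)$, which is not coherent); the obstruction is that $\Ext{1}{S_x}{S_x}\neq0$. What saves $\wt{\mathcal S}$ here is precisely that $S_1,\dots,S_{p-1}$ form an exceptional sequence, so their extension closure contains only finitely many indecomposables and the Geigle--Lenzing universal-extension construction yields the torsion subobject and the section functor. In that development the perpendicular category is the tool used to \emph{prove} localization, so the paper's explicit computation of $\rperp{\mathcal S}$ is not a detour you can skip but the actual engine; as written, your argument for (a) and (b) is circular at this point.

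The computational core of your proposal --- that $x_E$ vanishes on $S_x$ but is a nonzero nilpotent on the length-two uniserial sheaf concentrated at $x$ --- is exactly right and correctly drives the verification that the $S_i$ are legitimate $p$-cycles, that they have no self-extensions, and that $\Ext{1}{S_i}{S_{i+1}}=k$. One further small gap: your dichotomy for classifying simples (some component has a subobject $S_y$ with $y\neq x$, versus all simple subobjects of all components are copies of $S_x$) omits the case where a component is a nonzero vector bundle, which has no simple subobjects at all; you must first exclude positive-rank components, e.g.\ by noting that a line subbundle of a component generates a sub-cycle that admits proper nonzero sub-cycles. This second point is at the level of detail the paper itself suppresses, but the localizing step needs to be repaired before the rest of your argument for (a) and (b) goes through.
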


\begin{proof} Abelianness and noetherianness are obvious by pointwise
consideration. It is straightforward from the construction and from the
properties of $\mathcal{H}$ that the category ${\mathcal{S}}^{\perp }$ right
perpendicular to $\mathcal{S}$ consists of exactly those $p$-cycles $E_{0}%
\stackrel{x_{0}}{\to }E_{1}\stackrel{x_{1}}{\to }\cdots
\to E_{p-1}\stackrel{x_{p-1}}{\to }E_{0}(x)$ such that $%
x_{0},\ldots ,x_{p-2}$ are isomorphisms, hence --- up to isomorphism ---
agree with the objects from $\mathcal{H}$. The remaining properties are
straightforward to check.
\end{proof}

Note that the formation of categories of $p$-cycles may be iterated. Let therefore $x_1,x_2,\ldots,x_t$ be pairwise distinct points of $X$ and $p_1,p_2,\ldots,p_t$ a corresponding sequence of weights, the iterated $p$-cycle construction leads to a category $\overline{\Hh}=\Hh\begin{pmatrix}
                p_1,p_2,\ldots,p_t \\
                x_1,x_2,\ldots,x_t
              \end{pmatrix}$
that we view as category of coherent sheaves $\coh{\XX}$ for the weighted projective curve given by the above weight data.
Comparison with the categories of~\cite{Reiten:VandenBergh:2002} yields the following:

\begin{theorem}The category $\overline{\Hh}=\coh{\XX}$ is again a hereditary noetherian $k$-category with Serre duality in the form $\Ext{1}{A}{B}=\dual{\Hom{B}{\tau A}}$. Moreover, the following properties hold:
\begin{enumerate}
  \item For each $x$ in $X$ there is a unique simple object (sheaf) $S_x$ concentrated in $x$ with the property $\Hom{\OX}{S_x}=k$.
  \item For $i=1,\ldots,t$ we put $S_i=S_{x_i}$. Then the simple sheaves concentrated in $x_i$ are exceptional and form a $\tau$-orbit $(\tau^jS_i)_{j\in\ZZ_{p_i}}$ of period $p_i$. The remaining simples satisfy $\tau S_x=S_x$.
  \item The exceptional simple sheaves, different from $S_1,S_2,\ldots,S_t$ can be arranged to form an exceptional sequence $\Ee$ such that the right perpendicular category $\rperp{\Ee}$, formed in $\overline{\Hh}$, equals $\Hh=\coh{X}$.\qed
\end{enumerate}
\end{theorem}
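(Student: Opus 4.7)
The plan is to reduce the theorem, by induction on the number $t$ of weighted points, to a single $p$-cycle step. Since the shift equivalences $\si_{x}$ and $\si_{y}$ at distinct points $x\ne y$ commute, the iterated construction of $\overline{\Hh}$ is independent of the order and one may argue one weighted point at a time: assuming $\Hh$ is a hereditary noetherian $k$-category with Serre duality and a description as in the theorem for $t-1$ weighted points, it suffices to derive the same properties for $\overline{\Hh}=\Hh\begin{pmatrix}p\\ x\end{pmatrix}$ after adjoining a new weighted point $x$ of weight $p$. The preceding lemma supplies the bulk of the structural data.

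Abelianness and noetherianness of $\overline{\Hh}$ are pointwise and already noted. For heredity, the localisation sequence $0\to\wt{\mathcal{S}}\to\overline{\Hh}\to\Hh\to 0$ has hereditary ends: the Serre subcategory $\wt{\mathcal{S}}$ is uniserial with $\Ext{1}{S_i}{S_{i+1}}=k$ and no other nontrivial extensions, and $\Hh$ is hereditary by induction. The adjoint triple $\ell\dashv j\dashv r$ from the lemma, together with the associated long Ext sequences, yields the vanishing $\Ext{2}{A}{B}=0$ in $\overline{\Hh}$; a direct reference is \cite{Lenzing:1998}. To produce Serre duality, I would define the Auslander--Reiten translate $\bar\tau$ of $\overline{\Hh}$ by cyclically shifting the $p$-cycle by one slot and applying $\tau_{\Hh}$ across the boundary from slot $p-1$ back to slot $0$; this extends $\tau_{\Hh}$ on objects supported away from $x$ and cyclically permutes the $p$ simples $S_1,\ldots,S_p$ at $x$. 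Serre duality $\Ext{1}{A}{B}=\dual{\Hom{B}{\bar\tau A}}$ is then checked on a set of generators, reducing to the Serre duality of $\Hh$ (inductive hypothesis) together with the explicit Ext computations of the lemma.

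With $\bar\tau$ in hand, parts (1)--(2) follow from the classification of simples provided by the lemma. For $y\ne x$ the equality $\Hom{\OX}{S_y}=k$ and the identity $\bar\tau S_y=S_y$ are inherited via the embedding $j:\Hh\hookrightarrow\overline{\Hh}$. At the weighted point $x$, a direct computation of $\Hom_{\overline{\Hh}}(\bar\OX,S_i)$ by inspection of the defining commutativity squares yields zero for $1\le i\le p-1$ (the single potentially nonzero component is killed by a square involving a zero slot) and $\Hom_{\Hh}(\OX,S_x)=k$ for the remaining simple (the one with $S_x$ in slot $0$, denoted $S_p$ in the lemma). Declaring this simple to be the distinguished $S_x$ establishes (1). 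Part (2) then follows: $\bar\tau$ cyclically permutes the $p_i$ simples at each $x_i$, yielding a $\tau$-orbit of length $p_i$, and each such simple is exceptional by the Ext data of the lemma; for unweighted $y$ the identity $\bar\tau S_y=S_y$ was already noted.

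For (3), within each weighted point $x_i$ the $p_i-1$ simples distinct from the distinguished $S_i$ form, in the natural cyclic order, an exceptional sequence, since the Ext$^1$ pattern $k,k,\ldots,k$ runs only in one direction of the cycle and all other pairwise Homs and Exts vanish. Simples concentrated at different weighted points are Hom-Ext orthogonal, so the concatenation of these subsequences across $i=1,\ldots,t$ is an exceptional sequence $\Ee$ in $\overline{\Hh}$; iterating part (b) of the lemma once per weighted point identifies $\rperp{\Ee}$ with $\coh{X}$, proving (3). The main obstacle is the precise definition of $\bar\tau$ and its compatibility with the intrinsic twist by $x$ built into the definition of a $p$-cycle; once $\bar\tau$ is correctly set up, the remaining verifications are combinatorial bookkeeping driven by the lemma and by Theorem~\ref{thm:CohX}.
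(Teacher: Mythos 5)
Your overall architecture --- reduce to one weighted point by commutativity of the shifts, read off the simples and the perpendicular category from the $p$-cycle lemma, and verify (1)--(3) by slotwise computation --- is sound, and several of your verifications are genuinely correct: the vanishing $\Hom{\OXX}{S_i}=0$ for $i=1,\ldots,p-1$ (killed by the square with a zero slot), the identification of the distinguished simple as the one with $S_x$ in slot $0$ (which tacitly uses $x_{S_x}=0$, a point worth making explicit), and the ordering of the remaining simples into an exceptional sequence using that $\Ext{1}{S_i}{S_{i+1}}=k$ runs in only one direction around the tube. Note that the paper itself does not prove this theorem; it invokes the comparison with the categories of Reiten--van den Bergh and the preceding lemma, so your attempt at a self-contained argument is a legitimately different route.

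That said, there are two genuine soft spots, both at the step the paper outsources. First, heredity: the inference ``the localization sequence $0\to\wt{\mathcal{S}}\to\overline{\Hh}\to\Hh\to0$ has hereditary ends, hence $\overline{\Hh}$ is hereditary'' is not valid --- a Serre subcategory and its quotient can both be hereditary while the ambient category has global dimension $\geq 2$. What actually works is a direct computation of $\Ext{2}{-}{-}$ in $\overline{\Hh}$ using the exact adjoints $\ell$ and $r$ (or the citation to \cite{Lenzing:1998}); you mention this, but it is the argument, not a supplement to the localization sequence. Second, and more seriously, Serre duality: your candidate $\bar\tau$ is under-specified. The correct construction is $(\bar\tau E)_n=\tau_{\Hh}(E_{n+1})$ with structure maps $\tau_{\Hh}(x_{n+1})$, which typechecks only after one verifies that $\tau_{\Hh}$ commutes with $\si_x$ compatibly with the natural transformation $x_{(-)}$; the direction of the shift is then forced by $\Ext{1}{S_i}{S_{i+1}}=k=\dual{\Hom{S_{i+1}}{\tau S_i}}$, i.e.\ $\tau S_i=S_{i+1}$. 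More importantly, Serre duality is a natural isomorphism of bifunctors, and ``checking on a set of generators'' can at best confirm an equality of dimensions in the Grothendieck group; it cannot produce the required functorial duality. One must either construct the trace maps explicitly (e.g.\ via the graded or sheaf-of-orders model) or cite Reiten--van den Bergh, as the paper does. Also beware the phrase ``extends $\tau_{\Hh}$'': the new $\tau$ agrees with the old one only on torsion sheaves away from $x$, not on vector bundles, as the class formula for $\tau\OXX$ in the paper makes clear.
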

\begin{corollary}
The category $\Hh$ is a full, exact subcategory of $\overline{\Hh}$, and inclusion induces an equivalence of Serre quotients $\Hh/\Hhnull=\overline{\Hh}/\overline{\Hh}_0$. Accordingly, $\Hh$ and $\overline{\Hh}$ share the same function field $K$ and inclusion from $\Hh$ to $\overline{\Hh}$ is rank-preserving.\qed
\end{corollary}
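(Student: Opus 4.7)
The first assertion reduces to the construction: at each step of the iterated $p$-cycle construction the inclusion $E \mapsto (E = E = \cdots = E \to E(x))$ sending an object to its constant $p$-cycle is a full exact embedding by the preceding lemma, and composing these embeddings across all weighted points yields the full exact inclusion $j : \Hh \hookrightarrow \overline{\Hh}$.

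For the Serre quotient equivalence the plan is to exploit the existing equivalence $\overline{\Hh}/\wt{\mathcal{S}} \iso \Hh$ supplied by the (iterated) lemma and induced by the right adjoint $r$, where $\wt{\mathcal{S}}$ is the extension closure of the exceptional simples at the weighted points (namely the $p_i - 1$ simples at each $x_i$ that do not descend to $S_{x_i}$ in $\Hh$). Since $\wt{\mathcal{S}} \subseteq \overline{\Hh}_0$, the second isomorphism theorem for Serre quotients gives
\begin{equation*}
\overline{\Hh}/\overline{\Hh}_0 \iso (\overline{\Hh}/\wt{\mathcal{S}})/(\overline{\Hh}_0/\wt{\mathcal{S}}).
\end{equation*}
The key intermediate step is to identify $\overline{\Hh}_0/\wt{\mathcal{S}}$, as a Serre subcategory of $\overline{\Hh}/\wt{\mathcal{S}} \iso \Hh$, with $\Hhnull$. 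That $r(\overline{\Hh}_0) \subseteq \Hhnull$ follows by exactness from inspecting the simples: each non-weighted simple $S_y$ maps to $S_y$, while the exceptional simples at a weighted point $x_i$ go either to $0$ or to $S_{x_i}$, all of which lie in $\Hhnull$. The restriction of $\bar r$ to $\overline{\Hh}_0/\wt{\mathcal{S}}$ is then fully faithful (inherited from the ambient equivalence) and essentially surjective onto $\Hhnull$ (since $r(jU) = U$ for every $U \in \Hhnull$), so it is an equivalence. Combining these yields $\overline{\Hh}/\overline{\Hh}_0 \iso \Hh/\Hhnull$, and because $r \circ j = \mathrm{id}_\Hh$ the quasi-inverse of $\bar r$ is the functor induced by $j$; hence the composite equivalence is the one induced by the inclusion $j$ itself.

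The remaining two assertions then follow formally. The function field of either category is the endomorphism ring of the (image of the) structure sheaf in the corresponding Serre quotient, and $\bar j$ carries $\OX$ to $\OXX = j(\OX)$, so both categories share the same function field $K$. Rank is the length of the image in the corresponding Serre quotient, and equivalences of abelian categories preserve length; hence $\rk{jE} = \rk{E}$ for every $E \in \Hh$.

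The main obstacle I anticipate lies in the bookkeeping required for the iteration across several weighted points: the lemma is stated only for a single weighted point, and the right adjoint $r$, the extension closure $\wt{\mathcal{S}}$, and the identification with $\Hhnull$ must each be propagated correctly through the iterated construction. This is essentially routine because the $p$-cycle constructions at distinct weighted points commute, but some care is needed to glue the one-point equivalences into the global one.
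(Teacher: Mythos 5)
Your argument is correct and follows exactly the route the paper intends: the corollary is stated with no written proof, as an immediate consequence of the $p$-cycle lemma, and your use of the full exact embedding $j$, the localization $\overline{\Hh}/\wt{\mathcal{S}}\iso\Hh$ induced by $r$, and the third isomorphism theorem for Serre quotients (together with $r\circ j=\mathrm{id}_\Hh$ to see the equivalence is induced by the inclusion) is the natural unpacking of that. The iteration over several weighted points is, as you say, routine since the constructions at distinct points commute.
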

In particular, the structure sheaf $\OX$ of $\Hh$ is a line bundle in $\overline{\Hh}$ which we use as \emph{structure sheaf} for $\overline{\Hh}$: Notation: $\OXX$. We further put $\coh{\XX}=\overline{H}$ and $\Knull{\XX}=\Knull{\coh{\XX}}$.

By \cite{Geigle:Lenzing:1991} and making use of Proposition~\ref{prop:RClass} we get the following immediate consequence of the proposition.
\begin{corollary}
The Grothendieck group $\Knull{\XX}$ decomposes into a direct sum
\begin{equation*}
\Knull{\XX}=\Knull{X}\oplus\left(\bigoplus_{E\in \Ee}\ZZ[E]\right).
\end{equation*}
In particular, $\Knull{\XX}=\Knull{X}\oplus\ZZ^n$ with $n=\sum_{i=1}^{t}(p_i-1)$. Moreover, $\euform{[E]}{[H]}=0$ whenever $E$ belongs to $\Ee$ and $H$ belongs to $\coh{X}$.\qed
\end{corollary}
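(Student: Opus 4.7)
The plan is to exploit the semi-orthogonal decomposition of $\overline{\Hh}=\coh{\XX}$ provided by the exceptional sequence $\Ee$ together with its right perpendicular category $\rperp{\Ee}=\coh{X}$, following the perpendicular-category formalism of \cite{Geigle:Lenzing:1991}. The extension closure $\wt{\Ee}$ of $\Ee$ inside $\overline{\Hh}$ is a Serre subcategory whose simple objects are precisely the members of $\Ee$; consequently $\Knull{\wt{\Ee}}$ is the free abelian group $\bigoplus_{E\in\Ee}\ZZ[E]$. On the quotient side, iterating the single-point lemma of the previous subsection yields an equivalence $\overline{\Hh}/\wt{\Ee}\iso\coh{X}$, which is realised both by the Serre quotient functor and, as a section, by the perpendicular embedding $\coh{X}=\rperp{\Ee}\hookrightarrow\overline{\Hh}$.

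Granted these structural facts, the K-theoretic decomposition follows from Quillen's localization applied to
\[
0\to\wt{\Ee}\to\overline{\Hh}\to\overline{\Hh}/\wt{\Ee}\to 0.
\]
In its truncated form this produces the exact sequence $\mathrm{K}_1(\coh{X})\to\Knull{\wt{\Ee}}\to\Knull{\XX}\to\Knull{X}\to 0$. The perpendicular embedding furnishes a section of the surjection $\Knull{\XX}\to\Knull{X}$ on the right, so the boundary map vanishes and the sequence splits. Substituting $\Knull{\wt{\Ee}}=\bigoplus_{E\in\Ee}\ZZ[E]$ gives $\Knull{\XX}=\Knull{X}\oplus\bigoplus_{E\in\Ee}\ZZ[E]$.

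For the rank formula, at each weighted point $x_i$ the tube of simple sheaves concentrated at $x_i$ has length $p_i$; exactly one of them, namely the $S_{x_i}$ with $\Hom{\OXX}{S_{x_i}}=k$, lies in $\rperp{\Ee}=\coh{X}$, whereas the remaining $p_i-1$ belong to $\Ee$. Summing over $i=1,\ldots,t$ yields $|\Ee|=n=\sum_{i=1}^{t}(p_i-1)$. Finally, the orthogonality $\euform{[E]}{[H]}=0$ for $E\in\Ee$ and $H\in\coh{X}$ is immediate from the very definition of the right perpendicular category: both $\Hom{E}{H}$ and $\Ext{1}{E}{H}$ vanish.

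The step I expect to require most care is the splitting of the localization sequence at $\Knull{\wt{\Ee}}$. The cleanest route sidesteps any direct analysis of the $\mathrm{K}_1$-boundary by invoking the explicit section supplied by the perpendicular embedding; this is precisely the reason that the perpendicular-category machinery of \cite{Geigle:Lenzing:1991} is the natural tool here rather than a bare application of Quillen's long exact sequence.
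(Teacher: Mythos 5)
Your overall strategy --- the semi-orthogonal pair $(\coh{X},\Ee)$ with $\coh{X}=\rperp{\Ee}$, the count $|\Ee|=\sum_{i=1}^{t}(p_i-1)$, and the orthogonality $\euform{E}{H}=0$ read off from the definition of the right perpendicular category --- is the same as the paper's, which deduces the corollary from \cite{Geigle:Lenzing:1991} together with Proposition~\ref{prop:RClass}. But the step you yourself flag as delicate is where the argument breaks. Quillen localization gives
\[
\mathrm{K}_1(\coh{X})\xrightarrow{\ \partial\ }\Knull{\wt{\Ee}}\xrightarrow{\ \iota\ }\Knull{\XX}\xrightarrow{\ \pi\ }\Knull{X}\to 0,
\]
and a section of $\pi$ only controls exactness at $\Knull{\XX}$: it splits $0\to\mathrm{im}\,\iota\to\Knull{\XX}\to\Knull{X}\to0$, but says nothing about exactness at $\Knull{\wt{\Ee}}$, hence does not force $\partial=0$. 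All you obtain this way is $\Knull{\XX}\cong\Knull{X}\oplus\bigl(\bigoplus_{E\in\Ee}\ZZ[E]\bigr)/\mathrm{im}\,\partial$, which is weaker than the claim. To repair the argument along your lines you must use that the perpendicular embedding is an \emph{exact functor} splitting the quotient functor, so that it induces a section on $\mathrm{K}_1$ as well; then $\mathrm{K}_1(\coh{\XX})\to\mathrm{K}_1(\coh{X})$ is surjective and $\partial=0$ by exactness one step to the left. That is a legitimate fix, but it is not what you wrote, and it costs you a higher-K-theory input that the statement does not need.

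The paper sidesteps $\mathrm{K}_1$ entirely: Proposition~\ref{prop:RClass} shows, by the purely Euler-form observation that the matrix $(\euform{E_h}{E_i})$ of an exceptional sequence is unitriangular, that the classes $[E]$, $E\in\Ee$, are linearly independent and generate a free direct summand of $\Knull{\XX}$; since $\Knull{X}$ together with these classes generates $\Knull{\XX}$ by the perpendicular-category formalism, the decomposition follows at the level of $\mathrm{K}_0$ alone. I would add that you have this more elementary tool available and should use it to certify injectivity of $\iota$. A separate minor slip: the simple sheaf $S_{x_i}$ of $\coh{\XX}$ with $\Hom{\OXX}{S_{x_i}}=k$ does \emph{not} lie in $\rperp{\Ee}$ (indeed $\Ext{1}{\tau^{p_i-1}S_{x_i}}{S_{x_i}}\neq0$); the object of $\rperp{\Ee}=\coh{X}$ sitting over $x_i$ is the uniserial sheaf of length $p_i$. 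This does not affect your count, since $\Ee$ is by definition the set of exceptional simples other than $S_{x_1},\ldots,S_{x_t}$, but the reason you give for the count is not quite right.
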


\subsection{The reduced (or numerical) Grothendieck group}

Let $\XX=(X,w)$ be a weighted projective curve. When speaking of the Grothendieck group $\Knull{\XX}=\rKnull{\coh\XX}$ we always assume that the Euler form is part of its structure. Factoring out the kernel of the Euler form, we obtain the reduced (or numerical) Grothendieck group $\rKnull{\XX}$, equipped with a non-degenerate Euler form.

\begin{lemma}
The kernel of the Euler form for $\Knull{\XX}$ is generated by the elements $u_x-u_y$, $x,y\in X$, where $u_x=\sum_{j\in \ZZ_{w(x)}}[\tau^jS_x]$ for the unique simple sheaf $S_x$ concentrated in $x$ with $\Hom{\OX}{S_x}=k$.~\qed
\end{lemma}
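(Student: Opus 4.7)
The plan is to exploit the direct sum decomposition $\Knull{\XX}=\Knull{X}\oplus\bigoplus_{E\in\Ee}\ZZ[E]$ from the preceding corollary. In this basis the Euler form has the block form
\[
C=\begin{pmatrix} C_X & N \\ 0 & C_\Ee\end{pmatrix},
\]
the lower-left block vanishing because $\euform{E}{H}=0$ for $E\in\Ee$ and $H\in\coh{X}$; since $\Ee$ is an exceptional sequence, $C_\Ee$ is triangular with ones on the diagonal and thus non-degenerate over $\ZZ$. This decomposition will allow a clean comparison of $\ker C$ with $\ker C_X$.

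The key identification is that each $u_x$ is the image in $\Knull{\XX}$ of the class of the simple sheaf of $\Hh=\coh{X}$ at $x$, pushed forward along the inclusion $j\colon\Hh\hookrightarrow\overline{\Hh}$. For unweighted $x$ one has $w(x)=1$ and $jS_x^{\Hh}=S_x$. For a weighted point $x=x_i$, the image $jS_{x_i}^{\Hh}$ is uniserial of length $p_i$ with composition factors the $\tau$-orbit of $S_{x_i}$, so its class in $\Knull{\XX}$ equals $\sum_{j\in\ZZ_{p_i}}[\tau^jS_{x_i}]=u_{x_i}$. Because $\Hh=\rperp{\Ee}$ is a right perpendicular subcategory of the hereditary category $\overline{\Hh}$, it is closed under extensions; hence $\Ext{1}{jA}{jB}=\Ext{1}{A}{B}$ for $A,B\in\Hh$, so the Euler form of $\overline{\Hh}$ restricted to the summand $\Knull{X}\subset\Knull{\XX}$ agrees with the Euler form of $\Hh$.

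A direct computation gives $\euform{u_x}{E}=0$ for every $E\in\Ee$. If $E$ is concentrated at a point different from $x$, the Euler pairing vanishes by Hom-Ext-orthogonality of differently supported simples. If $x=x_i$ and $E=\tau^kS_{x_i}$, the $\tau$-equivariance of the Euler form reduces $\euform{u_{x_i}}{\tau^kS_{x_i}}$ to $\sum_{j\in\ZZ_{p_i}}\euform{S_{x_i}}{\tau^{k-j}S_{x_i}}$; the only non-zero summands are $j=k$ (contributing $+1$ since $S_{x_i}$ is exceptional) and $j=k-1$ (contributing $-1$ since $\Ext{1}{S_{x_i}}{\tau S_{x_i}}=k$ by the $p$-cycle lemma combined with Serre duality), and these cancel. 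Together with the previous paragraph, this proves each $u_x-u_y$ lies in $\ker C$.

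For the converse inclusion, write any $a\in\ker C$ as $a=a_0+b$ with $a_0\in\Knull{X}$ and $b\in\bigoplus_{E\in\Ee}\ZZ[E]$. Pairing $a$ on the right with an arbitrary $h\in\Knull{X}$ uses only the $a_0$-component, yielding $\euform{a_0}{h}=0$; so $a_0$ lies in the kernel of the Euler form on $\Knull{X}$, and the smooth-curve lemma writes $a_0=\sum_ic_i([S_{x_i}^{\Hh}]-[S_{y_i}^{\Hh}])$. Subtracting $\sum_ic_i(u_{x_i}-u_{y_i})$ from $a$ reduces to the case $a_0=0$, so $a=b\in\bigoplus_{E\in\Ee}\ZZ[E]$; the vanishing of $\euform{b}{E'}$ for all $E'\in\Ee$ combined with the non-degeneracy of $C_\Ee$ then forces $b=0$. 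The main technical point is the $p$-cycle computation $\euform{u_{x_i}}{\tau^kS_{x_i}}=0$, which underpins both directions of the argument.
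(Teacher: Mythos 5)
Your argument is correct and is exactly the proof the paper leaves implicit behind its \verb|\qed|: it combines the semiorthogonal decomposition $\Knull{\XX}=\Knull{X}\oplus\bigoplus_{E\in\Ee}\ZZ[E]$ with the unweighted kernel lemma and the tube relations \eqref{eqn:tube}, after identifying $u_x$ with the image of $[S_x]$ under the inclusion $\coh{X}=\rperp{\Ee}\hookrightarrow\coh{\XX}$. No gaps; the two directions (membership of $u_x-u_y$ in the kernel via the telescoping cancellation in the tube, and the converse via non-degeneracy of the unipotent triangular block $C_\Ee$) are both sound.
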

 To analyze properties of the Grothendieck group $\Knull{\XX}$, or the \emph{reduced Grothendieck group} $\rKnull{\XX}=\Knull{\XX}/N$, the following proposition, going back to H{\"u}bner \cite{Huebner:1996}, compare~\cite{Lenzing:2007}, is quite useful. An exceptional sequence $E_1,E_2,\ldots,E_n$ in $\coh{\XX}$ consists of exceptional sheaves such that additionally $\Ext{j}{E_h}{E_i}=0$ holds for $h>i$ and for $j=0,1$.
\begin{proposition} \label{prop:RClass}
Each exceptional sheaf $E$ of $\coh{\XX}$ is determined by its class
$\rcl{E}$ in the reduced Grothendieck group $\rKnull{\Hh}$. In particular, $E$ is determined by its class in $\Knull{\XX}$. Moreover, an exceptional sequence in $\coh{X}$ gives rise to a linearly independent sequence $\rcl{E_1}, \rcl{E_2},\ldots,\rcl{E_n}$ in $\rKnull{\XX}$ (resp.\ $[E_1], [E_2],\ldots,[E_n]$ in $\Knull{\XX}$, generating a direct summand in $\rKnull{\XX}$ (resp.\ $\Knull{\XX}$).
\end{proposition}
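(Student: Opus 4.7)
The plan is to split the proposition into two essentially independent parts and attack them separately: first the "direct summand" assertion about exceptional sequences, which is elementary and follows from upper triangularity of the Euler form; and second the claim that an exceptional sheaf is determined by its class in $\rKnull{\XX}$, which is the deeper statement, traditionally attributed to H{\"u}bner.

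For the exceptional-sequence part, I would compute the Cartan matrix $M_{hi}=\euform{E_h}{E_i}$ of the sequence $E_1,\ldots,E_n$. Exceptionality forces $M_{ii}=\euform{E_i}{E_i}=1$, and the defining property of an exceptional sequence $\Ext{j}{E_h}{E_i}=0$ for $h>i$ and $j=0,1$ forces $M_{hi}=0$ for $h>i$. Thus $M$ is upper unitriangular over $\ZZ$, hence unimodular. Consider the map $\Phi\colon \Knull{\XX}\to \ZZ^n$ defined by $\Phi(x)=(\euform{x}{E_j})_{j=1,\ldots,n}$; it sends $[E_h]$ to the $h$-th row of $M$. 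Since the Euler form kills $N$, $\Phi$ factors through $\rKnull{\XX}$. Composing $\Phi$ with $M^{-1}\in\mathrm{GL}_n(\ZZ)$ yields a retraction of $\rKnull{\XX}$ (and of $\Knull{\XX}$) onto the subgroup generated by the $\rcl{E_i}$ (respectively the $[E_i]$), simultaneously proving $\ZZ$-linear independence and the direct summand property.

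For the assertion that $E$ is determined by $\rcl{E}$, I would argue as follows. Suppose $E,F$ are exceptional with $\rcl{E}=\rcl{F}$. Then $\euform{E}{F}=\euform{E}{E}=1$ and $\euform{F}{E}=1$, so $\Hom{E}{F}$ has dimension at least $1$; pick a nonzero morphism $f\colon E\to F$. The plan is to show that any such $f$ is forced to be an isomorphism, by excluding nontrivial kernel $K$ and cokernel $C$. Using that $\coh{\XX}$ is hereditary with Serre duality, one passes from $f$ to the short exact sequences $0\to K\to E\to \im f\to 0$ and $0\to\im f\to F\to C\to 0$, applies $\Hom{E}{-}$ and $\Hom{-}{F}$, and uses exceptionality $\End{}{E}=k$, $\Ext{1}{E}{E}=0$ together with $[K]-[C]=0$ in $\rKnull{\XX}$ to cascade dimension estimates. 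The clean way to package this is an induction on rank plus degree (or on length in the finite-length sector), passing when necessary into the right perpendicular category $\rperp{E_0}$ of a suitable exceptional simple subobject of $K$ or $C$, which is again a hereditary category with Serre duality but of strictly smaller numerical rank; the induction hypothesis then applies. For the finite-length exceptional sheaves (which exist only at weighted points, since $\tau$ acts trivially on simples at unweighted points), uniseriality from the $p$-cycle structure already gives uniqueness by length and socle, which is the base case.

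The main obstacle lies in the second claim: the Euler form only records signed dimensions, so ruling out non-invertible maps between exceptional sheaves of equal class requires a structural reduction rather than a numerical one. The technical delicacy is choosing the right complexity to induct on so that kernel/cokernel terms are strictly simpler, and verifying that the perpendicular categories inherit the hypotheses needed to restart the argument; this is exactly the content of H{\"u}bner's theorem, which I would cite in the form appearing in \cite{Lenzing:2007} rather than reprove from scratch here.
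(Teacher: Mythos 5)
Your treatment of the second assertion (linear independence and the direct summand property) is correct and is essentially the argument the paper intends: the Cartan matrix of an exceptional sequence is upper unitriangular, hence unimodular, and the map $x\mapsto(\euform{x}{E_j})_j$ factors through $\rKnull{\XX}$ and restricts to an isomorphism on the subgroup generated by the $\rcl{E_i}$, which therefore splits off.

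The first assertion, however, is where your proposal has a genuine gap: you correctly reduce to showing that a nonzero $f\colon E\to F$ between exceptional sheaves with $\rcl{E}=\rcl{F}$ must be an isomorphism, but you never actually carry this out. The proposed induction on ``rank plus degree'' with excursions into perpendicular categories is left entirely unspecified at the crucial points (why the kernel and cokernel are strictly simpler, how the induction closes), and you end by deferring to a citation of H\"ubner's theorem --- which is the very statement to be proved. The missing idea is that no induction is needed; a short direct argument with the \emph{image} $F'=\mathrm{im}\,f$ suffices. Namely: (i) if $\ker f\neq 0$, then $\Hom{F'}{E}=0$, since a nonzero $g\colon F'\to E$ composed with the epimorphism $E\twoheadrightarrow F'$ would give a nonzero, hence invertible, endomorphism of $E$ (as $\End{}{E}=k$), forcing $E\cong F'$ and contradicting $\ker f\neq0$; thus $\euform{F'}{E}=-\dim\Ext{1}{F'}{E}\le 0$. (ii) By heredity, the inclusion $F'\hookrightarrow F$ induces a surjection $0=\Ext{1}{F}{F}\twoheadrightarrow\Ext{1}{F'}{F}$, so $\euform{F'}{F}=\dim\Hom{F'}{F}\ge 0$. (iii) Since $\rcl{E}=\rcl{F}$, these two Euler products coincide, hence both vanish, giving $\Hom{F'}{F}=0$ and contradicting $f\neq 0$. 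Therefore $f$ is a monomorphism; by the symmetric argument there is a monomorphism $F\to E$, and the two composites are nonzero endomorphisms of exceptional objects, hence automorphisms, so $E\cong F$. I recommend replacing your induction sketch by this argument (or, if you do wish to cite, stating clearly that the first claim is then taken as known rather than proved).
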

\begin{proof} Let $E$ and $F$ be exceptional sheaves with
$\rcl{E}=\rcl{F}$. Then
$\euform{\rcl{E}}{\rcl{F}}=\euform{\rcl{E}}{\rcl{E}}=1$, and
hence there exists a non-zero morphism $f:E\to F$ whose kernel and
image we denote by $E'$ and $F'$, respectively. We claim that $f$ is
an isomorphism and assume, for contradiction, that $E'$ is non-zero.
The assumption implies that $\Hom{F'}{E}=0$: Otherwise there
exists a nonzero morphism $g:F'\to E$ and the composition $E\twoheadrightarrow
F'\up{g} E$ yields a non-trivial endomorphism, hence an
automorphism, of $E$. It then follows that the morphism $E\twoheadrightarrow F'$,
induced by $f$, is an isomorphism, contradicting $E'\neq0$. We have
shown that the assumption $E'\neq0$ implies $\Hom{F'}{E}=0$.
Further, since $\coh{\XX}$ is hereditary the embedding $F'\hookrightarrow F$ induces
an epimorphism $0=\Ext{1}{F}{F}\twoheadrightarrow \Ext{1}{F'}{F}$ implying
$\Ext{1}{F'}{F}=0$. We thus obtain
$\euform{\rcl{F'}}{\rcl{E}}=-\dim{}{\Ext{1}{F'}{E}}\leq0$ and
$\euform{\rcl{F'}}{\rcl{F}}=\dim{}{\Hom{F'}{F}}\geq0$. Since
the classes of $E$ and $F$ in $\rKnull{\XX}$ agree, we get
$0=\euform{\rcl{F'}}{\rcl{E}}=
\euform{\rcl{F'}}{\rcl{F}}=\dim{}{\Hom{F'}{F}}$, hence
$\Hom{F'}{F}=0$, contradicting $f\neq0$. We have thus shown that
there exists a monomorphism $f:E\to F$. Similarly, there exists a
monomorphism $g:F\to E$ yielding non-zero endomorphisms, hence
automorphisms, $f\circ g$ and $g\circ f$ of $F$ resp.\ $E$.
Therefore $f$ and $g$ are isomorphisms and the first claim follows. The last
assertion uses that $\euform{E_i}{E_i}=1$ for each $i$ and $\euform{E_j}{E_i}=0$ for $j>i$.
\end{proof}

\subsection{Attaching tubes}
An abelian group $V$, equipped with a (usually non-symmetric) bilinear form $\euform{-}{-}$ and a $\ZZ$-linear automorphism $\tau$ is called a \emph{bilinear group} if the identity $\euform{y}{x}=-\euform{x}{\tau y}$ holds for all members $x,y$ of $V$. A bilinear group $V$ is called a \emph{bilinear lattice} if $V$ is a free abelian group of finite rank and additionally the bilinear form is non-degenerate. For weighted projective lines, the process of attaching tubes from \cite{Lenzing:1996}, that is described below, has been the key technique to understand the structure of the Grothendieck group.

A $p$-tube is a pair $(T,S)$ where $T$ is a bilinear group equipped with an automorphism $\tau$ of finite period $p$, $s\in T$ satisfies $\euform{s}{s}=1$ and, moreover, the $\tau$-orbit $(\tau^js)_{s\in\ZZ_p}$ is a $\ZZ$-basis of $T$.
We further request
 \begin{equation} \label{eqn:tube}
 \euform{\tau^i s}{\tau^j s}=\begin{cases} 1 & \mbox{if $j=i$ in $\ZZ_p$} \\
                        -1 & \mbox{if $j=i+1$ in $\ZZ_p$} \\
                         0                      & \mbox{else,}
                         \end{cases}
 \end{equation}
We note that
$s_0=\sum_{j\in\ZZ_p}s_j$ lies in the kernel of $T$, that is $\euform{s_0}{-}$  so $T$ is not a bilinear lattice. Moreover, $\euform{s_0}{s_0}=0$.

Now let $V$ denote the bilinear group $\Knull{X}$ for a smooth projective curve $X$ and let $S$ be a simple sheaf on $X$. We are going to define a bilinear group $\bar{V}$ by attaching a $p$-tube $T_p$ at $[S]$. First we define a bilinear form on $V\oplus T$ by putting
\begin{enumerate}
  \item $\euform{v_1}{v_2}=\euform{v_1}{v_2}_V$ for members $v_1,v_2$ from $V$ and $\euform{u_1}{u_2}=\euform{u_1}{u_2}_T$ for members $u_1,u_2$ from $T$,
  \item \begin{equation*}
          -\euform{\tau^{j-1}s}{v}=\euform{v}{\tau^js}=\begin{cases}
                                                       \euform{v}{[S]}_V, & \mbox{if } j=1 \text{ in } \ZZ_p, v\in V\\
                                                       0, & \mbox{otherwise}.                                                     \end{cases}
        \end{equation*}

\end{enumerate}
As is easily checked, the subgroup $U$ generated by $\sum_{j\in\ZZ_p}{\tau^js}-[S]$ belongs to the kernel of the bilinear form on $V\oplus T$, thus yielding a bilinear group $\bar{V}=V\oplus T/U$. By definition, $\bar{V}$ is the \emph{bilinear group obtained by attaching a $p$-tube} in $[S]$.
Similarly, starting with the bilinear lattice $\rKnull{X}$ and the class $\rcl{S}$ of a simple sheaf on $X$ we obtain by attaching a $p$-tube to $\rcl{S}$ a bilinear lattice $\bar{V}$.

For a given sequence of weights $p_1,p_2,\ldots,p_t$ and pairwise non-isomorphic simple sheaves $S_1,S_2,\ldots, S_t$ the above construction extends to yield an extension of $\Knull{X}$, respectively $\rKnull{X}$, by simultaneously attaching a $p_i$-tube in $[S_i]$, respectively $\rcl{S_i}$, for $i=1,2,\ldots,t$.
\begin{theorem} \label{thm:InsertionOfTubes}
Let $\XX$ be a weighted projective curve, with weights $p_1,p_2,\ldots,p_t$ put in the points $x_1,x_2,\ldots,x_t$ of the smooth projective curve $X$ underlying $\XX$.  Then Grothendieck group $\Knull{\XX}$ of $\coh{\XX}$ arises from $\Knull{X}$  by inserting a $p_i$-tube $T_i$ in the class $[S_i]$ of the simple sheaf $S_i$ concentrated in $x_i$, for each $i=1,2,\ldots,t$.
\end{theorem}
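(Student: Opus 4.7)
The plan is to exhibit an explicit isomorphism from the tube-attachment construction to $\Knull{\XX}$ respecting all structure, using the decomposition supplied by the preceding corollary as the abelian group backbone.

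By that corollary we have $\Knull{\XX}=\Knull{X}\oplus\bigoplus_{E\in\Ee}\ZZ[E]$, and at each weighted point $x_i$ the set $\Ee$ contributes the $p_i-1$ non-distinguished exceptional simples $\tau^jS_i$ for $j=1,\ldots,p_i-1$, while the distinguished simple $S_i$ (characterized by $\Hom{\OX}{S_i}=k$) satisfies $[S_i]=[S_{x_i}]-\sum_{j=1}^{p_i-1}[\tau^jS_i]$ by the Jordan-Hölder identity $[S_{x_i}]=\sum_{j\in\ZZ_{p_i}}[\tau^jS_i]$ holding in $\Knull{\XX}$ (the left side being $S_{x_i}$ viewed in $\overline{\Hh}$ via the embedding from $\Hh$). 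Accordingly I would define $\Phi:\Knull{X}\oplus\bigoplus_i T_{p_i}\to\Knull{\XX}$ by inclusion on the first summand and $\tau^j s_i\mapsto[\tau^jS_i]$ on each tube. The generator $\sum_{j\in\ZZ_{p_i}}\tau^j s_i-[S_{x_i}]$ of the relation subgroup $U_i$ maps to zero, so $\Phi$ descends to a surjection from the tube-attached group; a rank count using $n=\sum_i(p_i-1)$ then forces $\Phi$ to be an isomorphism of abelian groups.

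The bulk of the work lies in verifying that $\Phi$ respects the bilinear form. On the $\Knull{X}$ summand the Euler form is preserved because the embedding $\Hh\hookrightarrow\overline{\Hh}$ is full and exact. Within each tube the values $\euform{\tau^jS_i}{\tau^jS_i}=1$ and $\euform{\tau^jS_i}{\tau^{j+1}S_i}=-1$, zero otherwise, are immediate from the $p$-cycle lemma together with exceptionality. The delicate and central part — and the main obstacle I anticipate — is the cross-term. For $v\in\Knull{X}$ and $j\in\{1,\ldots,p_i-1\}$ the orthogonality $\euform{[E]}{[H]}=0$ for $E\in\Ee$, $H\in\coh{X}$ from the preceding corollary immediately yields $\euform{\tau^jS_i}{v}=0$, and Serre duality in the form $\euform{v}{\tau^jS_i}=-\euform{\tau^{j-1}S_i}{v}$ then pinpoints $j=1$ as the unique index producing a nonzero pairing, with value $\euform{v}{[S_{x_i}]}_{\Knull{X}}$. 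Combined with the expression for $[S_i]$ above, this reproduces the tube-attachment formulas precisely; extending from $v=[\OX]$ and $v=[S_y]$ (for $y\neq x_i$) to arbitrary $v$ proceeds by linearity, using Hom-Ext-orthogonality of simples concentrated at distinct points.

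Finally, compatibility of the $\tau$-action is a short verification: inside each tube $\tau$ acts as the cyclic shift, on $\Knull{X}$ as the Coxeter transformation of $X$, and these agree modulo $U_i$ because $\tau[S_{x_i}]=[S_{x_i}]$. What makes the bookkeeping nontrivial is identifying which abstract generator $s_i$ of $T_{p_i}$ corresponds to which simple at $x_i$: the distinguished property $\Hom{\OX}{S_i}=k$ singles out $S_i$ via the right adjoint $r:\overline{\Hh}\to\Hh$ as the unique simple at $x_i$ with nonzero $\Hom$ from $\OX$, while Serre duality forces the nontrivial interaction with $\Knull{X}$ to reside at $\tau S_i$ rather than at $S_i$ itself, matching exactly the asymmetric tube convention $\euform{v}{\tau s}=\euform{v}{[S]}_V$.
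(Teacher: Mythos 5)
Your proposal takes essentially the same route as the paper: both rest on the semi-orthogonal decomposition $\wt{\coh{X},\Ee}$ and the resulting splitting $\Knull{\XX}=\Knull{X}\oplus\bigoplus_{E\in\Ee}\ZZ[E]$, after which the identification with the tube-attached group is bookkeeping that the paper declares ``straightforward''. You actually carry this bookkeeping out, and the key verifications --- the Jordan--H\"older relation $[S_{x_i}]=\sum_{j\in\ZZ_{p_i}}[\tau^jS_i]$ killing the generator of $U_i$, the in-tube values of the Euler form, and the cross-term computation via $\euform{v}{\tau^jS_i}=-\euform{\tau^{j-1}S_i}{v}$, which isolates $j=1$ and recovers $\euform{v}{[S_{x_i}]}_{\Knull{X}}$ --- are correct and are precisely the content the paper suppresses.

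One caveat on your final paragraph: the assertion that $\tau$ acts ``on $\Knull{X}$ as the Coxeter transformation of $X$'' is false if read as a statement about the restriction of $\tau_\XX$ to the summand $\Knull{X}\subseteq\Knull{\XX}$. Indeed, equation \eqref{eqn:Tau} shows that $\tau_\XX\rcl{\OXX}-\rcl{\OXX}$ involves the classes of the exceptional simples concentrated in the weighted points, so $\tau_\XX$ does not even preserve the subgroup $\Knull{X}$, whereas $\tau_X\rcl{\OX}-\rcl{\OX}=2(g-1)\rcl{S_x}$ stays inside it. This does not affect the theorem as stated, because the paper's tube-attachment construction equips $\bar V=V\oplus T/U$ only with a bilinear form (on the reduced level the compatible $\tau$ is then forced by non-degeneracy); but your map $\Phi$ is not equivariant for the piecewise-defined $\tau$ ($\tau_X$ on $V$, cyclic shift on each tube), so that paragraph should either be dropped or the automorphism on $\bar V$ redefined accordingly.
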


\begin{proof}
For each $i=1,2,\ldots t$ let $S_i$ denote the unique simple sheaf on $\XX$ concentrated in $x_i$ with the property $\Hom{\OXX}{S_i}=k$. Further let $\Ee$ denote the system of all remaining exceptional simple sheaves ordered into an exceptional sequence. We thus obtain
a semi-orthogonal decomposition $\wt{\coh{X}, \Ee}$, yielding a decomposition of $\Knull{\XX}$ into $\Knull{X}$ and the subgroup of the classes spanned by the linearly independent system of classes $E$ with $E$ from $\Ee$. From this the proof of the claim is straightforward.
\end{proof}

\begin{corollary}
The reduced Grothendieck group $\rKnull{\XX}$ is the bilinear lattice arising from $\rKnull{X}$ by inserting $p_i$-tubes in the classes $\rcl{S_i}$, $i=1,\ldots,t$.
Moreover, the Coxeter transformation, induced by $\tau$ on $\rKnull{\XX}$ yields as characteristic polynomial the Coxeter polynomial $(x-1)^2\prod_{i=1}^{t}v_{p_i}$, where $v_n=(x^n-1)/(x-1)$.
\end{corollary}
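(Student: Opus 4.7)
The plan is to derive both assertions from Theorem~\ref{thm:InsertionOfTubes} by passing to the quotient by the kernel of the Euler form, and then to read off the Coxeter polynomial from a convenient basis of $\rKnull{\XX}$.

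For the first assertion, I would check that attaching a $p$-tube commutes with reduction of the Euler form. By the lemma preceding Theorem~\ref{thm:InsertionOfTubes}, the kernel $N$ of the Euler form on $\Knull{\XX}$ is generated by the elements $u_x-u_y$, where $u_{x_i}=\sum_{j\in\ZZ_{p_i}}[\tau^j S_i]$ at a weighted point and $u_x=[S_x]$ at an unweighted point. The tube construction identifies $\sum_{j\in\ZZ_{p_i}}\tau^j s_i$ with $[S_i]\in\Knull{X}$; hence each $u_{x_i}$ coincides in $\Knull{\XX}$ with the image of the corresponding class in $\Knull{X}$ under the embedding coming from $j:\Hh\hookrightarrow\overline{\Hh}$. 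Therefore $N$ sits entirely inside the $\Knull{X}$-summand of $\Knull{\XX}$ and, after this identification, is generated precisely by the differences $[S_x]-[S_y]$ of classes of simple sheaves on $X$. By the lemma in Section~1 this is exactly the kernel of the Euler form on $\Knull{X}$. Reduction modulo $N$ therefore replaces the $\Knull{X}$-summand by $\rKnull{X}$ and leaves the tube components untouched, so $\rKnull{\XX}$ is obtained from $\rKnull{X}$ by attaching $p_i$-tubes in the common class $\rcl{S_i}=\rcl{S}$.

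For the Coxeter polynomial, I would work in the ordered basis
$$
\rcl{\OXX},\ \rcl{\tau S_i},\,\rcl{\tau^2 S_i},\,\ldots,\,\rcl{\tau^{p_i-1}S_i}\ (i=1,\ldots,t),\ \rcl{S}
$$
of $\rKnull{\XX}$, where the identification $\rcl{S_i}=\rcl{S}-\sum_{j=1}^{p_i-1}\rcl{\tau^j S_i}$ has been used to eliminate $\rcl{S_i}$. In this basis the matrix $M$ of $\tau$ has the following features: no basis column other than $\rcl{\OXX}$ contributes to the row of $\rcl{\OXX}$, so that row reduces to its diagonal entry, which equals $1$ because $\tau$ preserves rank and $\rcl{\OXX}$ is the unique basis vector of rank one; the column of $\rcl{S}$ is the standard basis vector since $\tau\rcl{S}=\rcl{S}$; and on each tube block $\{\rcl{\tau^j S_i}\}_{j=1}^{p_i-1}$, $\tau$ sends $\rcl{\tau^j S_i}\mapsto \rcl{\tau^{j+1} S_i}$ for $j<p_i-1$ and $\rcl{\tau^{p_i-1}S_i}\mapsto \rcl{S}-\sum_{k=1}^{p_i-1}\rcl{\tau^k S_i}$, so that its restriction to this block is the companion matrix of $v_{p_i}$, and distinct indices $i$ are decoupled. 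Expanding $\det(xI-M)$ first along the $\rcl{\OXX}$-row and then along the $\rcl{S}$-column produces two factors $(x-1)$, while the block-diagonal tube submatrix contributes $\prod_{i=1}^t v_{p_i}(x)$, giving the stated Coxeter polynomial.

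The main obstacle is confirming that $\tau\rcl{\OXX}$ has $\rcl{\OXX}$-coefficient exactly $1$, i.e.\ that $\tau$ is rank-preserving on $\rKnull{\XX}$. This follows from the fact that $\tau$ preserves the finite-length subcategory $\cohnull{\XX}$ and thus descends to a self-equivalence of the common Serre quotient $\coh{\XX}/\cohnull{\XX}=\mmod{\funct{X}}$ that acts as the identity on isomorphism classes and so preserves $\funct{X}$-dimension, i.e.\ the rank. The remaining verifications (that $\tau$ descends to $\rKnull{\XX}$, which amounts to $\tau$ fixing each $u_x$, and that the indicated tube block has characteristic polynomial $v_{p_i}$) are routine direct checks.
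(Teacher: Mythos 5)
Your proposal is correct, and both halves check out: the identification of the kernel $N$ of the Euler form on $\Knull{\XX}$ with the image of the kernel for $\Knull{X}$ (via $u_{x_i}=\sum_j[\tau^jS_i]$ being the image of $[S_{x_i}]$ under the split embedding $\Knull{X}\hookrightarrow\Knull{\XX}$) does show that reduction commutes with tube insertion, and your basis $\rcl{\OXX},\ \rcl{\tau^jS_i}\ (1\le j\le p_i-1),\ \rcl{S}$ has the right cardinality $2+\sum_i(p_i-1)$ and yields a matrix for $\tau$ whose determinant expansion gives $(x-1)^2\prod_i v_{p_i}$, the tube blocks being companion matrices of $v_{p_i}$. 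Where you differ from the paper is in self-containedness: the paper disposes of the first assertion as immediate from Theorem~\ref{thm:InsertionOfTubes} and, for the Coxeter polynomial, simply cites \cite{Lenzing:1996} for the fact that each tube attachment contributes a factor $v_p$; you instead carry out the determinant computation explicitly. Your observation that the diagonal entry in the $\rcl{\OXX}$-row must be $1$ because $\tau$ preserves rank (being a self-equivalence that restricts to $\cohnull{\XX}$ and hence descends to the Serre quotient) is a clean way to avoid needing the precise formula for $\tau\rcl{\OXX}$, which the paper only establishes later as equation~\eqref{eqn:Tau} in the proof of Theorem~\ref{thm:OrbifoldEulerChar}; conversely, the paper's citation-based argument is shorter and makes the inductive ``one tube at a time'' structure more visible. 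Both routes are sound; yours has the advantage of being verifiable within the paper itself.
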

\begin{proof}
For the last assertion we use that, according to \cite{Lenzing:1996}, each attachment of a $p$-tube in $\rcl{S_x}$ for a non-weighted point $x$ yields an additional factor $v_p$ for the Coxeter polynomial.
\end{proof}
We note that the weight sequence of $\XX$ can be recovered from its Coxeter polynomial. However, the genus of the underlying curve $X$ does not influence the Coxeter polynomial.
With the above notations, we put $a=\rcl{\OXX}$, $s_i=\rcl{S_i}$, $s_0=\rcl{S_x}$ for any ordinary simple sheaf from $\coh{X}$. Then the elements $a$, $(\tau^js_i)_{j\in\ZZ_{p_i}}$, and $s_0$ form a generating system for $\rKnull{\XX}$.

\subsection{Orbifold Euler characteristic and weighted Riemann-Roch}
We recall that for a smooth projective curve $X$ the \emph{Euler characteristic} (or \emph{Euler number}) $\chi_X$ is given by the expression $\chi_X=2\euform{\OX}{\OX}=2(1-g_X)$, where $g_X$ is the genus of $X$, defined as the $k$-dimension of $\Ext{1}{\OX}{\OX}$. Assume now that $\XX=X\wt{p_1,p_2,\ldots,p_t}$ is a weighted projective curve with structure sheaf $\OXX$. Then $\Ext{1}{\OXX}{\OXX}$ has dimension $g_X$, so the \emph{homological genus} does not reflect the weights. To define an Euler characteristic $\chi_\XX$ of $\XX$ that keeps notice of the weights, we use the \emph{weighted Euler form}, defined as an average of $\bar{p}=lcm(p_1,p_2,\ldots,p_t)$ twisted Euler forms:
\begin{equation}\label{eqn:AveragedEulerForm}
  \aveuform{x}{y}=\frac{1}{\bar{p}}\left( \sum_{j=0}^{\bar{p}-1}\euform{\tau^j x}{y}\right).
\end{equation}
Here, $\tau=\tau_\XX$ denotes the Auslander-Reiten translation for $\coh\XX$.
We then define the \emph{orbifold Euler characteristic} of $\XX$ as $\chi_\XX=2\aveuform{\OXX}{\OXX}$. Clearly, for unweighted smooth projective curves the orbifold Euler characteristic agrees with the ordinary Euler characteristic.

\begin{theorem}\label{thm:OrbifoldEulerChar}
Let $\XX=X\wt{p_1,p_2,\ldots,p_t}$ be a weighted projective curve with underlying smooth projective curve $X$. Then the orbifold Euler characteristic of $\XX$ is given by the expression
\begin{equation}\label{eqn:OrbifoldEulerChar}
  \chi_\XX=2\aveuform{\OXX}{\OXX}=\chi_X-\sum_{i=1}^{t}\left(1-\frac{1}{p_i}\right).
\end{equation}
\end{theorem}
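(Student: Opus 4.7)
The strategy is to evaluate $\aveuform{\OXX}{\OXX}=\frac{1}{\bar p}\sum_{j=0}^{\bar p-1}\euform{\tau^j\OXX}{\OXX}$ by exploiting the decomposition of $\rKnull{\XX}$ afforded by Theorem~\ref{thm:InsertionOfTubes}. First, using the rank-preserving, full exact embedding $\coh{X}\hookrightarrow\coh{\XX}$ realising $\coh X=\rperp\Ee$, the class $\rcl{\OXX}$ is identified with the image of $\rcl{\OX}\in\rKnull X$, so that the $j=0$ term gives $\euform{\OXX}{\OXX}=\euform{\OX}{\OX}_X=1-g_X=\frac12\chi_X$.

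Next, for $j\geq1$ the Coxeter transformation $\tau$ acts on the $\rKnull X$-summand via the $X$-Coxeter matrix from the corollary to Proposition~\ref{prop:redGrothendieckGroup} and cyclically permutes the generators of each tube $T_i$; the attachment data~\eqref{eqn:tube} couples the two through Serre duality. I would write $\tau^j\rcl{\OXX}$ in the tube-insertion basis and, by bilinearity of $\euform{-}{\OXX}$, express $\euform{\tau^j\OXX}{\OXX}$ as a sum of an $\rKnull X$-contribution (linear in the $X$-Coxeter iterate) and tube-contributions (linear in the cyclic-shifted tube generators occurring in $\tau^j\rcl{\OXX}$).

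Finally, summing over $j=0,\ldots,\bar p-1$ and using that $\bar p/p_i\in\ZZ$, the $\tau$-orbit of $\rcl{\OXX}$ visits each tube $T_i$ an integer number of times per full cycle, and the cyclic sums telescope within each tube. I expect each tube $T_i$ to contribute $-\frac12\bar p(1-1/p_i)$ to the total sum, while the $X$-part contributes $\bar p(1-g_X)$. Dividing by $\bar p$ and doubling yields the claimed identity $\chi_\XX=\chi_X-\sum_{i=1}^t(1-1/p_i)$.

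\textbf{Main obstacle.} The combinatorial telescoping inside each tube is the delicate point: tracking how much of each tube generator appears in $\tau^j\rcl{\OXX}$, and how those multiplicities couple via Serre duality and the attachment formula~\eqref{eqn:tube} to produce the clean factor $(1-1/p_i)$. A clean route is to observe that the averaging acts as a projector onto the $\tau$-invariant part after tensoring with $\mathbb{Q}$, so that the $X$-contribution and the tube-contributions decouple; one then evaluates each separately using the explicit Cartan data of Proposition~\ref{prop:redGrothendieckGroup} and equation~\eqref{eqn:tube}.
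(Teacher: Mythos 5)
Your route is genuinely different from the paper's, and as it stands it has two real gaps. For comparison: the paper avoids all tube combinatorics. It first establishes, by pairing against the generating system $a=\rcl{\OXX}$, $s_0$, $\tau^js_i$ and invoking non-degeneracy of the Euler form on $\rKnull{\XX}$, the single identity $\tau a-a=-\sum_{i=1}^{t}s_i+(t-\chi_X)s_0$; summing its $\tau$-translates gives $\tau^{\bar p}a-a=\bar p\,\delta\,s_0$ with $\delta=\sum_i(1-\frac{1}{p_i})-\chi_X$, and the Serre-duality relation $\euform{b}{a}=-\euform{a}{\tau b}$ then forces $\aveuform{a}{a}+\aveuform{a}{\tau^{\bar p}a}=0$, from which the value of $\aveuform{a}{a}$ drops out. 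The only "expansion in the tube basis" needed is the verification of the displayed identity for $\tau a-a$, which is a finite check against generators.

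The gaps in your plan: first, the shortcut you propose for the "main obstacle" --- that averaging is a projector onto the $\tau$-invariant part after tensoring with $\mathbb{Q}$, so that the $X$- and tube-contributions decouple --- is false, precisely because $\tau$ is not of finite order on $\rKnull{\XX}$: as above, $\tau^{\bar p}a=a+\bar p\,\delta\,s_0$ with $\delta\neq0$ unless $\chi_\XX=0$, so $\frac{1}{\bar p}\sum_{j}\tau^j$ is not idempotent. The linearly growing $s_0$-component of $\tau^ja$ (which your bookkeeping of "$X$-Coxeter iterate plus cyclically shifted tube generators" omits --- note that each completed tube period feeds an $s_0$ back into the $\rKnull{X}$-summand) is exactly where the content of the theorem lives; assuming decoupling begs the question. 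Second, the deferred telescoping is the entire proof on your route, and your announced intermediate values do not survive a test case: for $\PP^1\wt{2}$ one finds $\euform{\OXX}{\OXX}=1$ and $\euform{\tau\OXX}{\OXX}=2$, so $\sum_{j=0}^{1}\euform{\tau^j\OXX}{\OXX}=3$, not the value $\bar p(1-g_X)-\tfrac12\bar p(1-\tfrac12)=\tfrac32$ predicted by your claimed contributions. Carrying the direct summation out correctly yields $2\aveuform{\OXX}{\OXX}=\bar p\bigl(\chi_X-\sum_i(1-\frac{1}{p_i})\bigr)$ with the averaged form exactly as defined in \eqref{eqn:AveragedEulerForm}; the extra factor $\bar p$ is a normalization issue (it disappears if one averages with $1/\bar p^2$, equivalently treats $\aveuform{\OXX}{S_x}$ as $1/\bar p$, which is what the paper's final chain of equalities implicitly does), and any direct computation must confront it rather than aim at the displayed right-hand side term by term.
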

\begin{proof}
We work in the reduced Grothendieck group $\rKnull{\XX}$. We put $a=\rcl{\OXX}$, $s_0=\rcl{S_x}$ where $S_x$ is the simple sheaf concentrated in a non-weighted point, $s_i=\rcl{S_i}$, where $S_i$ is the simple sheaf concentrated in $x_i$ with $\Hom{\OXX}{S_i}=k$ ($i=1,2,\ldots,t$). Then we have
\begin{equation}\label{eqn:Tau}
  \tau a -a = -\sum_{i=1}^{t}s_i+(t-\chi_X)s_0,
\end{equation}
where $\chi_X=2\euform{a}{a}=2(1-g)$ is the Euler characteristic of the underlying smooth projective curve $X$. The validity of formula \eqref{eqn:Tau} is easily checked by forming for both sides the Euler product with each member of the generating system $a,s_0,\tau^j s_i$, $i=1,2,\ldots,t$ and using non-degeneracy of the Euler form on $\rKnull{\XX}$. Then, applying $\sum_{j=0}^{\bar{p}-1}\tau^j$ to \eqref{eqn:Tau} implies
\begin{equation}\label{eqn:TauUpP}
  \tau^{\bar{p}}a-a=\bar{p}\,\delta\, s_0,\text{ where }\delta=\sum_{j=1}^{t}(1-\frac{1}{p_i})-\chi_X.
\end{equation}
Finally, we obtain
\begin{equation}
  2\aveuform{a}{a}+\delta = \aveuform{a}{a}+\aveuform{a}{\tau^{\bar{p}}a} =
  \sum_{j=0}^{\bar{p}-1}\euform{\tau^ja}{a}-\sum_{j=0}^{\bar{p}-1}\euform{\tau^{(\bar{p}-1)-j}a}{a}  =0,
\end{equation}
proving the claim.
\end{proof}
Also for a weighted projective curve $\XX$ we have two ``obvious'' linear forms \emph{rank} and \emph{degree} on the Grothendieck group $\Knull{\XX}=\Knull{\coh{\XX}}$ given on coherent sheaves by the expressions
\begin{equation}\label{eqn:WeightedRankDegree}
  \rk{A}=\aveuform{A}{S_x} \text{ and } \dg{A}=\aveuform{\OXX}{A}-\aveuform{\OXX}{\OXX}\rk{A}.
\end{equation}
Again this K-theoretic rank agrees with the sheaf-theoretic rank, defined for a coherent sheaf $A$ as its length in the Serre quotient $\coh{\XX}/\cohnull{\XX}$.

\begin{theorem}[Riemann-Roch]
Assume $b,c$ are members of $\Knull{\XX}$, where $\XX=X\wt{p_1,p_2,\ldots,p_t}$ then
\begin{equation}\label{eqn:WeightedRR}
  \aveuform{b}{c}=\aveuform{\OX}{\OX}\cdot\rk{b}\cdot \rk{c}+\frac{1}{\bar{p}}\cdot\begin{vmatrix}
                                                        \rk{b} \ & \rk{c} \\
                                                        \dg{b}& \dg{c}
                                                       \end{vmatrix}.
\end{equation}
\end{theorem}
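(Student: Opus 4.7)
The plan is to reduce \eqref{eqn:WeightedRR} to a finite check on a generating system of $\rKnull{\XX}$. Both sides are $\ZZ$-bilinear on $\Knull{\XX}$ and descend to the reduced group: the averaged Euler form $\aveuform{-}{-}$ does so because $\tau$ stabilizes the kernel $N$ of the ordinary Euler form, while $\rk$ and $\dg$ are $\tau$-invariant linear forms (by reindexing the defining averages via $\tau$-equivariance $\euform{\tau X}{\tau Y}=\euform{X}{Y}$) and hence also vanish on $N$. It therefore suffices to verify the identity on pairs drawn from the generating system $\{a,\, s_0,\, \tau^j s_i : 1\le i\le t,\; j\in\ZZ_{p_i}\}$ of $\rKnull{\XX}$ furnished by Theorem~\ref{thm:InsertionOfTubes}.

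The first step is to tabulate the numerical invariants of the generators. Directly from the definitions, $\rk a=1$, $\dg a=0$, and $\rk s_0=\rk \tau^j s_i=0$. Since $\tau S_x=S_x$ at an unweighted point $x$, the average for $\aveuform{\OXX}{S_x}$ collapses to $\euform{\OXX}{S_x}=1$, giving $\dg s_0=1$. Combining $\tau$-invariance of $\dg$ with the reduced-group identity $\sum_{j\in\ZZ_{p_i}}\tau^j s_i=s_0$ (a consequence of the lemma characterizing the kernel of the Euler form) forces $\dg \tau^j s_i=1/p_i$ for every $j$.

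The second step computes the averaged pairings on pairs of generators, using four ingredients: Hom-Ext orthogonality of simples at distinct points, which kills all mixed pairings between $s_0$, $\tau^j s_i$, and $\tau^k s_{i'}$ with $i\ne i'$; the tube relations \eqref{eqn:tube}, whose $+1$ and $-1$ entries cancel over one period and force $\aveuform{\tau^j s_i}{\tau^k s_i}=0$; $\tau$-equivariance of $\euform{-}{-}$, which reduces $\aveuform{a}{\tau^j s_i}$ to $\dg\tau^j s_i$ and, combined with Serre duality, handles the opposite order; and $\aveuform{a}{a}=\aveuform{\OXX}{\OXX}$ by definition. Substituting these values into the right-hand side of \eqref{eqn:WeightedRR} reproduces the left-hand side in each of the finitely many cases, and $\ZZ$-bilinearity extends the identity to all of $\rKnull{\XX}$, hence to $\Knull{\XX}$.

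The main obstacle I expect is the normalization bookkeeping: one must reconcile the $1/\bar p$ prefactor of the average, the $\bar p/p_i$-fold multiplicity with which each weighted orbit element is hit as $k$ ranges over $\ZZ_{\bar p}$, and the fractional degrees $1/p_i$ on the tube generators, so that the determinant term $\frac{1}{\bar p}|\cdots|$ on the right matches the averaged pairing on the left in every generator-pair case. Beyond that, each case is a one- or two-line computation from the tube structure, Serre duality, and the rank/degree data established above.
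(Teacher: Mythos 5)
Your approach is exactly the paper's: the paper's entire proof is the single sentence ``passing to $\rKnull{\XX}$ one checks both sides on the generating system $a,s_0,\tau^js_i$,'' and you have correctly identified that reduction, that generating system, and the invariants needed for the check. Two points of friction remain, one minor and one substantive. The minor one: $\dg$ is not a $\tau$-invariant linear form on all of $\Knull{\XX}$ (already in the unweighted case $\dg{\tau\OX}=2g-2\neq 0$); it is $\tau$-invariant only on rank-zero classes, which happens to be the only place you use it, so your computation of $\dg{\tau^js_i}$ survives. The substantive one is the normalization issue you flag but do not resolve, and it is a genuine obstruction to closing the proof as you have set it up: with the degree read literally from \eqref{eqn:WeightedRankDegree} one gets $\dg{s_0}=1$ and $\dg{\tau^js_i}=1/p_i$, and then the check already fails on the pair $(a,s_0)$, where the left side is $\aveuform{a}{s_0}=1$ while the right side is $\frac{1}{\bar{p}}(\rk{a}\cdot\dg{s_0})=\frac{1}{\bar{p}}$. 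The resolution is that \eqref{eqn:WeightedRankDegree} is inconsistent with the paper's own Remark following the theorem, which insists the degree is normalized to be integer-valued; the intended degree is $\bar{p}$ times the literal formula, so that $\dg{s_0}=\bar{p}$ and $\dg{\tau^js_i}=\bar{p}/p_i$. With those values (and reading $\aveuform{\OX}{\OX}$ in \eqref{eqn:WeightedRR} as $\aveuform{\OXX}{\OXX}$) every generator-pair case checks out exactly as you outline. So your proof is the right one, but as written it would terminate in a contradiction rather than a verification until this normalization is pinned down.
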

\begin{proof}
Passing to the reduced Grothendieck group $\rKnull{\XX}$ one checks the equality of both sides of \eqref{eqn:WeightedRR} on the generating system $a,s_0,\tau^js_i$, $i=1,2,\ldots,t$, $j\in\ZZ_{p_i}$.
\end{proof}
\begin{remark}
The factor $1/\bar{p}$ in front of the determinant in the Riemann-Roch formula is due to the fact that we have normalized the degree in order to be always an integer. One may argue in favour of a different normalization of the degree such that simple sheaves concentrated in ordinary points always get degree one and simple sheaves concentrated in a point of weight $p$ get degree $1/p$. With this normalization the factor $1/\bar{p}$ disappears.
\end{remark}

\subsection{Impact of the Euler characteristic}
In this section we collect a number of facts showing the importance of the Euler characteristic. The following trichotomy is known for weighted projective lines and has a proof along similar lines for weighted projective curves.
\begin{theorem}
Let $\XX=X\wt{p_1,p_2,\ldots,p_t}$ and let $\chi_\XX=\chi_X-\sum_{i=1}^{t}(1-1/p_i)$ be its orbifold Euler characteristic. We have the following three cases:
\begin{enumerate}
  \item If $\chi_\XX>0$, then $X=\PP^1$ and the weight sequence is one of $\wt{p,q}$ ($p,q\geq1$), $\wt{2,2,n}$ $(n\geq2)$, $\wt{2,3,3}$, $\wt{2,3,4}$ or $\wt{2,3,5}$. In these cases $\coh{\XX}$ has tame domestic representation type.
  \item If $\chi_\XX=0$, then $\XX$ is either $\PP^1\wt{2,3,6}$, $\PP^1\wt{2,4,4}$, $\PP^1\wt{3,3,3}$, $\PP^1\wt{2,2,2,2}$ or else $\XX$ is a non-weighted smooth elliptic curve. In these cases $\coh{\XX}$ has tame tubular representation type.
  \item In all remaining cases $\XX$ has negative Euler characteristic $\chi_\XX<0$ and wild representation type.\qed
\end{enumerate}
\end{theorem}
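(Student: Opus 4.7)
The plan is to separate the statement into a purely numerical part, producing the three lists of weight data, and a representation-theoretic part, classifying the module type. Using Theorem~\ref{thm:OrbifoldEulerChar} I write $\chi_\XX = 2(1-g_X) - \sum_{i=1}^{t}(1-1/p_i)$. Since each $1-1/p_i\geq 1/2$, the hypothesis $\chi_\XX\geq 0$ forces $g_X\in\{0,1\}$. If $g_X=1$ then $\chi_X=0$ and the subtracted sum must vanish, so $t=0$; this gives the unweighted elliptic case with $\chi_\XX=0$. If $g_X=0$ then $X=\PP^1$ and $\chi_X=2$, reducing the problem to the classical Platonic/Dynkin inequality $\sum_{i=1}^t (1-1/p_i)\leq 2$ with $p_i\geq 2$. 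A routine case analysis on $t$ (automatic for $t\leq 2$; for $t=3$ equivalent to $1/p_1+1/p_2+1/p_3\geq 1$; $t=4$ forcing $(2,2,2,2)$; impossible for $t\geq 5$) produces exactly the stated lists, with strict inequality yielding case~(1) and equality yielding the genus-zero instances of case~(2).

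For the representation type I would distinguish by the genus of $X$. When $g_X=0$, the trichotomy reduces to the established classification for weighted projective lines: the weight sequences of case~(1) correspond to the domestic canonical algebras of~\cite{Geigle:Lenzing:1987}, those of case~(2) with $X=\PP^1$ to the tubular canonical algebras treated in~\cite{Lenzing:1996}, and negative orbifold Euler characteristic produces wild canonical algebras. When $g_X=1$ without weighted points I invoke Atiyah's classification of vector bundles on an elliptic curve, which exhibits $\coh{\XX}=\coh{X}$ as a tame tubular category whose tubes are parametrized by $X$ itself. For the remaining subcase, $\chi_\XX<0$ with $g_X\geq 1$, wildness is inherited from an ambient wild subcategory through the full exact inclusion $\coh{X}\hookrightarrow\coh{\XX}$ provided by the $p$-cycle construction: if $g_X\geq 2$ then $\coh{X}$ itself is wild thanks to the positive-dimensional moduli of stable bundles, while if $g_X=1$ with at least one weight $\geq 2$ a tilting argument on a suitable exceptional sequence detects a wild canonical subalgebra.

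The main obstacle is making the wildness assertion in case~(3) uniform across all genera. The Coxeter-polynomial criterion $(x-1)^2\prod_i v_{p_i}(x)$ that settles the weighted-projective-line case is blind to the genus of $X$---indeed for an unweighted curve it is simply $(x-1)^2$ regardless of $g_X$---so spectral information alone cannot separate tame from wild on the curve side. Consequently one must produce a concrete wild subcategory in each situation, either by transferring one from $\coh{X}$ when $g_X\geq 2$, or by recognizing a wild canonical algebra produced by the weighting when $g_X\leq 1$ and $\chi_\XX<0$. For a self-contained presentation I would rely on the parallel theorem for weighted projective lines together with the observation, implicit in Theorem~\ref{thm:InsertionOfTubes}, that the combinatorial tube data added by the $p$-cycle construction is the same regardless of the underlying curve, so the canonical-algebra arguments of~\cite{Geigle:Lenzing:1987} and~\cite{Lenzing:1996} adapt without essential change.
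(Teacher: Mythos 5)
The paper does not actually prove this theorem: it is stated with a \qed and the one-line remark that the trichotomy ``is known for weighted projective lines and has a proof along similar lines for weighted projective curves.'' So there is no detailed argument in the paper to compare against; your proposal is an attempt to supply what the author deliberately outsources. Your numerical half is correct and complete: the bound $1-1/p_i\geq 1/2$ forces $g_X\leq 1$, the genus-one case forces $t=0$, and the genus-zero case is exactly the Dynkin/Platonic inequality $\sum_i(1-1/p_i)\leq 2$, whose solution list you reproduce correctly (including that $t\leq 2$ always gives strict inequality and $t\geq 5$ is impossible). The reduction of cases (1) and (2) with $X=\PP^1$ to the domestic/tubular canonical algebras of Geigle--Lenzing, and of the unweighted elliptic case to Atiyah's classification, is also the intended route.

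The genuine gap is in your treatment of wildness when $g_X=1$ and at least one weight exceeds $1$. You propose that ``a tilting argument on a suitable exceptional sequence detects a wild canonical subalgebra,'' but a weighted elliptic curve admits no tilting object and no full exceptional sequence: by the paper's own corollary, $\coh{X}$ for $g_X\geq 1$ has no exceptional objects at all, so the only exceptional sheaves in $\coh{\XX}$ are (up to the usual operations) the $\sum_i(p_i-1)$ simples sitting in the tubes at the weighted points, and their right perpendicular category is $\coh{X}$ itself. Hence no canonical algebra arises as $\End{}{T}$ for a tilting object $T$, and the canonical-algebra machinery of \cite{Geigle:Lenzing:1987} and \cite{Lenzing:1996} does not transfer to this subcase. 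To close the gap you need a different device: either exhibit a concrete wild exact subcategory directly (e.g.\ a family of bricks with large self-extension spaces, built from a line bundle and the exceptional simples at a weighted point), or, over $\CC$, use the uniformization/Riemann--Hurwitz results quoted in the paper (Theorem~\ref{thm:Uniformization}) to realize $\XX$ as a quotient of a smooth curve of genus $\geq 2$ and pull back wildness from there --- noting that the latter route is not available over an arbitrary algebraically closed field, where a separate argument is required. Your observation that the Coxeter polynomial is blind to the genus, so spectral data cannot decide tameness versus wildness, is accurate and worth keeping; it correctly identifies why case (3) cannot be settled by the K-theoretic tools developed in the paper.
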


For the base field of complex numbers more specific assertions can be made, because then the trichotomy agrees with the classical division of plane geometry into  euclidean (elliptic), spherical (parabolic) and hyperbolic geometry, provided we exclude weighted projective lines $\PP^1(p,q)$ with weights $p\neq q$, $p,q\geq1$. For the arising concepts \emph{orbifold fundamental group}, \emph{universal orbifold cover} and for the proof we refer to \cite{Lenzing:2016b}.

\begin{theorem} \label{thm:Uniformization}
Let $\XX$ be a weighted projective curve over $\CC$. Then the fundamental group $\pi_1^{orb}(\XX)$ acts on the universal orbifold cover $\wtilde{\XX}$ as group of deck transformations (the members of $\Aut\XX$ commuting with $\pi:\wtilde\XX\to \XX$).  This action represents $\XX$ as orbifold quotient
  $$\XX={\wtilde{\XX}}/{\pi_1^{orb}(\XX)}.
  $$
Assuming, moreover, that $\XX$ is not isomorphic to any $\PP^1\wt{p_1,p_2}$ with $p_1\neq p_2$, we have the following trisection:
\begin{enumerate}
  \item \emph{spherical}: If $\chi_\XX>0$ then $\XX$ admits a K{\"a}hler metric of constant curvature $+1$. Further $\wtilde{\XX}=\PP^1$, $\pi_1^{orb}(\XX)$ is a finite polyhedral group and $\XX$ is one of $\PP^1\wt{n,n}$, $\PP^1\wt{2,2,n}$, or $\PP^1\wt{2,3,p}$ with $p=2,3,5$.
  \item \emph{parabolic}: If $\chi_\XX=0$ then $\XX$ admits a K{\"a}hler metric of constant curvature $0$. Further $\wtilde{\XX}=\CC$, and $\XX$ is either a smooth elliptic curve or a weighted projective line of tubular type $\wt{2,3,6}$, $\wt{2,4,4}$, $\wt{3,3,3}$ or $\wt{2,2,2,2}$.
  \item \emph{hyperbolic}: If $\chi_\XX<0$ then $\XX$ admits a K{\"a}hler metric of constant curvature $-1$, $\wtilde{\XX}=\HH$, and $\XX$ has hyperbolic type.\qed
\end{enumerate}
\end{theorem}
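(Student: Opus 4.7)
The plan is to reduce the statement to classical uniformization of Riemann surfaces together with the orbifold Gauss--Bonnet formula, using the orbifold Euler characteristic already computed in Theorem~\ref{thm:OrbifoldEulerChar}. I would first set up the complex-analytic counterpart of $\XX$: over $\CC$ every weighted projective curve is a holomorphic orbifold, whose underlying topological space is the compact Riemann surface $X$ and whose orbifold structure at each weighted point $x_i$ is that of the quotient $\Delta/\mu_{p_i}$ of a small disc by the cyclic group of $p_i$-th roots of unity. The orbifold fundamental group $\pi_1^{orb}(\XX)$ is then defined via orbifold loops (equivalently, as the deck group of the universal orbifold cover), and by standard orbifold covering theory (see \cite{Lenzing:2016b}) there exists a simply connected orbifold cover $\pi:\wtilde{\XX}\to\XX$ on which $\pi_1^{orb}(\XX)$ acts properly discontinuously and holomorphically as deck transformations with quotient $\XX$.

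Next I would argue that $\wtilde{\XX}$ is in fact a smooth simply connected Riemann surface: above each weighted point $x_i$ of order $p_i$ the local orbifold cover $\Delta\to\Delta/\mu_{p_i}$ kills the local isotropy, so simple-connectedness of $\wtilde{\XX}$ forces all isotropy on $\wtilde{\XX}$ to be trivial. The classical uniformization theorem for simply connected Riemann surfaces then yields $\wtilde{\XX}\in\{\PP^1,\CC,\HH\}$, and $\pi_1^{orb}(\XX)$ is realized as a discrete subgroup of the corresponding isometry group $\mathrm{PGL}_2$, $\mathrm{Aff}(\CC)$, or $\mathrm{PSL}_2(\RR)$, acting with compact quotient orbifold $\XX$. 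The K\"ahler metric of constant curvature $+1$, $0$, or $-1$ on $\wtilde{\XX}$ descends to an orbifold metric on $\XX$ of the corresponding curvature.

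To identify which of the three cases occurs from the sign of $\chi_\XX$, I would invoke the orbifold Gauss--Bonnet theorem: the total curvature of the descended metric on $\XX$ equals $2\pi\,\chi_\XX$, where the orbifold Euler characteristic is exactly the quantity $\chi_X-\sum_{i=1}^{t}(1-1/p_i)$ computed in Theorem~\ref{thm:OrbifoldEulerChar}. Since the sign of the total curvature is determined by the constant curvature of the model, this directly matches the three cases: $\chi_\XX>0$ forces $\wtilde{\XX}=\PP^1$, $\chi_\XX=0$ forces $\wtilde{\XX}=\CC$, and $\chi_\XX<0$ forces $\wtilde{\XX}=\HH$. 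In particular $\pi_1^{orb}(\XX)$ is finite precisely in the spherical case (since $\mathrm{PGL}_2(\CC)$ has only finite discrete subgroups with compact quotient on $\PP^1$), and these finite subgroups are exactly the classical polyhedral groups.

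Finally, to enumerate the admissible weight sequences in the first two cases, I would simply solve the diophantine equation $\chi_X-\sum(1-1/p_i)>0$, respectively $=0$, under the constraint $g_X\geq0$; excluding the anomalous $\PP^1\wt{p_1,p_2}$ with $p_1\neq p_2$ by hypothesis, a short combinatorial argument (exactly as in the classification of finite subgroups of $SO(3)$) forces $g_X=0$ and yields the listed sequences $\wt{n,n}$, $\wt{2,2,n}$, $\wt{2,3,p}$ ($p\in\{2,3,5\}$) in the spherical case, and forces either $g_X=1$ with empty weight sequence or $g_X=0$ with sequence one of $\wt{2,3,6}, \wt{2,4,4}, \wt{3,3,3}, \wt{2,2,2,2}$ in the parabolic case. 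The main obstacle is the analytic input: constructing $\wtilde{\XX}$ as a simply connected Riemann surface and matching it with the sign of $\chi_\XX$ via Gauss--Bonnet, which is why the detailed geometric proof is deferred to \cite{Lenzing:2016b}; the combinatorial and algebraic parts are then straightforward.
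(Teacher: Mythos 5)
Your proposal is correct and follows essentially the same route the paper takes: the paper offers no proof of Theorem~\ref{thm:Uniformization} beyond the remark that it rests on the Poincar\'e--Koebe uniformization theorem and the constant-curvature K\"ahler metrics on $\PP^1$, $\CC$ and $\HH$, deferring all details to \cite{Lenzing:2016b}. Your outline (orbifold covering theory, uniformization of the universal cover, Gauss--Bonnet to match the sign of $\chi_\XX$ with the model geometry, and the diophantine enumeration of weight sequences) is exactly the argument the paper is pointing to, worked out in more detail than the paper itself provides.
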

We only remark that the proof of the theorem is based on the general Riemann mapping theorem, that is, the uniformization theorem of Poincar{\'e} and Koebe~\cite{De:Saint-Gervais:2016} stating that a simply connected Riemann surface is holomorphically isomorphic to either $\PP^1$, $\CC$ or $\HH$, where $\HH$ denotes the open upper complex half-plane. It also uses that $\PP^1$, $\CC$ or $\HH$ have K{\"a}hler metrics with constant curvature $+1$, $0$ and $-1$, respectively. For further details we refer to \cite{Lenzing:2016b}.

Still assuming the base field of complex numbers, the Euler characteristic develops its full power with the Riemann-Hurwitz theorem implying, in particular, that the trisection --- discussed above --- is preserved under finite orbifold group quotients. If $G$ is a finite group of automorphisms of $\XX=(X,w)$, then there are only finitely many orbits $Gx$ in $X/G$ having a non-trivial stabilizer $G_x$ (which is necessarily cyclic). Putting $\bar{w}(Gx)=w(x)\cdot|G_x|$ defines $\XX/G=(X/G,\bar{w})$ as a weighted projective curve, called the \emph{orbifold quotient}, or just quotient, of $\XX$ by $G$. By definition the automorphism group $\Aut{\XX}$ of a weighted projective curve $\XX=(X,w)$ is the subgroup of all automorphisms of $X$ that commute with $w$.

\begin{theorem}[Riemann-Hurwitz]
Let $\XX$ be a weighted projective curve and $G$ a finite subgroup of the automorphism group $\Aut{\XX}$ of $\XX$. Then $\XX/G$ is again a weighted projective curve having Euler characteristic $\chi_{\XX/G}={\chi_\XX}/{|G|}$.\qed
\end{theorem}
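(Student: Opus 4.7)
The plan is to reduce the weighted statement to the classical Riemann-Hurwitz formula for smooth projective curves by using the closed formula for $\chi_\XX$ from Theorem \ref{thm:OrbifoldEulerChar} and then carrying out an orbit-by-orbit bookkeeping. Write $\XX=(X,w)$, let $Y=X/G$ and $\pi:X\to Y$ the quotient map. Because $G\subset\Aut{\XX}$ commutes with $w$, points in the same $G$-orbit $Gx$ carry the same weight $w(x)$ and conjugate (hence equal-order) stabilizers, so $\bar{w}(Gx):=w(x)\cdot|G_x|$ is a well-defined weight function. Since ramification points of $\pi$ and weighted points of $\XX$ are each finite in number, $\bar{w}(Gx)>1$ on only finitely many orbits, so $\XX/G=(Y,\bar{w})$ is genuinely a weighted projective curve.

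The second step is to invoke Theorem~\ref{thm:OrbifoldEulerChar} on both $\XX$ and $\XX/G$:
\begin{equation*}
\chi_\XX=\chi_X-\sum_{x:\,w(x)>1}\bigl(1-\tfrac{1}{w(x)}\bigr),\qquad
\chi_{\XX/G}=\chi_Y-\sum_{Gx:\,\bar{w}(Gx)>1}\bigl(1-\tfrac{1}{\bar{w}(Gx)}\bigr),
\end{equation*}
and to apply the classical Riemann-Hurwitz formula for $\pi:X\to Y$. Since $\pi$ is Galois with group $G$, the ramification index at $P\in X$ equals $|G_P|$, and an orbit $Gx$ contributes $(|G|/|G_x|)(|G_x|-1)=|G|(1-1/|G_x|)$ to $\sum_P(e_P-1)$. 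Thus Riemann-Hurwitz rearranges to
\begin{equation*}
\chi_X/|G|=\chi_Y-\sum_{Gx}\bigl(1-\tfrac{1}{|G_x|}\bigr).
\end{equation*}

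In the third step, using $G$-invariance of $w$ to group the weight correction on $\XX$ by orbits (each orbit $Gx$ has $|G|/|G_x|$ elements of the same weight), division by $|G|$ gives
\begin{equation*}
\chi_\XX/|G|=\chi_Y-\sum_{Gx}\bigl(1-\tfrac{1}{|G_x|}\bigr)-\sum_{Gx:\,w(x)>1}\tfrac{1}{|G_x|}\bigl(1-\tfrac{1}{w(x)}\bigr).
\end{equation*}
It remains to match this against the formula for $\chi_{\XX/G}$ orbit by orbit. A small case distinction according to whether $w(x)>1$ and whether $|G_x|>1$ reduces, in the nontrivial case, to the algebraic identity
\begin{equation*}
\bigl(1-\tfrac{1}{|G_x|}\bigr)+\tfrac{1}{|G_x|}\bigl(1-\tfrac{1}{w(x)}\bigr)=1-\tfrac{1}{w(x)|G_x|}=1-\tfrac{1}{\bar{w}(Gx)},
\end{equation*}
and the three degenerate cases are immediate. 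This gives $\chi_\XX/|G|=\chi_{\XX/G}$.

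The main obstacle is conceptual rather than computational: one must be careful that the classical Riemann-Hurwitz formula applies, which at positive characteristic requires the tameness assumption hinted at by the excerpt (``good characteristic assumed''); over $\CC$, or whenever $|G_x|$ is coprime to the characteristic for each $x$, no wild correction term intervenes. Granted this, the bookkeeping splits cleanly into a ``smooth'' contribution (from nontrivial stabilizers) and a ``weight'' contribution (from weighted points of $\XX$), and the two merge exactly into the single term $1-1/\bar{w}(Gx)$ as required.
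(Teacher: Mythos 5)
Your proof is correct. The paper states this theorem without proof (it is given with a \qed and the surrounding discussion defers to the reference on weighted projective lines and Riemann surfaces), so there is no internal argument to compare against; your reduction --- combining the closed formula of Theorem~\ref{thm:OrbifoldEulerChar} with the classical Riemann--Hurwitz formula for the Galois cover $X\to X/G$ and merging the stabilizer and weight contributions via $\bigl(1-\tfrac{1}{|G_x|}\bigr)+\tfrac{1}{|G_x|}\bigl(1-\tfrac{1}{w(x)}\bigr)=1-\tfrac{1}{w(x)|G_x|}$ --- is the standard argument and surely the intended one. Your caveat about tameness is also apt, since the theorem appears in a passage that assumes $k=\CC$, where $e_P=|G_P|$ holds without further hypothesis.
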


\subsection{Shift action, weighted  divisor group and weighted Picard group}
An early instance of weighted divisor theory is \cite[Section~7]{Lenzing:1986} which --- in retrospect --- deals with the divisor theory of weighted projective lines of tame domestic representation type, that is, of those having positive Euler characteristic. We are going to extend this treatment to weighted projective curves, in general.

Let $(X,w)$ be a weighted projective curve. By definition, the \emph{weighted divisor group}, or simply divisor group, $\Div{\XX}$ of $\XX$ is defined as the free abelian group on $X$. We first show that $\Div{\XX}$  acts on the category $\coh{\XX}$ as a group of self-equivalences. The approach is similar to Section~\ref{sect:ShiftAction} but, due to the weights, a bit more sophisticated: For each point of $x$ let $S_x$ be the unique simple sheaf on $\XX$ concentrated in $x$ which satisfies $\Hom{\OXX}{S_x}=k$. If $p_x$ is the weight of $x$, then the simple sheaves, concentrated in $x$, form a $\tau$-orbit $(\tau^jS_x)_{j\in \ZZ_{p_x}}$ of cardinality $p_x$. For each vector bundle $E$ we form the universal extension
\begin{equation}
0 \to E \up{x_E} E(x) \to  E_x \to 0,  \text{ where }E_x=\bigoplus_{j\in\ZZ_{p_x}}\Ext{1}{ \tau{}^j S_x}{E}\otimes
\tau{}^j S_x.
\end{equation}
By construction $E_x$ is a semi-simple sheaf, that is, a finite direct sum of simple sheaves.
As in Section~\ref{sect:ShiftAction} the assignment $E\mapsto E(x)$ leads to a self-equivalence $\si_x:\vect{\XX}\to\vect{\XX}$ that extends to a self-equivalence of $\coh{\XX}$ which we denote by the same symbol $\si_x$, and which is called \emph{point-shift by} $x$. Notation: $\si_xE=E(x)$ for $E$ in $\coh{\XX}$.
\begin{remark}
When extended to the bounded derived category the point shift $\si_x$ is a special instance of what is termed a \emph{tubular mutation} in \cite{Meltzer:1997} or otherwise  a \emph{spherical twist} or \emph{Seidel-Thomas twist} \cite{Seidel:Thomas:2001}.
\end{remark}
Specializing to a line bundle $L$, the point-shift construction leads to an exact sequence $0\to L \to L(x) \to L_x \to 0$, where $L_x$ is the unique simple sheaf concentrated in $x$ which satisfies $\Ext{1}{L_x}{L}=k$. We recall that all the other simple sheaves $\tau^j L_x$, $0\neq j \in \ZZ_{p_x}$, that are concentrated in $x$, satisfy $\Ext{1}{\tau^jL_x}{L}=0$.

In this section, the next section and the appendix we are going to encounter four conceptually different incarnations of the \emph{Picard group} $\Pic{\XX}$ of $\XX$. The first, and classical one, builds the Picard group as the set of all isomorphism classes of line bundles with the abelian group structure given by the tensor product of $\coh{\XX}$ discussed in the appendix (Section~\ref{sect:Appendix}). The other three incarnations are byproducts of divisor theory and just rely on the structure of $\coh{\XX}$ as a noetherian, hereditary abelian category with Serre duality.

For the second incarnation of the Picard group let $\Pic{\XX}$ be the pointed set of isomorphism classes of line bundles with base element $\OXX$.

Let $U$ be a finite length sheaf that is a homomorphic image of the structure sheaf $\OXX$. Then $U=\bigoplus_{x\in X} U_x$, where $U_x$ is either zero or else an indecomposable sheaf concentrated in $x$ with top $S_x$. We associate to $U$ the divisor $|U|=\sum_{x\in \XX}\ell(U_x)x$ in $\Div{\XX}$, where $\ell$ denotes the length. The exact inclusion $\coh{X}\hookrightarrow\coh{\XX}$ induces an equivalence of the Serre quotients $\coh{X}/\cohnull{X}$ and $\coh{\XX}/\cohnull{\XX}$. Hence the \emph{function field} $K=\funct{\XX}$ of $\XX$, defined the endomorphism ring of $\OXX$ in the Serre quotient $\coh{\XX}/\cohnull{\XX}$, is isomorphic to the function field $\funct{X}$ of $X$. For a non-zero member $f=s^{-1}u$ of $\funct{\XX}$ we define its divisor $\divi_\XX(f)$ as $|\coker{u}|-|\coker{s}|$. By definition, the \emph{divisor class group} $\Cl{\XX}$ of $\XX$ is the cokernel of the divisor map $\divi_\XX:\funct{\XX}^*\to \Div{\XX}$.

\begin{proposition}\label{prop:ActionDivisorGroup}
The divisor group $\Div{\XX}$ of $\XX$ acts on the category $\coh{\XX}$ of coherent sheaves on $\XX$, preserving its full subcategories $\cohnull{\XX}$ of finite length, $\vect{\XX}$ of vector bundles and $\Ll(\XX)$ of line bundles on $\XX$.

The action of the group $\Div{\XX}$ on the set $\Pic{\XX}$ of isomorphism classes of line bundles is transitive and the assignment $D\mapsto \OXX(D)$ induces a bijection $\Cl{\XX}\to \Pic{\XX}$.
\end{proposition}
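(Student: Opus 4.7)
The plan is to assemble the proposition from three pieces: the group action on $\coh{\XX}$, transitivity on $\Pic{\XX}$, and the identification with $\Cl{\XX}$.

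For the action itself, the self-equivalences $\si_x$ just constructed assemble into a homomorphism $\Div{\XX}\to\Aut{\coh{\XX}}$ once we check $\si_x\si_y\cong\si_y\si_x$ for $x\neq y$, which follows from the Hom-Ext-orthogonality of simples at distinct points (a property unaffected by weights). Preservation of $\cohnull{\XX}$ is immediate since $\si_x$ acts as the identity on finite-length sheaves. Preservation of $\vect{\XX}$ uses the defining sequence $0\to E\to E(x)\to E_x\to 0$ together with the universality of the extension, which forces the connecting map $\Hom{\tau^j S_x}{E_x}\to\Ext{1}{\tau^j S_x}{E}$ to be an isomorphism; this ensures $\Hom{S}{E(x)}=0$ for every simple $S$ whenever the corresponding vanishing holds for $E$. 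Preservation of $\Ll(\XX)$ then follows because $\rk{E(x)}=\rk{E}$.

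For transitivity on $\Pic{\XX}$, I would fix line bundles $L_0,L$ and a non-weighted point $y\in X$. A direct computation in $\rKnull{\XX}$ gives $\euform{L_0}{L(ny)}=\euform{L_0}{L}+n$, which is eventually positive; since $\dim_k\Hom{}{}\geq\euform{}{}$ always, this yields a nonzero morphism $f:L_0\to L(ny)$ for $n$ sufficiently large. Since $L_0$ and $L(ny)$ are line bundles, $f$ is automatically a monomorphism with finite-length cokernel $V=\bigoplus_{z\in X}V_z$, each $V_z$ supported at $z$. Setting $\ell_z=\ell(V_z)$ and $D=\sum_z\ell_z\cdot z\in\Div{\XX}$, the key claim is $L_0(D)\cong L(ny)$, from which $L\cong L_0(D-ny)$ follows by applying $\si_y^{-n}$.

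The key claim reduces, via the fact that a line bundle on $\XX$ is determined up to isomorphism by its class in $\Knull{\XX}$ (the weighted analog of Proposition~\ref{prop:LineBundleClass}), to checking $V_z\cong L_0(\ell_z z)/L_0$ at every $z$. Applying $\Hom{\tau^j S_z}{-}$ to $0\to L_0\to L(ny)\to V\to 0$ and using $\Hom{S}{L_0}=0=\Hom{S}{L(ny)}$ for every simple $S$ embeds $\Hom{\tau^j S_z}{V_z}$ into $\Ext{1}{\tau^j S_z}{L_0}$. The latter is one-dimensional precisely for the unique $j$ with $\tau^j S_z=(L_0)_z$, so the socle of $V_z$ is isotypic of type $(L_0)_z$ with multiplicity at most one; combined with $V_z\neq 0$ this forces $V_z$ to be indecomposable uniserial with socle $(L_0)_z$. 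The same description applies to $L_0(\ell_z z)/L_0$, viewed via the chain $L_0\subset L_0(z)\subset\cdots\subset L_0(\ell_z z)$, and since indecomposable uniserials in the tube at $z$ are determined by socle and length, the two sheaves coincide. For the final bijection $\Cl{\XX}\to\Pic{\XX}$, surjectivity is the transitivity just proved; injectivity, i.e.\ $\OXX(D)\cong\OXX$ implies $D\in\divi_\XX(\funct{\XX}^*)$, follows from the weighted divisor sequence (built verbatim as in Theorem~\ref{thm:DivisorSequence}, using $\funct{\XX}=\funct{X}$). The main obstacle is the socle identification at weighted points: in the unweighted setting of Proposition~\ref{prop:DivisorGroup} every simple at $z$ coincides, so this step is automatic; at a weighted $z$ the tube carries $p_z$ distinct indecomposable uniserials of each length, and only the connecting-map computation above pinpoints which one arises as $V_z$.
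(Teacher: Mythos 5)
The genuine gap is in your final step. Injectivity of $\Cl{\XX}\to\Pic{\XX}$ is precisely the statement that the stabilizer of $[\OXX]$ in $\Div{\XX}$ equals the image of $\divi_\XX$, and this does not follow ``verbatim'' from Theorem~\ref{thm:DivisorSequence}: in the weighted case the Quillen localization sequence lives on $\Knull{\cohnull{\XX}}$, which is strictly larger than $\Div{\XX}$ (it is free on all simple sheaves, hence of rank $w(x)$ over each weighted point $x$, while $\Div{\XX}$ has rank one there), so exactness of the localization sequence at $\Knull{\cohnull{\XX}}$ does not transfer to exactness of the weighted divisor sequence at $\Div{\XX}$. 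Worse, in the paper the weighted divisor sequence is a \emph{corollary} of the present proposition, so invoking it here is circular. The missing argument is short and direct, and is what the paper actually does: write $D=D_{+}-D_{-}$ with $D_{\pm}$ effective; then $\OXX(D)\iso\OXX$ gives $\OXX(-D_{+})\iso\OXX(-D_{-})$ and two exact sequences $0\to\OXX(-D_{+})\up{u}\OXX\to U\to0$ and $0\to\OXX(-D_{-})\up{s}\OXX\to V\to0$ with $|U|=D_{+}$, $|V|=D_{-}$, so that $f=s^{-1}u\in\funct{\XX}^*$ satisfies $\divi_\XX(f)=D_{+}-D_{-}=D$.

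Your transitivity argument is correct but takes a different route from the paper's. You identify the whole cokernel $V=\bigoplus_z V_z$ at once via the socle computation and then appeal to the fact that a line bundle is determined by its class in $\Knull{\XX}$ --- a result the paper only establishes in the appendix using the tensor and determinant structure. The paper instead inducts on the length of $V$, peeling off one simple subobject $S$ at a time: the extension $0\to L\to L'\to S\to0$ is non-split because the line bundle $L'$ has no simple subobjects, which forces $S$ to be the distinguished simple at its point $z$ with $\Ext{1}{S}{L}=k$ and hence $L'\iso L(z)$. That is the same socle observation you make, but packaged so that no $\Knull{\XX}$-class argument is needed; your version works but imports the appendix corollary as an extra ingredient. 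Finally, a small inaccuracy in your first paragraph: $\si_x$ is \emph{not} the identity on all of $\cohnull{\XX}$ --- it fixes the tubes at points $y\neq x$ but acts as a nontrivial self-equivalence of the tube concentrated at $x$ itself; preservation of $\cohnull{\XX}$ of course still holds, since any self-equivalence preserves finite length.
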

\begin{proof}
For the first statement, we refer to \cite[Section 10.3]{Lenzing:2007} and \cite[Section 3]{Lenzing:Pena:1999}.
We are next going to show that any two line bundles $L$, $\bar{L}$ belong to the same $\Div{\XX}$-orbit. Let $x_0$ be an ordinary point, then $\euform{L}{\bar{L}(nx_0)}>0$ for large $n$. Accordingly, we obtain an exact sequence $0\to L \to \bar{L}(nx_0)\to U\to 0$ with $U$ having finite length $\ell$. If $\ell=0$, we are done; otherwise we choose a simple subobject $S$ of $U$ which yields a line bundle $L'$ with $L\subset L'\subseteq \bar{L}$ together with an exact sequence $0\to L \to L' \to S \to 0$. If $S$ is concentrated in $x$, then $L'=L(x)$ and $\bar{L}/L(x)$ has length $\ell-1$. By induction $L(x)$ and $\bar{L}$ belongs to the same $\Div{\XX}$-orbit showing that the action of $\Div{\XX}$ on $\Pic{\XX}$ is transitive.

It remains to show that the stabilizer group of divisors $D$ with $\OXX(D)=\OXX$ agrees with the image of the divisor map $\divi_\XX$: Let $D$ a divisor with $\OXX(D)=\OXX$. We write $D$ as $D_{+}-D_{-}$, where $D_{+}=\sum_{x\in X}n_x x$ (resp.\ $D_{-}=\sum_{x\in X}m_x x$) with all $n_x,m_x\geq0$. The assumption $\OXX(D)=\OXX$ translates into $\OXX(-D_{+})=\OXX(-D_{-})$ yielding short exact sequences $0\to \OXX(-D_{+})\up{u} \OXX \to U \to 0$ and $0\to \OXX(-D_{-})\up{s}\OXX\to V \to 0$. This yields a member $f=s^{-1}u$ of $\funct{\XX}^*$ with divisor $\divi_\XX(f)=|U|-|V|=D_{+}-D_{-}=D$.
\end{proof}
Using the bijection $\Cl{\XX}\to\Pic{\XX}$, $[D]\mapsto \OXX(D)$, to equip the pointed set $\Pic{\XX}$ with the structure of an abelian group, we obtain the second incarnation for the Picard group. Moreover, this leads to the weighted divisor sequence of $\XX$:
\begin{corollary}[Weighted divisor sequence]
The sequence
\begin{equation}\label{eqn:WeightedDivisorSequence}
1 \to k^*\to k(\XX)^* \up{\divi_\XX} \Div{\XX} \up{\varphi} \Pic{\XX} \to 0
\end{equation}
is exact and identifies $\Pic{\XX}$ with the divisor class group of $\XX$.~\qed
\end{corollary}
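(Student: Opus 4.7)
The plan is to verify exactness at each term separately, handling the right half of the sequence via Proposition~\ref{prop:ActionDivisorGroup} and the left half by reducing to the unweighted divisor sequence (Theorem~\ref{thm:DivisorSequence}). Three of the four exactness conditions are straightforward: the inclusion $k^*\hookrightarrow k(\XX)^*$ is injective; the composition $\divi_\XX\circ(\mathrm{incl.})$ vanishes on $k^*$ because $\alpha\in k^*$ is realised by the automorphism $\alpha\cdot\mathrm{id}_{\OXX}$ with zero cokernel; and both surjectivity of $\varphi$ and exactness at $\Div{\XX}$ are already contained in Proposition~\ref{prop:ActionDivisorGroup} (via the transitivity of the $\Div{\XX}$-action on $\Pic{\XX}$ and the stabiliser computation identifying $\{D:\OXX(D)\cong\OXX\}$ with the image of $\divi_\XX$). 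Thus the only nontrivial inclusion left is $\ker\divi_\XX\subseteq k^*$.

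For this I would exploit the identification $k(\XX)=k(X)$ coming from the equivalence of Serre quotients $\coh{\XX}/\cohnull{\XX}\cong\coh{X}/\cohnull{X}$. Any $f\in k(\XX)^*$ then admits a representation $f=s^{-1}u$ with morphisms $u,s\colon L'\to\OX$ \emph{already in $\coh{X}$}. Since the embedding $\coh{X}\hookrightarrow\coh{\XX}$ is exact, $\coker{u}$ and $\coker{s}$ are the same objects in either category, living in $\cohnull{X}\subseteq\cohnull{\XX}$. The idea is to compute $\divi_\XX(f)$ from this representation and compare it with $\divi_X(f)$.

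The pivotal input is a length formula under the $p$-cycle embedding. The class identity $[S_x]=\sum_{j\in\ZZ_{p_x}}[\tau^j S_x]$ recorded by the tube-insertion Theorem~\ref{thm:InsertionOfTubes} (with the left-hand $S_x$ denoting the $\coh{X}$-simple and the right-hand $S_x$ the distinguished $\coh{\XX}$-simple at $x$) shows that a torsion simple of $\coh{X}$ concentrated at a weighted point $x$ of weight $p_x$ acquires a composition series of length $p_x$ once regarded in $\coh{\XX}$, running through all $p_x$ simples of the tube at $x$. By additivity one gets $\ell_\XX(U_x)=p_x\cdot\ell_X(U_x)$ for every $U\in\cohnull{X}$, and applied to $\coker{u}$ and $\coker{s}$ this yields
\begin{equation*}
\divi_\XX(f)\;=\;\sum_{x\in X}p_x\cdot\bigl(\divi_X(f)\bigr)_x\cdot x.
\end{equation*}
Since each $p_x\geq 1$, the vanishing $\divi_\XX(f)=0$ forces $\divi_X(f)=0$, and Theorem~\ref{thm:DivisorSequence} then delivers $f\in k^*$.

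The main obstacle is twofold. First, one must know that $\divi_\XX(f)$ is independent of the chosen fraction representation of $f$, so that the $\coh{X}$-representation may be used without loss of generality; this follows from the weighted analogue of the Quillen localisation sequence $0\to\cohnull{\XX}\to\coh{\XX}\to\mmod{k(\XX)}\to 0$, parallel to the discussion preceding Theorem~\ref{thm:DivisorSequence}. Second, the length identity $\ell_\XX(j(S_x))=p_x$ at weighted points is the technical heart of the argument and rests on the $p$-cycle description of the embedding $\coh{X}\hookrightarrow\coh{\XX}$; once this is in place, the weighted divisor sequence reduces transparently to its unweighted counterpart.
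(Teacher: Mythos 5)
Your proposal is correct and follows essentially the route the paper intends: the corollary is stated with no separate proof because exactness at $\Div{\XX}$ and surjectivity of $\varphi$ are exactly the transitivity and stabiliser statements of Proposition~\ref{prop:ActionDivisorGroup}, which is also how you handle the right half. The extra work you do for exactness at $k(\XX)^*$ --- representing $f$ by a fraction in $\coh{X}$ and using $\ell_\XX(j(S_x))=p_x$ to get $\divi_\XX(f)=\iota(\divi_X(f))$ with $\iota\colon\Div{X}\to\Div{\XX}$ injective, then invoking Theorem~\ref{thm:DivisorSequence} --- is precisely the commutative-diagram argument the paper records later in the proof of Theorem~\ref{thm:UniversalFractions}, so it is a legitimate filling-in rather than a different method.
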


We next introduce a third incarnation of the Picard group by defining
the \emph{Picard group} $\Pic{\XX}$ of $\XX$ as the subgroup of $\Aut{\coh\XX}$, the group of isomorphism classes of self-equivalences of $\coh{\XX}$, which is generated by all (isomorphism classes of) point-shifts $\si_x$,  $x\in X$. We note that $\Aut{\XX}$, defined as the subgroup of automorphisms of $X$ commuting with the weight function, can be identified with the subgroup of (isomorphism classes) of self-equivalences of $\coh{\XX}$, that are fixing the structure sheaf $\OXX$. This follows as in \cite{Lenzing:Meltzer:2000}.

Since the shift actions $\si_x$, $\si_y$ commute for each pair $x,y$ of points from $X$, the Picard group is commutative. Its role is clarified by the following statement.
\begin{proposition} \label{prop:AutomorphismGroup}
The group $\Aut{\coh\XX}$ is generated by its subgroups $\Aut{\XX}$ and $\Pic{\XX}$. More precisely, $\Aut{\coh\XX}=\Aut{\XX}\ltimes\Pic{\XX}$.
\end{proposition}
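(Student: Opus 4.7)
The plan is to use the classical strategy for such semidirect-product decompositions: organize self-equivalences of $\coh{\XX}$ according to what they do to the structure sheaf $\OXX$, and separate off the ``point-shift'' contribution so that what remains genuinely fixes $\OXX$ and hence (by the identification already made in the excerpt, following \cite{Lenzing:Meltzer:2000}) lies in $\Aut{\XX}$. Throughout, elements of $\Aut{\coh\XX}$ are isomorphism classes of self-equivalences, so equalities are up to natural isomorphism.

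First I would prove the factorization. Given $\varphi\in\Aut{\coh\XX}$, the object $L:=\varphi(\OXX)$ is a line bundle, since self-equivalences preserve the classes of vector bundles and line bundles (being characterized by intrinsic categorical properties: no simple subobject, rank one). By the transitivity statement in Proposition~\ref{prop:ActionDivisorGroup}, there is a divisor $D\in\Div{\XX}$ with $\OXX(D)\cong L$, that is, $\si_D(\OXX)\cong\varphi(\OXX)$. Hence $\psi:=\si_D^{-1}\circ\varphi$ fixes $\OXX$ up to isomorphism, and by the identification of $\Aut{\XX}$ with the stabilizer of $\OXX$ inside $\Aut{\coh\XX}$, we have $\psi\in\Aut{\XX}$. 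Therefore $\varphi=\si_D\circ\psi\in\Pic{\XX}\cdot\Aut{\XX}$, showing the product map $\Aut{\XX}\times\Pic{\XX}\to\Aut{\coh\XX}$ is surjective.

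Next I would verify $\Aut{\XX}\cap\Pic{\XX}=\{1\}$. Any common element is simultaneously a shift $\si_D$ and fixes $\OXX$, so $\OXX(D)\cong\OXX$. By the weighted divisor sequence of Proposition~\ref{prop:ActionDivisorGroup}, $D=\divi_\XX(f)$ for some $f\in\funct{\XX}^*$; I then need that any principal $D$ yields $\si_D\cong\mathrm{id}$ as functors on $\coh{\XX}$. For this, fix an iso $\OXX(D)\cong\OXX$; tensoring with an arbitrary line bundle $L$ (using the tensor structure recalled in the appendix) gives $L(D)\cong L$, and since both $\si_D$ and $\mathrm{id}$ are exact self-equivalences acting trivially on $\cohnull\XX$ and sending any line bundle to an isomorphic one, one extends the natural isomorphism across $\coh\XX$ by the standard argument (every coherent sheaf sits in an exact sequence built from line bundles and finite-length sheaves, cf.\ Theorem~\ref{thm:CohX}(3),(6)). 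Thus the intersection is trivial.

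To upgrade the internal product to a semidirect product $\Aut{\XX}\ltimes\Pic{\XX}$ it remains to show $\Pic{\XX}$ is normal in $\Aut{\coh\XX}$. For this I would show that for $\alpha\in\Aut{\XX}$ and a generator $\si_x$ of $\Pic{\XX}$, one has $\alpha\circ\si_x\circ\alpha^{-1}\cong\si_{\alpha(x)}$. This reduces to the following natural assertion: if $E\mapsto E(x)$ is defined via the universal extension $0\to E\to E(x)\to E_x\to 0$ with $E_x$ built from the simples in the $\tau$-orbit of $S_x$, then applying the self-equivalence $\alpha$, which permutes simple sheaves according to its action on points and commutes with $\tau$, turns this extension into the analogous universal extension at $\alpha(x)$; by uniqueness of the universal extension we get $\alpha\circ\si_x\circ\alpha^{-1}\cong\si_{\alpha(x)}$. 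Combined with the previous two paragraphs this gives $\Aut{\coh\XX}=\Aut{\XX}\ltimes\Pic{\XX}$.

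The part I expect to be most delicate is the faithfulness step in the trivial-intersection argument, namely promoting the object-level isomorphism $\OXX(D)\cong\OXX$ (for $D$ principal) to a \emph{natural} isomorphism $\si_D\cong\mathrm{id}$ of functors on all of $\coh{\XX}$. The cleanest way is via the tensor product/Picard-group identification of the appendix, but writing the verification purely in the $p$-cycle framework requires some care with the coherence of the chosen isomorphisms across the universal extensions defining $\si_x$.
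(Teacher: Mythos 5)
Your proof is correct and follows essentially the same route as the paper's: both rest on the transitivity of the shift action on (isomorphism classes of) line bundles together with the identification of $\Aut{\XX}$ as the stabilizer of $\OXX$. You are more explicit about the trivial-intersection and normality steps (which the paper leaves implicit, instead invoking that every self-equivalence of the line-bundle subcategory extends uniquely to $\coh{\XX}$ — the same device that resolves the faithfulness issue you flag as delicate), but the underlying argument is the same.
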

\begin{proof}
Let $\Ll(\XX)$ be denote the full subcategory of $\coh{\XX}$ formed by all line bundles. Since automorphisms of $\coh{\XX}$ preserve the rank, the action of $G=\Aut{\coh{\XX}}$ restricts to an action of $G$ on $\Ll(X)$. This action is transitive (up to isomorphism). Moreover, the stabilizer group of $\OXX$ is just $\Aut{\XX}$. This proves that $\Aut{\Ll(X)}$ is generated by $\Pic{\XX}$ and $\Aut{\XX}$. Now, each self-equivalence of $\Ll(\XX)$ extends uniquely to a self-equivalence of $\coh{\XX}$, implying the claim on the structure of $\Aut{\coh{\XX}}$.
\end{proof}
The Picard group is the dominant part of $\Aut{\coh{\XX}}$, since in most cases $\Aut{\XX}$ is finite. Indeed only for an unweighted elliptic curve or for a weighted projective line $\XX$ with at most two weighted points, the group $\Aut{\XX}$ is infinite:
weighted projective lines with at least three weighted points have only finitely many automorphisms~\cite{Lenzing:Meltzer:2000}. Also, for a point of a smooth elliptic curve $X$, there are only finitely many automorphisms of $X$ that are fixing $x$, implying that elliptic curves that are weighted non-trivially have a finite automorphism group. Finally we use that by Harnack's theorem the automorphism group of a smooth projective curve $X$ of negative Euler characteristic is always finite.

Next, we discuss the relationship between the divisor groups $\Div{X}$ and $\Div{\XX}$ respectively the relationship between their class groups $\Cl{X}$ and $\Cl{\XX}$, that is of the Picard groups $\Pic{X}$ and $\Pic{\XX}$.

Let $H$ be an abelian group together with a sequence $(h_1,h_2,\ldots,h_t)$ of members of $H$ and a corresponding sequence $(p_1,p_2,\ldots,p_t)$ of positive integers. Let further $(e_1,\ldots,e_t)$ be the standard basis of $\ZZ^t$ and $U$ be the subgroup of $H\oplus \ZZ^t$ generated by the elements $p_ie_i-h_i$. We denote the factor group $\bar{H}=(H\oplus\ZZ^t)/U$ by $\bar{H}=H[\frac{h_1}{p_1},\frac{h_2}{p_2},\ldots,\frac{h_t}{p_t}]$, where $\frac{h_i}{p_i}$ denotes the coset $Ue_i$ from $\bar{H}$, and call $\bar{H}$ the group obtained from $H$ by \emph{adjoining the universal fractions $\frac{h_i}{p_i}$}, $i=1,2,\ldots,t$.

\begin{theorem}[Universal fractions] \label{thm:UniversalFractions}
Let $\XX=(X,w)$ be a weighted projective curve and $x_1,x_2,\ldots,x_t$ its weighted points with $p_i=w(x_i)$. Then $\Div{\XX}$ (respectively $\Cl{\XX}$) arises from $\Div{X}$ (resp.\ from $\Cl{\XX}$) by adjoining the universal fractions $\frac{x_i}{p_i}$, $i=1,\ldots,t$. Accordingly, also $\Pic{\XX}$ arises from $\Pic{X}$ by adjoining universal fractions.
\end{theorem}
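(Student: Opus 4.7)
The statement decomposes into assertions for $\Div{\XX}$, $\Cl{\XX}$ and $\Pic{\XX}$; my plan is to reduce all three to a single length computation at the weighted points, after which everything is formal. For the divisor groups, introduce the injective homomorphism $\phi:\Div{X}\to\Div{\XX}$ defined by $\phi(x)=w(x)\cdot x$, which is the identity on unweighted points and sends a weighted point $x_i$ to $p_i\cdot x_i$. Extending $\phi$ by the rule $x_i/p_i\mapsto x_i$ produces a map
\begin{equation*}
\Phi:\Div{X}[x_1/p_1,\ldots,x_t/p_t]\longrightarrow\Div{\XX},
\end{equation*}
well-defined because the defining relation $p_i(x_i/p_i)=x_i$ on the source matches $p_i\cdot x_i=\phi(x_i)$ on the target. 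Since the source, after elimination of the weighted $x_i$ in favour of $e_i=x_i/p_i$, is free abelian on $(X\setminus\{x_1,\ldots,x_t\})\sqcup\{e_1,\ldots,e_t\}$, the correspondence $e_i\leftrightarrow x_i$ exhibits $\Phi$ as an isomorphism.

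The substance of the theorem lies in the $\Cl$ statement, which amounts to commutativity of
\begin{equation*}
\xymatrix{
k(X)^{*} \ar@{=}[d] \ar[r]^{\divi_X} & \Div{X} \ar[d]^{\phi} \\
k(\XX)^{*} \ar[r]^{\divi_\XX} & \Div{\XX}.
}
\end{equation*}
Writing $f=s^{-1}u$ with $u,s$ morphisms from a line bundle into $\OXX$, exactness of the inclusion $j:\coh{X}\hookrightarrow\coh{\XX}$ identifies $\coker_\XX u=j(\coker_X u)$, so commutativity reduces to the length identity
\begin{equation*}
\ell_\XX(jM)_{x}=w(x)\cdot\ell_X(M)_{x}\qquad\text{for every }M\in\cohnull{X}\text{ and every }x\in X,
\end{equation*}
which by linearity in composition factors comes down to $\ell_\XX(j\,S_{x_i}^X)=p_i$ at each weighted point. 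I would derive the latter by applying $j$ to the exact sequence $0\to\OX\to\OX(x_i)\to S_{x_i}^X\to 0$ of $\coh{X}$ together with the identification $j(\OX(x_i))=\OXX(p_i\cdot x_i)$: the latter asserts that the single shift by $x_i$ in $\coh{X}$ corresponds to the $p_i$-fold iterated point-shift $\sigma_{x_i}^{p_i}$ in $\coh{\XX}$, which is the defining feature of the $p$-cycle construction at a weighted point. One then reads off $j\,S_{x_i}^X=\OXX(p_i\cdot x_i)/\OXX$, a uniserial object of length $p_i$ whose composition factors form the full $\tau$-orbit of $p_i$ exceptional simples at $x_i$.

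Once the commutative square is in hand, the universal-fractions construction commutes with quotients by subgroups of the base group, so taking cokernels of the horizontal divisor maps gives $\Cl{\XX}=\Cl{X}[x_1/p_1,\ldots,x_t/p_t]$. For the Picard groups, the weighted divisor sequence of the preceding corollary supplies $\Pic{\XX}\iso\Cl{\XX}$, the classical divisor sequence supplies $\Pic{X}\iso\Cl{X}$, and functoriality of the adjoining-fractions construction in the base group transports the description from $\Cl$ to $\Pic$. The main obstacle is the identification $j(\OX(x_i))=\OXX(p_i\cdot x_i)$ and the resulting length $\ell_\XX(jS_{x_i}^X)=p_i$; once that is secured, everything else is formal manipulation of universal properties and quotients.
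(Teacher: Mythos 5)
Your proposal is correct and follows essentially the same route as the paper: both proofs hinge on the exact embedding $j:\coh{X}\hookrightarrow\coh{\XX}$ sending $S_x$ to the indecomposable sheaf of length $w(x)$ at $x$, so that $\Div{X}\to\Div{\XX}$ becomes $x\mapsto w(x)\cdot x$ while the divisor maps from the common function field are compatible; the paper then concludes via a commutative diagram with exact rows and columns (with vertical cokernels $\prod_i\ZZ_{p_i}$), which is the same formal step you package as ``adjoining fractions commutes with quotients.'' Your derivation of the key length identity from $j(\OX(x_i))=\OXX(p_i x_i)$ just makes explicit what the paper records as an observation about the $p$-cycle construction.
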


\begin{proof}
We interpret $\coh{X}$ as the full exact subcategory of $\coh{\XX}$ obtained as the right perpendicular category to the system $\Ee$ of exceptional sheaves $(\tau^j S_{x_i})$, $j=1,\ldots,p_i-1$, $i=1,\ldots,t$, compare \cite{Geigle:Lenzing:1991}. We observe that each simple sheaf $S_x$ of $\coh{X}$ under this full, exact embedding $j:\coh{X}\to \coh{\XX}$ is sent to the unique indecomposable sheaf $U_x$ of $\coh{\XX}$ that is concentrated in $x$, has length $w(x)$ and satisfies $\Hom{\OXX}{U_x}=k$. Further, the structure sheaf $\OX$ of $\coh{X}$, when viewed as a member of $\coh{\XX}$ takes the role of the structure sheaf there (notation: $\OXX$). Passing to divisor groups $j$ induces an inclusion $\Div{X}\to \Div{\XX}$, $\sum_{x\in X}n_x x\mapsto \sum_{x\in X}w(x)n_x x$ for the divisor groups. As is easily checked, the following diagram with exact rows is commutative and has also exact columns
\begin{equation*}
  \xymatrix@R12pt@C18pt{
       &                          &0\ar[d]                 &0\ar[d]&\\
0\ar[r]&  {K^*/k^*}  \ar[r]^{\divi_X}\ar@{=}[d] &\Div{X}\ar[d] \ar[r] &  \Cl{X}\ar[d]\ar[r]&0\\
0\ar[r]&  K^*/k* \ar[r]^{\divi_\XX} &\Div{\XX}\ar[d]\ar[r] &  \Cl{\XX}\ar[d]\ar[r]&0\\
&&\prod_{i=1}^{t}\ZZ_{p_i}\ar[d]\ar@{=}[r]    &\prod_{i=1}^{t}\ZZ_{p_i}\ar[d]&\\
&&   0                                  & 0 &
  }
\end{equation*}
 which immediately implies the claim.
\end{proof}
\begin{example}
If $\XX$ is a weighted projective line $\PP^1\wt{p_1,p_2,\ldots,p_t}$ then the Picard group of $\XX$, written additively, is obtained by adjoining universal fractions as $\Pic{\XX}=\ZZ[\frac{1}{p_1},\frac{1}{p_2},\ldots,\frac{1}{p_t}]$. This group is obviously isomorphic to the rank-one abelian group $\LL(p_1,p_2,\ldots,p_t)$ on generators $\vx_1,\vx_2,\ldots,\vx_t$ with relations $p_1\vx_1=p_2\vx_2=\ldots,p_t\vx_t$ showing up in the theory of weighted projective lines~\cite{Geigle:Lenzing:1987}. These groups usually have torsion; thus a universal fraction $\frac{1}{p}$ should not be confused with the rational number given by the same expression.
\end{example}

\subsection{The localization sequence}
Following the approach in Section~\ref{sect:DivisorSequence}  the exact localization K-theory sequence, due to Quillen~\cite{Quillen:1973}, transforms the `exact sequence' $0\to \cohnull{\XX}\to \coh{\XX}\to \mmod{\funct{X}}\to 0$ of abelian categories to an exact sequence
\begin{equation}\label{eqn:LocalizationSequence}
  1\to \funct{\XX}^*/k^* \up{\divi}\Knull{\cohnull\XX}\up{\nat}\Knull{\coh{\XX}}\up{\rk{}}\ZZ\to 0,
\end{equation}
where $\divi$ sends a fraction $f=s^{-1}u$ to $[\coker{u}]_0-[\coker{s}]_0$, and $\nat$ sends the class $[U]_0$ from $\Knull{\cohnull{\XX}}$ to the class $[U]$ in $\Knull{\coh{\XX}}$. We are now going to relate this sequence to the divisor theory for $\XX$, yielding a fourth incarnation of the Picard group $\Pic{\XX}$. We recall that $\Knullprime{\XX}$ is the kernel of the rank homomorphism $\Knull{\XX}\to \ZZ$, and also the subgroup of $\Knull{\XX}$ generated by the classes of simple sheaves. Further let $V$ (resp.\ $V_0$) be the subgroup of $\Knullprime{\XX}$ (resp.\ $\Knull{\cohnull{\XX}}$) generated by all elements $[\tau^S_x]-[S_x]$ (resp.\  $[\tau^jS_x]_0-[S_x]_0$), $j=1,\ldots,w(x)$, where $x$ runs through all weighted points of $\XX$ and $S_x$ is a simple sheaf concentrated in $x$.

\begin{proposition}
The following diagram is commutative with exact rows and columns
\begin{equation}\label{eqn:DivisorVersusLocalization}
  \xymatrix@R12pt@C18pt{
         &                                 &  0\ar[d]          &0\ar[d] \\
       &                                   &V_0\ar[d]\ar@{=}[r]&V \ar[d]            \\
1\ar[r]&\funct{\XX}^*/k^*\ar@{=}[d]\ar[r]^{\divi}&\Knull{\cohnull{\XX}}\ar[d]\ar[r]^{\nat}&\Knullprime{\XX}\ar[d]\ar[r]&0\\
1\ar[r]&\funct{\XX}^*/k^*\ar[r]^{\divi_\XX}&\Div{\XX}\ar[r]\ar[d]&\Cl{\XX}\ar[d]\ar[r]&0\\
       &                                   &  0          &0 \\
        }
\end{equation}
\end{proposition}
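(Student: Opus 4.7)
The plan is to recognize the middle row as the Quillen localization sequence attached to $0 \to \cohnull{\XX} \to \coh{\XX} \to \mmod{\funct{\XX}} \to 0$ (its exactness being a verbatim adaptation of Theorem~\ref{thm:DivisorSequence}, once one has checked that Proposition~\ref{prop:coker} still applies in $\coh{\XX}$), and the bottom row as the already established weighted divisor sequence~\eqref{eqn:WeightedDivisorSequence}. What remains is to construct the middle vertical map, verify commutativity of the left square, and invoke the snake lemma to extract the right column together with the top-row identification $V_0 = V$.

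For the middle vertical map I would use that $\cohnull{\XX}$ is a length category, so $\Knull{\cohnull{\XX}}$ is freely generated by the isomorphism classes of simple sheaves $\tau^j S_x$ for $x \in X$ and $j \in \ZZ_{w(x)}$. The rule $[\tau^j S_x]_0 \mapsto x$, independent of $j$, therefore extends uniquely to a surjective homomorphism $\delta : \Knull{\cohnull{\XX}} \twoheadrightarrow \Div{\XX}$ whose kernel is, by inspection of generators, exactly $V_0$. Commutativity of the left square reduces to the identity $\delta([\coker{u}]_0) = |\coker{u}|$ for a monomorphism $u : L \to \bar L$ between line bundles: applying the weighted analogue of Proposition~\ref{prop:coker} one writes $\coker{u} = \bigoplus_x U_x$ with $U_x$ either zero or indecomposable of length $\ell(U_x)$ concentrated at $x$; the composition factors of $U_x$ are all simples concentrated at $x$, so $\delta$ sends $[U_x]_0$ to $\ell(U_x)\cdot x$, summing to $|\coker{u}|$, and the same for $\coker{s}$.

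Finally I would obtain the right column by applying the snake lemma to the morphism of short exact sequences formed by the middle and bottom rows, with vertical arrows $\mathrm{id}$, $\delta$ and the induced $\bar\delta : \Knullprime{\XX} \to \Cl{\XX}$. Since $\mathrm{id}$ is bijective and $\delta$ is surjective with kernel $V_0$, the snake sequence collapses to an isomorphism $V_0 \stackrel{\sim}{\to} \ker{\bar\delta}$ (realized, tracing the connecting map, by the restriction of $\nat$) together with $\coker{\bar\delta} = 0$. Because $\nat(V_0) = V$ by definition, this simultaneously yields the top-row identification $V_0 = V$ and the exactness of the right column. The main obstacle is the single soft point: verifying that Proposition~\ref{prop:coker} carries over to $\coh{\XX}$. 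Its proof used only that every indecomposable finite-length sheaf has a simple top and that $\dim_k\Hom{L}{S} \leq 1$ for a line bundle $L$ and a simple sheaf $S$, both of which persist in $\coh{\XX}$ because the finite-length indecomposables remain uniserial along the (possibly enlarged) $\tau$-tubes attached at the weighted points.
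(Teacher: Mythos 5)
Your proposal is correct, and its overall architecture coincides with the paper's: both treat the middle row as the (already established) Quillen localization sequence \eqref{eqn:LocalizationSequence}, the bottom row as the weighted divisor sequence \eqref{eqn:WeightedDivisorSequence}, take the evident vertical map $[\tau^jS_x]_0\mapsto x$, and reduce everything to a commutativity check. The one genuine divergence is how the identification $V_0\cong V$ (equivalently, exactness of the right column) is obtained. The paper gets it in one line from Proposition~\ref{prop:RClass}: the simples $\tau^jS_x$ at the weighted points are exceptional and form exceptional sequences, so their classes are linearly independent in $\Knull{\XX}$, whence $\nat$ restricted to $V_0$ is injective with image $V$. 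You instead apply the snake lemma to the morphism of short exact sequences $(\mathrm{id},\delta,\bar\delta)$; since the left vertical map is bijective and $\delta$ is surjective with kernel $V_0$, you obtain for free that $\nat|_{V_0}:V_0\to\ker\bar\delta$ is an isomorphism. Your route is more self-contained at that point --- it does not invoke the exceptional-sequence/tube-insertion machinery behind Proposition~\ref{prop:RClass} --- at the price of having to certify exactness of both rows beforehand, which you do (your verification that Proposition~\ref{prop:coker} survives in $\coh{\XX}$, resting on uniseriality and $\dim_k\Hom{L}{S}\le 1$, is exactly the right soft point to isolate, and is sound because for a line bundle $L$ only one simple in each tube admits a nonzero map from $L$). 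One terminological quibble: the map $V_0\to\ker\bar\delta$ is the plain restriction of $\nat$, not the connecting homomorphism of the snake; the connecting map $\ker\bar\delta\to\coker(\mathrm{id})=0$ is what gives its surjectivity. This does not affect correctness.
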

\begin{proof}
The commutativity of the diagram is easily checked. Moreover, by Proposition~\ref{prop:RClass}, the induced map from $V_0$ to $V$ is an isomorphism.
\end{proof}

\begin{corollary}
We have a direct decomposition $\Knull{\XX}=\Knullprime{\XX}\oplus \ZZ[\OXX]$. Moreover, there is an exact sequence $0\to U \to \Knullprime{\XX}\to\Cl{\XX}\to 0$, representing $\Cl{\XX}$, hence $\Pic{\XX}$, as a factor group $\Knull{\coh{\XX}}/(U\oplus \ZZ[\OXX])$ of the Grothendieck group $\Knull{\XX}$.~\qed
\end{corollary}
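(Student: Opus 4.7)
The plan is first to split $\Knull{\XX}$ by the rank. The localization sequence \eqref{eqn:LocalizationSequence} gives an exact sequence
\[
\Knull{\cohnull{\XX}}\up{\nat}\Knull{\XX}\up{\rk{}}\ZZ\to 0.
\]
Since $\OXX$ is a line bundle on $\XX$ with $\rk{[\OXX]}=1$, the assignment $n\mapsto n[\OXX]$ is a splitting of $\rk{}$, and the kernel of $\rk{}$ coincides with the image of $\nat$, which is generated by the classes of simple sheaves and therefore equals $\Knullprime{\XX}$ by definition. This yields the direct decomposition $\Knull{\XX}=\Knullprime{\XX}\oplus\ZZ[\OXX]$.

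For the second assertion I would read off the right-hand column of the commutative diagram \eqref{eqn:DivisorVersusLocalization} of the preceding Proposition (writing $V$ for the subgroup called $U$ in the statement). The relevant structural data are already in place: both horizontal rows are exact and share the same map out of $\funct{\XX}^*/k^*$, the vertical map $\Knull{\cohnull{\XX}}\to\Div{\XX}$ is surjective with kernel $V_0$, and $\nat$ sends $V_0$ isomorphically onto $V$ by Proposition~\ref{prop:RClass}. A short diagram chase then shows that $\Knullprime{\XX}\to\Cl{\XX}$ is surjective with kernel exactly $V$: given $c\in\Knullprime{\XX}$ mapping to $0$ in $\Cl{\XX}$, lift $c$ under $\nat$ to some $b\in\Knull{\cohnull{\XX}}$; its image in $\Div{\XX}$ lies in the image of $\funct{\XX}^*/k^*$, so after subtracting from $b$ the corresponding lift coming from the top row we land in $V_0$, whose image under $\nat$ is $V$. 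This delivers the short exact sequence
\[
0\to V\to\Knullprime{\XX}\to\Cl{\XX}\to 0.
\]

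Combining the two parts, I would conclude
\[
\Cl{\XX}\iso\Knullprime{\XX}/V\iso\Knull{\XX}/(V\oplus\ZZ[\OXX]),
\]
and transport this identification through the isomorphism $\Cl{\XX}\iso\Pic{\XX}$ of Proposition~\ref{prop:ActionDivisorGroup} to represent $\Pic{\XX}$ as the same quotient of $\Knull{\XX}$. No step presents a real obstacle; the two points needing mild care are the bijection $V_0\iso V$ under $\nat$, which rests on Proposition~\ref{prop:RClass} (so that the simple-sheaf relations one imposes in $\Knull{\cohnull{\XX}}$ match those in $\Knull{\XX}$), and the observation that $\OXX$, being a line bundle on $\XX$, has K-theoretic rank $1$ and therefore furnishes the splitting of $\rk{}$.
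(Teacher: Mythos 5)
Your proposal is correct and follows exactly the route the paper intends: the corollary carries a \qed because it is meant to be read off directly from the preceding proposition, namely the splitting of $\rk$ by $n\mapsto n[\OXX]$ together with the exactness of the right-hand column of diagram \eqref{eqn:DivisorVersusLocalization}, and your diagram chase (using $V_0\iso V$ via Proposition~\ref{prop:RClass} and the two exact rows) is precisely the verification of that column's exactness. The only discrepancy is notational: the subgroup called $U$ in the corollary is the $V$ of the proposition, as you correctly note.
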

We thus obtain a fourth incarnation of the Picard group as factor group of the Grothendieck group of $\coh{\XX}$. As the weighted projective lines show, it is---unlike in the non-weighted case--- usually not possible to represent the Picard group as a subgroup of the Grothendieck group.

\appendix
\section{Multiplicative structure}\label{sect:Appendix}
Let $\Hh$ be the category of coherent sheaves on a weighted projective curve $\XX=X\begin{pmatrix}p_1,p_2,\ldots,p_t\\x_1,x_2,\ldots,x_t\end{pmatrix}$ obtained by iterated $p$-cycle construction from the category $\coh{X}$ of coherent sheaves on the smooth projective curve $X$. We sketch in this appendix how to introduce a $\ZZ$-graded sheaf theory such that the attached category of coherent sheaves is equivalent to $\Hh$ and, moreover, equips $\Hh$ with a commutative tensor product satisfying $\OXX(x)\otimes\OXX(y)=\OXX(x+y)$, functorially. In particular, this turns the isomorphism classes of line bundles on $\XX$ into a commutative group, the \emph{Picard group} $\Pic{\XX}$ of $\XX$. Moreover, the graded sheaf theory allows to form the \emph{determinant homomorphism} $\det:\Knull{\coh{\XX}}\to \Pic{\XX}$, sending the class $[E]$ of a vector bundle $E$ of rank $r$ to its $p$-th exterior power $\bigwedge^pE$.

The procedure follows \cite{Reiten:VandenBergh:2002}, which the reader should consult for details. Let $x_0$ be an ordinary point. We form the divisor $D=mx_0+\sum_{i=1}^{t}x_t$. Functoriality of the shift functor $X\mapsto X(D)$ turns the direct sum
\begin{equation*}
   R=\bigoplus_{n\in\ZZ}\Hom{\OXX}{\OXX(nD)}
 \end{equation*}
into a $\ZZ$-graded algebra with finite dimensional components $R_n=\Hom{\OXX}{\OXX(nD)}$. The components $R_n$ can be naturally seen as subspaces of the function field $\funct{\XX}=\funct{X}$, identifying $R$ by means of the map $\sum_{n\in\ZZ}{r_n}\mapsto \sum_{n\in\ZZ}{r_nY^n}$ with a subalgebra of the polynomial algebra $\funct{X}[Y]$ in one variable over $\funct{X}$. Taking noetherianness of $\Hh$ into account, this shows that $R$ is a positively $\ZZ$-graded affine $k$-algebra, provided we choose $m$ sufficiently large.
\begin{proposition}
The following properties hold:
\begin{enumerate}
  \item For each $A$ in $\Hh$, the direct sum $F(A)=\bigoplus_{n\geq0}\Hom{\OXX(-nD)}{A}$ is a finitely generated positively graded $R$-module.
  \item The attachment $A\mapsto F(A)$ induces an equivalence between $\Hh$ and the Serre quotient $\Cc$ of  the category $\modgr{\ZZ_{\geq0}}{R}$ of finitely generated positively $\ZZ$-graded modules modulo its Serre subcategory of graded modules of finite length.
  \item The graded tensor product $\otimes$ (resp.\ graded $r$-th exterior power $\bigwedge^r$) induce corresponding operations on $\Cc$ and therefore $\Hh$, having the usual properties.
  \item The set of $\Pic{\XX}$ of isomorphism classes of line bundles is a group with respect to the tensor product operation.
\end{enumerate}
\end{proposition}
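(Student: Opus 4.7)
The plan is to follow the classical Serre strategy, adapted to the hereditary weighted setting: once $m$ is chosen sufficiently large, the divisor $D=mx_0+\sum_{i=1}^{t}x_i$ behaves like an ample class, and the graded algebra $R$ together with the functor $F$ realizes a Serre-type correspondence. The essential preliminary is to establish an \emph{ampleness} statement for $D$: for every $A\in\Hh$ there exists $n_0=n_0(A)$ such that, for all $n\geq n_0$, the sheaf $A(nD)$ is generated by global sections and satisfies $\Ext{1}{\OXX}{A(nD)}=0$. Both assertions follow from the weighted Riemann--Roch formula: the average Euler product $\aveuform{\OXX}{A(nD)}$ grows linearly in $n$ with positive slope proportional to $\rk{A}$ and $\dg{D}$, while Serre duality converts the Ext-vanishing into a Hom-vanishing involving $\tau\OXX$, which holds once $n$ is large enough.

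For part (1), fix $A$ and choose $n_0$ as above. Global sections $f_1,\ldots,f_s\in F(A)_{n_0}$ generating $A$ give rise to an epimorphism $\OXX(-n_0D)^{s}\twoheadrightarrow A$, whence a surjection $R(-n_0)^{s}\twoheadrightarrow\bigoplus_{n\geq n_0}F(A)_n$ of graded $R$-modules in all sufficiently high degrees. Since $\Hh$ is Hom-finite and only finitely many degrees below $n_0$ occur, finite generation of $F(A)$ follows.

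For part (2), $F$ is fully faithful by construction, because $\Hom{\OXX(-nD)}{A}=F(A)_n$ and the $R$-action matches composition. Essential surjectivity onto $\Cc$ uses an exact left adjoint $\wtilde{(-)}$: given a finite presentation $\bigoplus R(-n_i)\to\bigoplus R(-m_j)\to M\to 0$, apply $\wtilde{(-)}$ sending $R(-n)\mapsto\OXX(-nD)$ and form the cokernel in $\Hh$; one checks that $F\wtilde{M}$ and $M$ agree in all sufficiently high degrees, which is exactly equality in $\Cc$. For parts (3) and (4), the graded tensor product $M\otimes_R N$ and the graded exterior powers $\bigwedge^{r}M$ preserve positive grading and finite generation and absorb finite-length modules on either side; transporting them through the equivalence endows $\Hh$ with a commutative tensor product and exterior powers, with $\OXX$ as unit and $\OXX(D)\otimes\OXX(D')=\OXX(D+D')$ by construction. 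A line bundle corresponds to a graded $R$-module whose localization at the generic point is a one-dimensional $\funct{X}$-vector space; the tensor of two such is again of this form, and the graded $R$-dual supplies an inverse, making $\Pic{\XX}$ into an abelian group.

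The main obstacle is the ampleness property. Unlike the classical projective setting, there is no ready-made machinery of line bundles on a projective scheme to invoke; ampleness has to be extracted from the abstract properties of $\Hh$ --- noetherianness, Serre duality, and the quantitative control afforded by the weighted Riemann--Roch formula. Once ampleness is in hand, the verifications of finite generation, of the Serre-type equivalence with $\Cc$, and of the well-definedness of the tensor product on $\Hh$ are routine, and the detailed bookkeeping is provided in \cite{Reiten:VandenBergh:2002}.
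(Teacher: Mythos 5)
Your overall route coincides with the paper's: both ultimately defer the Serre-type correspondence to \cite{Reiten:VandenBergh:2002} (and \cite{Geigle:Lenzing:1986} for the tensor product), with the only substantive input being an ampleness property of the divisor $D=mx_0+\sum_i x_i$. The difference is in how that ampleness is formulated, and here your version contains a genuine error that is precisely the point the paper's one-line remark is guarding against. You assert that for every $A$ there is an $n_0$ such that $A(nD)$ is generated by global sections for \emph{all} $n\geq n_0$. This fails at the weighted points: if $S$ is a simple sheaf concentrated in $x_i$ with $w(x_i)=p_i>1$, the shift by $D$ permutes the $\tau$-orbit $(\tau^jS_{x_i})_{j\in\ZZ_{p_i}}$ cyclically, so $\Hom{\OXX(-nD)}{S}=\Hom{\OXX}{S(nD)}$ is non-zero only for $n$ in a single residue class modulo $p_i$, never for all large $n$. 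Your Riemann--Roch justification has the same flaw: positivity of the averaged form $\aveuform{\OXX}{A(nD)}$ only tells you that $\euform{\tau^j\OXX}{A(nD)}>0$ for \emph{some} $j$ among the $\bar p$ twists, not that $\Hom{\OXX}{A(nD)}\neq0$. The correct statement --- and the one the paper isolates as ``the key point'' --- is that for every simple sheaf $S$ the set of $n>0$ with $\Hom{\OXX(-nD)}{S}\neq0$ is \emph{infinite} (though not cofinite), which is exactly what the Reiten--Van den Bergh ampleness criterion requires: for every $A$ and every bound $n_0$ one finds a surjection $\bigoplus_{j}\OXX(-n_jD)\twoheadrightarrow A$ with possibly \emph{distinct} twists $n_j\geq n_0$, rather than a single uniform twist.

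Once the ampleness statement is corrected in this way, the rest of your outline (finite generation of $F(A)$, full faithfulness up to finite-length modules, the exact left adjoint giving essential surjectivity onto $\Cc$, and the transport of $\otimes$ and $\bigwedge^r$ yielding the group structure on $\Pic{\XX}$) is the standard argument carried out in the cited references, and matches what the paper intends.
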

\begin{proof}
Concerning the first two statements, we refer to \cite{Reiten:VandenBergh:2002}. We just remark that the key point is that, by the choice of the divisor $D$, we obtain for any simple sheaf $S$ an \emph{infinite} set of positive integers $n$ such that $\Hom{\OXX(-nD)}{S}$ is non-zero.

Concerning the third item, the proofs from \cite{Geigle:Lenzing:1986} can be copied.
\end{proof}
\begin{corollary}
Each line bundle $L$ on $\XX$ is determined, up to isomorphism, by its class $[L]$ in the Grothendieck group.
\end{corollary}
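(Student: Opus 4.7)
The strategy is to mimic the Borel--Serre argument from Proposition~\ref{prop:LineBundleClass}: construct a determinant homomorphism $\det : \Knull{\coh{\XX}} \to \Pic{\XX}$ satisfying $\det([L]) = L$ for every line bundle $L$. Once such a $\det$ is in hand, the assertion is immediate, since $[L]=[L']$ then forces $L \cong \det([L]) = \det([L']) \cong L'$ inside the group $\Pic{\XX}$.

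The preceding proposition supplies exactly the ingredients needed: a graded tensor product $\otimes$ on $\coh{\XX}$, the graded exterior powers $\bigwedge^r$, and the abelian group structure on $\Pic{\XX}$. For a vector bundle $E$ of rank $r$, I would set
\begin{equation*}
\det(E) \;:=\; \bigwedge^{r} E,
\end{equation*}
which is a line bundle by the standard rank count (one checks rank $1$ in the Serre quotient $\Hh/\Hhnull = \mmod{\funct{\XX}}$ and invokes item~(3) of the proposition). The key technical step is then to verify, for every short exact sequence $0 \to E' \to E \to E'' \to 0$ of vector bundles of ranks $r'$, $r$, $r''$, the functorial isomorphism
\begin{equation*}
\det(E) \;\cong\; \det(E') \otimes \det(E'').
\end{equation*}
This is the classical multilinear-algebra identity: $\bigwedge^{r} E$ carries a natural filtration whose successive quotients are $\bigwedge^{i} E' \otimes \bigwedge^{r-i} E''$, and only the top factor $\bigwedge^{r'} E' \otimes \bigwedge^{r''} E''$ is itself a line bundle. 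The natural map from this top factor into $\det(E)$ is an isomorphism generically (in $\mmod{\funct\XX}$), and hereditarity of $\coh{\XX}$ promotes the generic isomorphism to a global one.

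Once multiplicativity is secured on $\Knull{\vect{\XX}}$, I would extend $\det$ to all of $\Knull{\coh{\XX}}$ via Theorem~\ref{thm:CohX}(3): any coherent sheaf splits as $E \oplus U$ with $U$ of finite length, and any such $U$ admits a resolution $0 \to L_1 \to L_0 \to U \to 0$ by line bundles (choose $L_0$ mapping onto $U$ using transitivity of the $\Div\XX$-action from Proposition~\ref{prop:ActionDivisorGroup}; the kernel $L_1$ has rank one by hereditarity). Setting $\det(U) := L_0 \otimes L_1^{-1}$, independence of the resolution follows from the already established multiplicativity via a Schanuel-type comparison. The main obstacle is precisely the multiplicativity of $\det$ on short exact sequences: although the identity is classical over schemes, in the graded sheaf-theoretic framework of the proposition it must be re-derived from the abstract functorial properties of $\otimes$ and $\bigwedge^{\bullet}$, with hereditarity playing an indispensable role in upgrading the generic identification to a global isomorphism.
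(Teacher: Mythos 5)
Your proposal takes essentially the same route as the paper, whose entire proof is the remark that, thanks to the multiplicative structure of the appendix, ``the classical proof of Proposition~\ref{prop:LineBundleClass} turns over'' --- that is, one forms the determinant homomorphism $\det:\Knull{\coh{\XX}}\to\Pic{\XX}$, $[E]\mapsto\bigwedge^{\rk{E}}E$, exactly as you do. One small slip in your extension step: not every finite length sheaf $U$ admits a resolution $0\to L_1\to L_0\to U\to 0$ by line bundles (by Proposition~\ref{prop:coker} the cokernel of a map of line bundles has at most one indecomposable summand per point, so $S_x\oplus S_x$ is a counterexample); this is harmless, since $\det$ only needs to be defined on classes in $\Knull{\coh{\XX}}$, which are generated by classes of line bundles and of simple sheaves, and every simple sheaf is the cokernel of a map of line bundles --- or, equivalently, one resolves $U$ by vector bundles of higher rank and uses the multiplicativity already established on $\vect{\XX}$.
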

\begin{proof}
Due to the multiplicative structure on $\coh{\XX}$, the classical proof of Proposition~\ref{prop:LineBundleClass} turns over.
\end{proof}
Unlike exceptional objects, line bundles are not determined by their class in the reduced Grothendieck group $\rKnull{\XX}$.

\bibliographystyle{amsplain}

\providecommand{\bysame}{\leavevmode\hbox to3em{\hrulefill}\thinspace}
\providecommand{\MR}{\relax\ifhmode\unskip\space\fi MR }
\providecommand{\MRhref}[2]{%
  \href{http://www.ams.org/mathscinet-getitem?mr=#1}{#2}
}
\providecommand{\href}[2]{#2}

\end{document}